\documentclass{article}
\usepackage{amsmath}
\usepackage{amsthm}
\usepackage{amssymb}
\usepackage{bm}
\usepackage[all]{xy}
\usepackage[dvips]{graphicx}
\usepackage{latexsym}
\usepackage{mathtools}
\usepackage{geometry}
 \geometry{left=30mm,right=30mm,top=30mm,bottom=30mm}

 \renewcommand{\equation}

\newtheorem{prop}{Proposition}[section]
\newtheorem{lem}{Lemma}[section]
\newtheorem{thm}{Theorem}[section]

\newtheorem{claim}{Claim}[section]

\theoremstyle{definition}
\newtheorem{rmk}{Remark}[section]
\newtheorem{defini}{Definition}[section]

\title{Generalized torsion elements in the fundamental groups of 3-manifolds obtained by the $0$-surgeries along some double twist knots}
\author{Nozomu Sekino}
\date{}
\begin{document}
\maketitle

\begin{abstract}
We consider the 3-manifold obtained by the $0$-surgery along a double twist knot. 
We construct a candidate for a generalized torsion element in the fundamental group of the surged manifold, and see that there exists the cases where the candidate is actually a generalized torsion element. 
For a proof, we use the JSJ-decomposition of the surged manifold. 
We also prove that the fundamental group of the 3-manifold obtained from the $0$-surgery along a double twist knot is bi-orderable if and only if it admits no generalized torsion elements. 
We also list some examples of the surged manifolds whose fundamental groups admit generalized torsion elements. 
\end{abstract}

\section{Introduction}\label{secintro}
A non-trivial element of a group is called a generalized torsion element if some products of conjugates of it is the identity element. 
The existence of a generalized torsion element prevents the group from being bi-orderable. 
Thus for a group, admitting no generalized torsion elements is a necessary condition for the group being bi-orderable. 
In general, there exists a group which is not bi-orderable and has no generalized torsion elements. See Chapter 4 of \cite{murarhemtulla} and \cite{akhmedovthorne}, for example. 

However, in \cite{motegiteragaito}, it is conjectured that if $F$ is a 3-manifold group i.e the fundamental group of a compact 3-manifold, then non-existence of generalized torsion elements on $F$ implies the bi-orderability of $F$. 
There are many works toward this conjecture \cite{motegiteragaito}, \cite{itomotegiteragaito} \cite{sekino}. 
For example, this conjecture is verified for the fundamental groups of non-hyperbolic geometric 3-manifolds \cite{motegiteragaito}, for those of  3-manifolds obtained by some Dehn surgeries along some knots \cite{itomotegiteragaito} and for those of once punctured torus bundles \cite{sekino}. 

According to this conjecture, sometimes a question whether a given (3-manifold) group admits a generalized torsion element arises. 
In general it is difficult to find a generalized torsion element. 
There are also many works for this question, for some knot groups and link groups \cite{perronrolfsen} \cite{clayrolfsen} \cite{claydesmaraisnaylor} \cite{teragaito1} \cite{teragaito2} \cite{motegiteragaito2}, for the fundamental groups of the 3-manifolds obtained by some Dehn surgeries along some knots \cite{itomotegiteragaito}.  

In this paper, we focus on the fundamental groups of the 3-manifolds obtained by the $0$-surgeries along double twist knots. 
The {\it double twist knot} {\it of type} $(p,q)$, denoted by $K_{p,q}$ in this paper, is the knot represented as $C[2p,2q]$ in the Conway notation. 
For example, $K_{1,1}$ is the figure eight knot and $K_{1,-1}$ is the right-handed trefoil. 
Note that the bridge number of each double twist knot is two. 
%By using the result in \cite{linnellrhemtullarolfsen}, we can see that if a two-bridge knot has an Alexander polynomial whose roots are all real positive, then its knot group is bi-orderable. 
By using a part of the result of \cite{claydesmaraisnaylor}, 
we see that the knot group i.e the fundamental group of the complement of the knot of $K_{p,q}$ is not bi-orderable if $pq$ is negative, and that of $K_{1,q}$ is bi-orderable if $q$ is positive. 
A generalized torsion element of the knot group of $K_{1,q}$ for negative $q$ is constructed \cite{teragaito1}. 
To the author's knowledge, it is unknown whether the knot group of $K_{p,q}$ with positive $pq$ is bi-orderable and whether the knot group of $K_{p,q}$ with negative $pq$ other than that of $K_{1,q}$ admits a generalized torsion element. 

About Dehn surgeries along a double twist knot $K_{p,q}$, the generalized torsion element of the fundamental group $\pi_{1}(K_{p,q}(r))$ of the 3-manifold $K_{p,q}(r)$ obtained by the $r$-surgery along $K_{p,q}$ is constructed in \cite{itomotegiteragaito} with condition on $r$ in terms of $p$ and $q$. 
Since it is known that a finitely generated bi-orderable group must surject onto $\mathbb{Z}$, the only possible surgery along a knot in the 3-sphere producing a 3-manifold whose fundamental group may be bi-orderable is the $0$-surgery. 
About the  $0$-surgeries, it is known that $\pi_{1}(K_{1,1}(0))$ is bi-orderable and $\pi_{1}(K_{1,-1}(0))$ admits a generalized torsion element. 
In this paper, we construct a candidate for a generalized torsion element in other $\pi_{1}(K_{p,q}(0))$, and we see that there exists some $(p,q)$ such that this candidate is actually a generalized torsion element:  
\begin{thm}\label{mainthm}
Let $(p,q)$ and $(p',q')$ be two pairs of non-zero integers such that $pq=p'q'$ and $p\neq \pm p', \pm q'$. 
Then at least one of $\pi_{1} \left( K_{p,q}(0) \right)$ and $\pi_{1}(K_{p',q'}(0))$ admits a generalized torsion element. 
\end{thm}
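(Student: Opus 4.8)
The plan is to pass, by an elementary arithmetic remark, to whichever of $(p,q)$ and $(p',q')$ has both entries of absolute value at least $2$, and then to exhibit a generalized torsion element in the fundamental group of the corresponding $0$-surgery by analysing its JSJ decomposition; the other pair then plays no role.

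Set $n=pq=p'q'$. From $p\neq\pm p',\pm q'$ we get $\{|p|,|q|\}\neq\{|p'|,|q'|\}$, whereas $|p|\,|q|=|p'|\,|q'|=|n|$; hence at least one of these two unordered pairs, say $\{|p|,|q|\}$, is \emph{not} equal to $\{1,|n|\}$ (if both were, they would coincide). But then $|p|\geq 2$ and $|q|\geq 2$, because $|p|=1$ would force $|q|=|n|$ and thus $\{|p|,|q|\}=\{1,|n|\}$. So Theorem~\ref{mainthm} follows as soon as one shows that $\pi_{1}\!\left(K_{p,q}(0)\right)$ contains a generalized torsion element whenever $|p|,|q|\geq 2$, and this element will be the candidate announced above.

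Assume $|p|,|q|\geq 2$. The double twist knot $K_{p,q}$ has genus one: it bounds a once-punctured torus $F$ obtained by plumbing a band with $p$ full twists onto a band with $q$ full twists. Capping $\partial F$ off with the meridian disc of the surgery solid torus yields a closed torus $\widehat{F}\subset K_{p,q}(0)$, which is incompressible — it is non-separating, as $[\widehat{F}]$ generates $H_{2}\!\left(K_{p,q}(0);\mathbb{Z}\right)\cong\mathbb{Z}$, and $K_{p,q}(0)$ is irreducible — so $K_{p,q}(0)$ is toroidal. I would then determine its JSJ decomposition: cutting along $\widehat{F}$ produces the complement of the Seifert surface, a sutured manifold that is a Murasugi sum of the (sutured) complements of the two twisted bands, and keeping track of how the band framings behave under the $0$-filling one sees that a band with $k$ full twists, $|k|\geq 2$, contributes a Seifert fibered JSJ piece with an exceptional fibre of multiplicity $|k|$ (a cable space). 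Since $|p|,|q|\geq 2$, the JSJ decomposition of $K_{p,q}(0)$ is therefore nontrivial and contains such a Seifert fibered piece $S$, with exceptional fibre of some multiplicity $a\geq 2$. Now the fundamental group of a cable space with exceptional fibre of multiplicity $a$ is $\langle c,u \mid [c,u^{a}]\rangle\cong F_{a}\rtimes_{\phi}\mathbb{Z}$, where $F_{a}$ is free on $c_{i}:=u^{i}cu^{-i}$ for $0\le i\le a-1$ and $\phi$, conjugation by $u$, cyclically permutes the $c_{i}$; putting $g:=c_{0}c_{1}^{-1}$ one has the telescoping identity $g\cdot\phi(g)\cdots\phi^{a-1}(g)=(c_{0}c_{1}^{-1})(c_{1}c_{2}^{-1})\cdots(c_{a-1}c_{0}^{-1})=1$, i.e.\ $g\,(ugu^{-1})\,(u^{2}gu^{-2})\cdots(u^{a-1}gu^{-(a-1)})=1$, so $g$ is a generalized torsion element of $\pi_{1}(S)$. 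Because the JSJ tori of $K_{p,q}(0)$ are incompressible, $\pi_{1}(S)\hookrightarrow\pi_{1}\!\left(K_{p,q}(0)\right)$ is injective, so the image of $g$ is a nontrivial element a product of whose conjugates is trivial; chasing the identifications shows this image is exactly the candidate built earlier, and the proof is complete.

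The arithmetic reduction, the $\pi_{1}$-injective pull-back, and the generalized torsion element inside a cable space are all elementary (note, in particular, that the cable-space mechanism does not care about the sign of $n$). The real obstacle is the geometric heart of the third paragraph: one must show that, for \emph{every} pair with $|p|,|q|\geq 2$, the JSJ decomposition of $K_{p,q}(0)$ genuinely contains a cable-space piece with an exceptional fibre of the asserted multiplicity — this requires a careful analysis of the Murasugi-sum structure of the complement of the genus-one Seifert surface and of the effect of the $0$-filling on it — and then one must identify the element $g$ obtained this way with the candidate defined earlier in the paper.
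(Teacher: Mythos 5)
Your argument is correct in outline, but it follows a genuinely different route from the paper's, so let me compare them. The paper never exhibits a generalized torsion element in either group: it shows that the candidate $[xy,yx]$ satisfies $1\in\left<\left<[xy,yx]\right>\right>^{+}$ in every $G_{p,q}$ (Lemma~\ref{lemcandidate}), and then argues that if $[xy,yx]$ were trivial in \emph{both} $G_{p,q}$ and $G_{p',q'}$, the two groups would acquire presentations depending only on $pq=p'q'$, hence be isomorphic, hence by Waldhausen the two surgered manifolds would be homeomorphic, contradicting Proposition~\ref{prophomeo0surg}. You instead reduce arithmetically to one pair with $|p|,|q|\ge 2$ (that reduction is correct) and produce an explicit generalized torsion element inside a Seifert piece. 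Your algebra in $\left<c,u\mid [c,u^{a}]\right>$ is right (the Reidemeister--Schreier computation does give a free kernel cyclically permuted by $u$, and the telescoping product of conjugates of $c_{0}c_{1}^{-1}$ is trivial while $c_{0}c_{1}^{-1}\neq 1$ for $a\ge 2$), and $\pi_1$-injectivity of the pieces is standard. The geometric input you flag as the ``real obstacle''--- that for $|p|,|q|\ge 2$ the complement of the capped-off Seifert surface splits along one further incompressible torus into $M(p)$ and $M(-q)$ with exceptional fibres of multiplicities $|p|$ and $|q|$ --- is exactly Proposition~\ref{propjsj} via Lemma~\ref{lemhomeoxpq}; your Murasugi-sum sketch is not a proof of this, but it can simply be replaced by a citation. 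Two caveats. First, your closing claim that $g=c_{0}c_{1}^{-1}$ ``is exactly the candidate'' is unjustified and almost certainly false: the paper's candidate $[xy,yx]$ corresponds to $[b,t^{-1}bt]$, which involves the meridian $t$ and does not lie in the complement of the Seifert surface; the theorem does not need this identification, so delete it. Second, and more interestingly, your route proves strictly more than the theorem, namely that $\pi_{1}(K_{p,q}(0))$ itself admits a generalized torsion element whenever $|p|,|q|\ge 2$; indeed you need not even descend to the second decomposition level, since $\Gamma_{p,q}=\left<a,b\mid [b^{q},a^{p}]\right>$ injects into $G_{p,q}$ (because $T$ is incompressible), the third item of Lemma~\ref{lemitomotegiteragaito} gives $1=[b^{q},a^{p}]\in\left<\left<[b,a]\right>\right>^{+}$ there (after replacing $a$ or $b$ by its inverse if $p$ or $q$ is negative), and $[b,a]\neq 1$ in $\Gamma_{p,q}$ unless $|p|=|q|=1$. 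Since this would dispose of many of the cases the paper leaves to the computer search of the last section, you should scrutinize that consequence carefully, but I do not see a flaw in it.
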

Note that only by Theorem~\ref{mainthm} we cannot know whether $\pi_{1}(K_{p,q}(0))$ admits a generalized torsion element for a given $(p,q)$. 
We also give computation which judges whether the candidate is a generalized torsion element for some $(p,q)$ in the last section. 
Moreover, we see that there is a generalized torsion element in $\pi_{1}\left(K_{p,q}(0)\right)$ for every double twist knot $K_{p,q}$ with negative $pq$: 
\begin{thm}\label{mainthm2}
Let $(p,q)$ be a pair of non-zero integers such that $pq<0$. 
Then there is a generalized torsion element in $\pi_{1}\left(K_{p,q}(0)\right)$.
\end{thm}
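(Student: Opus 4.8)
\emph{Sketch.} The plan is to exhibit inside $K_{p,q}(0)$ a piece of the JSJ decomposition whose fundamental group obviously carries a generalized torsion element, and then to push this element into the ambient group. The last step is formal: the JSJ tori are incompressible, so the fundamental group of each piece injects into $\pi_{1}(K_{p,q}(0))$, and an injective homomorphism sends a generalized torsion element to a generalized torsion element (a product of conjugates equal to the identity maps to such a product, and the image stays non-trivial).

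First I would normalize via the symmetries $K_{p,q}\cong K_{q,p}$ and $K_{-p,-q}\cong\overline{K_{p,q}}$ (mirror image), both of which preserve $\pi_{1}(K_{\bullet}(0))$ up to isomorphism, so that $p\ge 1>0>q$. Since $K_{p,q}$ has genus one and a minimal genus Seifert surface $F$ of a non-trivial knot caps off in the $0$-surgery to an incompressible torus $\hat{F}$, the manifold $K_{p,q}(0)$ is irreducible and toroidal; hence, unless it is a torus bundle or a closed Seifert fibered space, it has a non-trivial JSJ decomposition. Now $K_{p,q}(0)$ is a torus bundle exactly when $K_{p,q}$ is fibered, i.e.\ for the trefoil and the figure-eight (the only genus-one fibered knots, as the Alexander polynomial $pq\,t^{2}+(1-2pq)t+pq$ is monic only when $pq=\pm1$); of these the figure-eight has $pq>0$, while the trefoil $K_{1,-1}$ has $K_{1,-1}(0)$ a closed (Nil) manifold, for which the desired conclusion is already recorded in Section~\ref{secintro}. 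So I dispose of that case at once and assume $\{|p|,|q|\}\ne\{1\}$ and that the JSJ decomposition of $K_{p,q}(0)$ is non-trivial.

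Next, cutting $K_{p,q}(0)$ along $\hat{F}$ yields the complementary manifold $W$ of the Seifert surface --- the exterior of $F$ in $S^{3}$ with the longitudinal annulus filled in --- a compact manifold with two torus boundary components, and $\pi_{1}(K_{p,q}(0))$ is the HNN extension of $\pi_{1}(W)$ along $\pi_{1}(\hat{F})\cong\mathbb{Z}^{2}$ with stable letter dual to $\hat{F}$. Using that $F$ is a plumbing of two twisted bands whose numbers of twists are governed by $p$ and $q$, I would compute the JSJ pieces of $W$, hence of $K_{p,q}(0)$, and show that when $pq<0$ one such piece $V$ is Seifert fibered over a non-orientable base surface (in the extreme case, the twisted $I$-bundle over the Klein bottle). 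In an orientable Seifert fibered space over a non-orientable base the regular fiber $h$ is conjugate to $h^{-1}$: conjugating by the class $c$ of an orientation-reversing loop in the base forces $chc^{-1}=h^{-1}$, since the total space is oriented. Thus $h\cdot(chc^{-1})=1$, and as $h\ne 1$ this makes $h$ a generalized torsion element of $\pi_{1}(V)$, hence of $\pi_{1}(K_{p,q}(0))$ by the first paragraph. In the write-up I would also match the paper's candidate element with $h$ up to conjugacy and check it is non-trivial; and I would keep in reserve the variant in which the generalized torsion element lies in $\pi_{1}(\hat{F})$ and becomes conjugate to its inverse through an orientation-reversing identification in the HNN/graph-of-groups structure, the operative principle --- that an element conjugate to its own inverse is a generalized torsion element --- being the same.

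The main obstacle is the JSJ computation itself: determining the complementary manifold $W$ of the plumbed Seifert surface precisely and uniformly in $(p,q)$ with $pq<0$, locating the Seifert piece over a non-orientable base, and pinning the candidate element down inside it. The sign condition $pq<0$ should be exactly what makes that base non-orientable; with $pq>0$ one expects the analogous piece to fiber over an orientable base with bi-orderable fundamental group, which is consistent with Theorem~\ref{mainthm} producing a generalized torsion element in only \emph{one} member of a pair. A related delicate point is the bookkeeping of orientations and framings that ties the sign of $pq$ to the (non-)orientability of that base.
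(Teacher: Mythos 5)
There is a genuine gap, and it is located exactly where you flagged the ``main obstacle'': the claim that for $pq<0$ some JSJ piece of $K_{p,q}(0)$ is Seifert fibered over a non-orientable base is false. The paper computes the JSJ decomposition explicitly (Proposition~\ref{propjsj}): after normalizing, the pieces are the capped-off Seifert surface exterior $X_{p,q}\cong \overline{K_{p,q}(0)\setminus T}$, which splits along one further torus into $M(p)$ and $M(-q)$ (or is a single $M(pq)$ when $p$ or $q$ is $\pm1$), where each $M(k)$ is Seifert fibered over an \emph{annulus} with one exceptional point --- an orientable base --- for every sign of $k$. So there is no regular fiber conjugate to its inverse, and your primary mechanism has nothing to act on. Your reserve variant also fails: the gluing along the capped-off Seifert surface $T$ is encoded by the homological monodromy matrix $A$, which has determinant $1$, so no element of $\pi_1(T)\cong\mathbb{Z}^2$ is carried to its inverse by the HNN stable letter. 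The sign of $pq$ does not show up as non-orientability of a base surface; it shows up in the roots of the Alexander polynomial $-pqt^2+(2pq+1)t-pq$, which for $pq<0$ are non-real of modulus one (elliptic monodromy on $H_1(T)$) and for $pq>0$ are real and positive.

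The paper's actual argument is algebraic and uses that ellipticity. From the Lin presentation of $G_{p,-q}$ ($p,q>0$), the capped-off longitude relation $[b^{-q},a^{p}]=1$ together with Lemma~\ref{lemitomotegiteragaito}(3) gives $1\in\left<\left<[b^{-1},a]\right>\right>^{+}$, so $[b^{-1},a]$ is a generalized torsion element unless $a$ and $b$ commute. In the commuting case the subgroup generated by $a,b$ becomes a $\mathbb{Z}^2$-like module on which $t$ acts by a matrix $A$ with eigenvalues on the unit circle having rational real part; a Chebyshev-polynomial argument (Lemma~\ref{matrix}) then produces positive integers $k,n,m$ with $(t^{k}a^{(pq)^{k}}t^{-k})^{n}(a^{(pq)^{k}})^{m}(t^{-k}a^{(pq)^{k}}t^{k})^{n}=1$, exhibiting $a$ itself as a generalized torsion element (non-triviality of $a$ being forced by the $\mathbb{Z}^2$ subgroup coming from the incompressible torus). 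Your opening observation --- that a generalized torsion element of a $\pi_1$-injective piece survives in the ambient group --- is correct and could in principle be combined with the element $[b^{-1},a]$, which already lives in $\Gamma_{p,-q}=\left<a,b\mid[b^{-q},a^{p}]=1\right>$; but you would then have to prove its non-triviality there, which is the very step the paper's dichotomy is designed to avoid. As written, your proof cannot be completed along the proposed lines.
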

As stated above, the knot group of $K_{p,q}$ with negative $pq$ is not bi-orderable and this implies that there exists a generalized torsion element in the group if the conjecture above is true. 
However, we cannot find a generalized torsion element in the group so far. 
A generalized torsion element constructed at the proof of Theorem~\ref{mainthm2} is obtained through the $0$-surgery. 
Moreover, we have the following, which implies that the conjecture above is true for the fundamental groups of the 3-manifolds obtained by the $0$-surgeries along double twist knots: 
\begin{thm} \label{mainthm3}
Let $(p,q)$ be a pair of non-zero integers. 
Then $\pi_{1}\left( K_{p,q}(0)\right)$ is bi-orderable if and only if it admits no generalized torsion elements. 
\end{thm}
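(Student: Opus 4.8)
The plan is to prove the non-trivial direction: if $\pi_1(K_{p,q}(0))$ admits no generalized torsion element, then it is bi-orderable (the converse is automatic, since a bi-orderable group has no generalized torsion elements). The strategy is to exploit the JSJ-decomposition of the closed 3-manifold $M := K_{p,q}(0)$, which is the same structural tool announced in the abstract and used for Theorems 1.1 and 1.2. First I would recall the standard fact that $M$ fibers over $S^1$ (the $0$-surgery along a knot has $H_1 = \mathbb{Z}$ and in the double twist knot case one can describe the fibration explicitly), so $\pi_1(M)$ is an extension $1 \to \pi_1(F) \to \pi_1(M) \to \mathbb{Z} \to 1$ for a surface $F$. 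A standard criterion (Perron–Rolfsen type) says that such an extension is bi-orderable provided $\pi_1(F)$ is bi-orderable and the monodromy action on $H_1(F;\mathbb{R})$ has all eigenvalues real and positive. So the heart of the argument is an eigenvalue computation for the monodromy, together with the identification of which $(p,q)$ make this condition fail.

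Next I would carry out the case analysis according to the sign of $pq$ and the geometry of the pieces. When $pq < 0$, Theorem 1.2 already supplies a generalized torsion element, so $\pi_1(K_{p,q}(0))$ is never bi-orderable and the biconditional holds vacuously on that side; I would simply invoke Theorem 1.2. When $pq > 0$, I would split further: either $M$ is a small Seulbong-type graph manifold / has a nontrivial JSJ-decomposition, or it is atoroidal. For the atoroidal (hyperbolic) pieces I would use the fibered structure and reduce bi-orderability to the homological monodromy eigenvalue condition above; the key computation is to write down the monodromy matrix on $H_1(F;\mathbb{R})$ as a product of the elementary twist matrices coming from the $2p$ and $2q$ bands in the Conway picture $C[2p,2q]$, and to determine exactly when its eigenvalues are real and positive. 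I expect this to pin down a finite list of "good" $(p,q)$ (surely including $(1,1)$, recovering the known bi-orderability of $\pi_1(K_{1,1}(0))$) and to show that in the complementary "bad" cases the candidate element from the construction in Theorem 1.1's proof is forced to be a genuine generalized torsion element — this is where Theorem 1.1 (or its proof machinery) gets reused to produce the torsion element whenever bi-orderability fails.

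For the pieces with a nontrivial JSJ-decomposition I would use the gluing principle for bi-orderings along tori: if each JSJ piece has bi-orderable fundamental group and the peripheral structures are compatible (the orderings on the edge tori can be chosen to agree), then the whole $\pi_1(M)$ is bi-orderable; conversely, an obstruction in one piece propagates. Seifert-fibered pieces are handled by the classical criterion (a Seifert manifold over an orientable base has bi-orderable $\pi_1$ iff the Euler number vanishes and the base orbifold group is bi-orderable), and the hyperbolic pieces again by the fibered/eigenvalue criterion. Assembling these, I would match each case against the dichotomy "admits a generalized torsion element vs. does not," using Theorem 1.1 and Theorem 1.2 to exhibit the torsion element in precisely the cases where some piece (or some gluing) obstructs bi-orderability, and constructing an explicit bi-ordering otherwise.

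The main obstacle I anticipate is the monodromy eigenvalue analysis in the fibered hyperbolic case: computing the characteristic polynomial of the homological monodromy for $K_{p,q}$ as a function of $p$ and $q$, and proving cleanly that its roots are real and positive for exactly the claimed family of pairs (and, in the remaining cases, that the specific candidate element of Theorem 1.1 must be generalized torsion rather than merely failing the eigenvalue test). A secondary difficulty is checking peripheral compatibility of orderings across the JSJ tori — ensuring that the bi-orderings on adjacent pieces can be chosen to restrict to the \emph{same} ordering on the common boundary torus, which is exactly the point where a local obstruction can become a global one.
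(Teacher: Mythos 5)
Your plan founders at its first step: $K_{p,q}(0)$ fibers over $S^1$ only when $K_{p,q}$ is a fibered knot, and since the Alexander polynomial of $K_{p,q}$ is $-pqt^{2}+(2pq+1)t-pq$, which is monic only for $pq=\pm1$, the only fibered double twist knots are the trefoil and the figure-eight. For every other pair $(p,q)$ there is no surface-bundle structure on $K_{p,q}(0)$ and hence no monodromy to which the Perron--Rolfsen eigenvalue criterion could be applied. The fallback you propose, gluing bi-orders of the JSJ pieces across the decomposing tori, is not available either: there is no general theorem that bi-orders on the pieces which agree on the boundary tori assemble to a bi-order of the amalgam, and the paper does not use the JSJ decomposition in the proof of this theorem at all (it is used only to distinguish the manifolds $K_{p,q}(0)$ in the proof of Theorem~\ref{mainthm}). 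You are also implicitly aiming at a stronger statement than the theorem asserts, namely a classification of which $(p,q)$ give bi-orderable groups; the paper explicitly does not achieve this, and the ``if and only if'' is between two properties of a fixed group, not a list of good pairs.

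The missing idea, on which the paper's proof turns, is to use the hypothesis that there are \emph{no generalized torsion elements} as a computational tool. By Lemma~\ref{lemcandidate} the hypothesis forces $[xy,yx]=1$, i.e.\ $[b,t^{-1}bt]=1$, and an induction (Proposition~\ref{commute}) that repeatedly manufactures elements of some $\left<\left<\,\cdot\,\right>\right>^{+}$ and kills them by the hypothesis yields $[b,t^{-n}bt^{n}]=1$ for all $n$. This makes the commutator subgroup abelian, so $G_{p,q}$ becomes the metabelian group generated by $\tau$ and $x_{n}=t^{n}bt^{-n}$ with the Alexander-type relation $x_{i+1}^{pq}x_{i}^{-(2pq+1)}x_{i-1}^{pq}=1$; using the absence of torsion (again a consequence of the hypothesis) it embeds into $\mathbb{R}^{2}\rtimes_{A}\mathbb{Z}$, where $A$ has eigenvalues $\frac{(2pq+1)\pm\sqrt{4pq+1}}{2pq}$. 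These are real and positive precisely because $pq>0$, and the lexicographic order in the eigenbasis is the Perron--Rolfsen-style bi-order. So your eigenvalue-positivity instinct is the right one, but it must be applied to the metabelian quotient forced by the hypothesis, not to a fibration that does not exist.
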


The rest of this paper is constructed as follows. 
In Section~\ref{secgeneralisedtorsion}, we give some definitions and lemmas about generalized torsion elements that are needed later. 
In Section~\ref{secdoubletwist}, we consider the double twist knots, their complements and the 3-manifolds obtained by the $0$-surgeries along these knots. 
We give some presentations of the fundamental groups of such 3-manifolds. 
In Section~\ref{secjsj}, we consider the JSJ-decompositions of the 3-manifolds obtained by the $0$-surgeries along double twist knots. 
Using this, we see that two double twist knots give homeomorphic 3-manifolds by the $0$-surgeries if and only if these knots are equivalent. 
In Section~\ref{secprfofthm}, we give a proof of Theorem~\ref{mainthm} using the results above. A candidate for a generalized torsion element is constructed there. 
In Section~\ref{secprfofthm2}, we give a proof of Theorem~\ref{mainthm2} using some computation. 
%This section is independent of the other sections. 
In Section~\ref{secprfofthm3}, we give a proof of Theorem~\ref{mainthm3}. 
Assuming there are no generalized torsion elements, we give a bi-order. 
In Section~\ref{seccomputations}, we give some results of computation which judges whether the candidate is a generalized torsion element. 

\subsection*{Acknowledgements}
The author would like to thank professor Nozaki for giving him many helpful advices for computations.

\subsection*{Notation}
We give some notations about elements of group. 
The conjugate $h^{-1}gh$ of $g$ with $h$ is represented by the notation $g^{h}$. 
The commutator $g^{-1}h^{-1}gh$ of $g$ and $h$ is represented by the notation $[g,h]$. 
% $\prod \limits_{k=1}^{m}$

\section{Terminologies and Lemmas for generalized torsion elements}\label{secgeneralisedtorsion}
In this section, we define some terminologies and prove lemmas about generalized torsion elements which are needed in later. 
In this section $F$ denotes a group. 

\begin{defini}(Generalized torsion elements)\\
A non-trivial element $g\in F$ is called a {\it generalized torsion element} if there exists a positive integer $n$ and $h_{1},\dots,h_{n}\in F$ such that $g^{h_1}\cdot g^{h_{2}}\cdots g^{h_n}=1_{F}$ holds. 
Minimal such $n$ is called the {\it order} of $g$.
\end{defini} 

%The existing of generalized torsion elements prevents the group from being bi-orderable. 
A group $F$ is called {\it bi-orderable} if it admits a total order $<$ such that $agb<ahb$ holds for any $g,h,a,b\in F$ whenever $g<h$ holds i.e the order $<$ is invariant under multiplying elements from left and right. Such an order is called {\it bi-order}. 
Suppose that $F$ has a generalized torsion element $g$ and take $h_{1},\dots,h_{n}$ in the definition. 
If $F$ admitted a bi-order $<$, then $g^{h_k}<1_{F}$ for all $k$ or $g^{h_k}>1_{F}$ for all $k$ would hold. 
This would imply that $g^{h_1}\cdot g^{h_{2}}\cdots g^{h_n}<1_{F}$ or $g^{h_1}\cdot g^{h_{2}}\cdots g^{h_n}>1_{F}$ would hold. 
This leads a contradiction. 
Hence existing of a generalized torsion element prevents a group from being bi-orderable. 

\begin{defini}
For an element $g\in F$, the semigroup consisting of non-empty finite products of conjugates of $g$ is denoted by $\left<\left<g\right>\right>^{+}$. 
It can be checked easily that a non-trivial element $g$ is a generalized torsion element if and only if $1_{F}\in \left<\left<g\right>\right>^{+}$.
\end{defini}

\begin{lem}\label{lemitomotegiteragaito}{\rm (Lemma 4.1 in \cite{itomotegiteragaito})}\\
Let $g,h$ and $x$ be elements of $F$. Then the following holds.
\begin{enumerate}
 \item $g^{n}h^{n}\in \left<\left<gh\right>\right>^{+}$ for all $n>0$.
 \item If $gh\in \left<\left<x\right>\right>^{+}$, then $g^{n}h^{n} \in \left<\left<x\right>\right>^{+}$ for all $n>0$.
 \item If $[g,h]\in \left<\left<x\right>\right>^{+}$, then $[g^{n},h^{m}]\in \left<\left<x\right>\right>^{+}$ for all $n,m>0$.
\end{enumerate}
\end{lem}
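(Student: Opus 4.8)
The plan is to prove the three items in order, since each essentially feeds into the next. For item (1), the idea is to exhibit $g^n h^n$ as a product of conjugates of $gh$ by a "peeling off" induction on $n$. When $n=1$ there is nothing to prove. For the inductive step I would write $g^{n}h^{n} = g \cdot g^{n-1}h^{n-1} \cdot h = g\,(g^{n-1}h^{n-1})\,h$, and then conjugate: $g\,(g^{n-1}h^{n-1})\,h = (g^{n-1}h^{n-1})^{h} \cdot (h^{-1}g^{-1}\cdot g^{n-1}h^{n-1}\cdot g h)$ — that bookkeeping is a bit delicate, so instead the cleaner route is to observe $g^{n}h^{n} = (gh)^{h^{-1}(h\cdots)}\cdots$; concretely, note $g^{n}h^{n} = \bigl(g(gh)g^{-1}\bigr)^{?}\cdots$. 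The robust version: using $g^{n}h^{n} = g^{n-1}\,(gh)\,h^{-1}\,h^{n}\cdot$ — let me just record the standard identity $g^{n}h^{n} = \prod_{k=1}^{n} (gh)^{h^{k-1}}$ after suitable rewriting, and verify it by induction; since $\langle\langle gh\rangle\rangle^{+}$ is a semigroup closed under the operations used, the product of conjugates of $gh$ stays in $\langle\langle gh\rangle\rangle^{+}$.

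For item (2), the hypothesis $gh \in \langle\langle x\rangle\rangle^{+}$ means $gh$ is itself a finite product of conjugates of $x$; every conjugate $(gh)^{a}$ is then also such a product, so $\langle\langle gh\rangle\rangle^{+} \subseteq \langle\langle x\rangle\rangle^{+}$. Combining with item (1), $g^{n}h^{n} \in \langle\langle gh\rangle\rangle^{+} \subseteq \langle\langle x\rangle\rangle^{+}$, which is exactly the claim. For item (3), I would first reduce $[g^{n},h^{m}]$ to the case handled by (2). Write $[g^{n},h^{m}] = (g^{n})^{-1}(h^{m})^{-1}g^{n}h^{m} = (g^{-n})(h^{-m})(g^{n}h^{m})$, and set $a = g^{-n}h^{-m}$ — no, better: apply (2) with the roles $g \rightsquigarrow [g,h^{m}]$-type factors. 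The cleanest path: first show $[g,h] \in \langle\langle x\rangle\rangle^{+}$ implies $[g,h^{m}] \in \langle\langle x\rangle\rangle^{+}$ for all $m>0$, by writing $[g,h^{m}]$ as a telescoping product $\prod_{j=0}^{m-1}[g,h]^{h^{j}}$ (the standard commutator expansion $[g,h^{m}] = [g,h]\,[g,h]^{h}\cdots[g,h]^{h^{m-1}}$), each factor a conjugate of something in $\langle\langle x\rangle\rangle^{+}$, hence in $\langle\langle x\rangle\rangle^{+}$. Then apply the same argument in the first variable: $[g^{n},h^{m}] = [g,h^{m}]^{g^{n-1}}\cdots[g,h^{m}]^{g}\,[g,h^{m}]$ by the analogous expansion, again a product of conjugates of elements of $\langle\langle x\rangle\rangle^{+}$.

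The main obstacle is item (1) — getting the bracketing of the conjugating elements exactly right so that $g^n h^n$ genuinely appears as a product of conjugates \emph{of $gh$} (not of powers of $gh$, which would be circular). Once that telescoping identity is pinned down and verified by induction, items (2) and (3) are formal consequences of the semigroup closure of $\langle\langle - \rangle\rangle^{+}$ under conjugation and multiplication together with the standard commutator identity $[a,bc] = [a,c]\,[a,b]^{c}$ and its mirror $[ab,c] = [a,c]^{b}\,[b,c]$.
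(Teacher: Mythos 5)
Your proof is correct, and for item (3) it takes a genuinely different route from the paper's. For item (1), the identity you eventually settle on, $g^{n}h^{n}=(gh)(gh)^{h}(gh)^{h^{2}}\cdots(gh)^{h^{n-1}}$, does hold under the paper's convention $a^{b}=b^{-1}ab$ and is just the mirror image of the telescoping identity the paper uses, $g^{n}h^{n}=(gh)^{g^{-(n-1)}}(gh)^{g^{-(n-2)}}\cdots(gh)$ (you conjugate by positive powers of $h$, the paper by negative powers of $g$); in the final writeup you should actually carry out the one-line induction and delete the false starts, but there is no gap. Item (2) is identical to the paper's argument. Where you diverge is item (3): the paper obtains it by applying item (2) twice, first to the factorization $[g,h]=g^{-1}\cdot(h^{-1}gh)$, which gives $g^{-n}(h^{-1}g^{n}h)=[g^{n},h]\in\left<\left<x\right>\right>^{+}$, and then to the factorization $[g^{n},h]=(g^{-n}h^{-1}g^{n})\cdot h$, which gives $[g^{n},h^{m}]\in\left<\left<x\right>\right>^{+}$; you instead expand the commutator directly via $[a,bc]=[a,c][a,b]^{c}$ and $[ab,c]=[a,c]^{b}[b,c]$, exhibiting $[g^{n},h^{m}]$ explicitly as a product of $nm$ conjugates of $[g,h]$ and invoking only the closure of $\left<\left<x\right>\right>^{+}$ under products and conjugation. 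Both arguments are valid: the paper's is slicker in that it recycles item (2) and needs no commutator calculus, while yours is self-contained for (3) and has the minor bonus of making the number of conjugates involved explicit.
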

\begin{proof} (From \cite{itomotegiteragaito})\\
The first follows from the equality $g^{n}h^{n}=(gh)^{g^{-(n-1)}}(gh)^{g^{-(n-2)}}\cdots(gh)$. \\
The second follows from the first and the fact that if $gh\in \left<\left<x\right>\right>^{+}$, then $\left<\left<gh\right>\right>^{+}\subset \left<\left<x\right>\right>^{+}$.\\
For the third, assume that $[g,h]=g^{-1}(h^{-1}gh)\in \left<\left<x\right>\right>^{+}$. 
%Then $\left<\left<[g,h]\right>\right>^{+}=\left<\left<g^{-1}(h^{-1}gh)\right>\right>^{+}\subset \left<\left<x\right>\right>^{+}$. 
Apply the second to see that $(g^{-n}h^{-1}g^{n})h=g^{-n}(h^{-1}g^{n}h)\in \left<\left<x\right>\right>^{+}$. 
Then apply the second again to see that $[g^n,h^m]=g^{-n}h^{-m}g^{n}h^{m}=(g^{-n}h^{-m}g^{n})h^{m}\in \left<\left<x\right>\right>^{+}$. 
\end{proof}

\begin{lem}\label{lemforlongitude}
Let $g$ and $h$ be elements of $F$, and $n$ and $m$ positive integers. \\
Then $\{(hg)^{-m}(gh)^{m}\}^{n}\{(hg)^{m}(gh)^{-m}\}^{n}\in \left<\left<[gh,hg]\right>\right>^{+}$. 
\end{lem}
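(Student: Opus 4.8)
The plan is to reduce the statement to an application of part (3) of Lemma~\ref{lemitomotegiteragaito}, the commutator-power stability, by rewriting the long product in terms of a single commutator. Set $a = gh$ and $b = hg$. Then the claimed element is $\{b^{-m}a^{m}\}^{n}\{b^{m}a^{-m}\}^{n}$, and we must show it lies in $\left<\left<[a,b]\right>\right>^{+}$ (recalling $[gh,hg] = [a,b] = a^{-1}b^{-1}ab$). Notice that $b^{-m}a^{m}$ and $b^{m}a^{-m}$ are mutually inverse, so $\{b^{-m}a^{m}\}^{n}\{b^{m}a^{-m}\}^{n}$ is a product of the form $c^{n}(c^{-1})^{n}$ with $c = b^{-m}a^{m}$; this is exactly the shape handled by part (2) of Lemma~\ref{lemitomotegiteragaito} once we know that $c\cdot c^{-1}$—no, that is trivial. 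The right move is instead: $\{b^{-m}a^{m}\}^{n}\{b^{m}a^{-m}\}^{n}$ has the form $u^{n}v^{n}$ with $u = b^{-m}a^{m}$ and $v = b^{m}a^{-m}$, and $uv = b^{-m}a^{m}b^{m}a^{-m} = [b^{m}, a^{-m}]^{\text{(up to rewriting)}}$. So the key identity to verify is
\begin{equation}
b^{-m}a^{m}b^{m}a^{-m} = [a^{m}, b^{-m}]^{\,x}
\end{equation}
for a suitable conjugating element $x$, or more directly that $b^{-m}a^{m}b^{m}a^{-m}$ is a conjugate of $[a^{m},b^{m}]$ (or its inverse). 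Indeed $b^{-m}a^{m}b^{m}a^{-m} = b^{-m}(a^{m}b^{m}a^{-m}b^{m})b^{m}\cdot b^{-2m}$... this is getting tangled, so let me state the clean route.

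First I would record the elementary identity $u^{n}v^{n}\in\left<\left<uv\right>\right>^{+}$, which is part (1) of Lemma~\ref{lemitomotegiteragaito} applied with the pair $(u,v)$ (here $u = b^{-m}a^{m}$, $v = b^{m}a^{-m}$). Thus it suffices to show $uv = b^{-m}a^{m}b^{m}a^{-m}\in\left<\left<[a,b]\right>\right>^{+}$. Second, I would observe that $b^{-m}a^{m}b^{m}a^{-m}$ is conjugate to $a^{m}b^{m}a^{-m}b^{-m} = [a^{-m}, b^{-m}]^{-1}$ — more precisely, conjugating by $b^{m}$ sends $b^{-m}a^{m}b^{m}a^{-m}$ to $a^{m}b^{m}a^{-m}b^{-m}$, and since conjugation preserves membership in $\left<\left<\cdot\right>\right>^{+}$, it is enough to show $a^{m}b^{m}a^{-m}b^{-m}\in\left<\left<[a,b]\right>\right>^{+}$. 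Third, rewrite $a^{m}b^{m}a^{-m}b^{-m} = \{a^{m}b^{m}a^{-m}\}b^{-m}$, i.e. the product of $(a^{m})^{b^{-m}}$ with... no: write it as $(b^{m})^{a^{-m}}\cdot$... The cleanest is: $a^{m}b^{m}a^{-m}b^{-m} = [a^{-m}, b^{-m}]^{-1}$, and by part (3) of Lemma~\ref{lemitomotegiteragaito} with $x = [a,b]$, $g = a$, $h = b$, we get $[a^{m}, b^{m}]\in\left<\left<[a,b]\right>\right>^{+}$ provided $[a,b]\in\left<\left<[a,b]\right>\right>^{+}$, which is immediate. The only remaining gap is matching signs: part (3) gives $[a^{m},b^{m}]$, whereas the identity above produces $[a^{-m},b^{-m}]^{-1}$ or a conjugate of $[a^{m},b^{m}]$; I expect these to coincide as elements of $\left<\left<[a,b]\right>\right>^{+}$ after one more conjugation, using that $[a,b]^{-1} = [b,a]$ is a conjugate of $[a,b]$ (namely $[b,a] = [a,b]^{-1}$ — and whether $[a,b]^{-1}\in\left<\left<[a,b]\right>\right>^{+}$ is NOT automatic, so care is needed here).

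The step I expect to be the main obstacle is precisely this last bookkeeping of which commutator and which sign appears: naively one wants $a^{m}b^{m}a^{-m}b^{-m}\in\left<\left<[a,b]\right>\right>^{+}$, but part (3) of Lemma~\ref{lemitomotegiteragaito} is stated for $[g^{n},h^{m}]$ with the \emph{same} orientation as $[g,h]$, so I must make sure that $uv$ unwinds to a \emph{positively} oriented power-commutator and not its inverse. Concretely, the safe path is: apply part (1) with $u = b^{-m}a^{m}$, $v = b^{m}a^{-m}$ to reduce to $uv$; conjugate to replace $uv$ by $a^{m}b^{m}a^{-m}b^{-m}$; then rewrite $a^{m}b^{m}a^{-m}b^{-m} = (b^{-m})^{a^{-m}} \cdot$... rather, identify it as $[(a^{m})^{-1}, (b^{m})^{-1}]^{-1}$ and use instead the substitution $g\mapsto a^{-1}$, $h\mapsto b^{-1}$ in part (3) together with the fact that $[a^{-1},b^{-1}]$ and $[a,b]$ generate the same $\left<\left<\cdot\right>\right>^{+}$ up to conjugacy — verifying that compatibility is the delicate point, and if it fails one falls back to applying parts (1) and (2) directly to the two-term product $a^{m}b^{m}\cdot a^{-m}b^{-m}$ after first establishing $ab\cdot a^{-1}b^{-1}\in\left<\left<[a,b]\right>\right>^{+}$ by a direct one-line conjugation identity. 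I would carry out the argument via this fallback if the orientation matching in the slick version turns out to be awkward.
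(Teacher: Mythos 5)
Your proposal is correct and follows essentially the same route as the paper: reduce $u^{n}v^{n}$ to $uv$ via parts (1)--(2) of Lemma~\ref{lemitomotegiteragaito} with $u=(hg)^{-m}(gh)^{m}$, $v=(hg)^{m}(gh)^{-m}$, identify $uv$ as a power-commutator, and apply part (3). The orientation bookkeeping you flag as the delicate point resolves without your fallback, because $uv=(hg)^{-m}(gh)^{m}(hg)^{m}(gh)^{-m}$ is already the positively oriented commutator $[(hg)^{m},(gh)^{-m}]$, so part (3) applies with the pair $\left(hg,(gh)^{-1}\right)$, and $[hg,(gh)^{-1}]=(gh)[gh,hg](gh)^{-1}$ is an honest conjugate of $[gh,hg]$ (just as $[a^{-1},b^{-1}]=(ab)[a,b](ab)^{-1}$ in your notation), so no inverse of $[gh,hg]$ ever enters and the worry about $[a,b]^{-1}\in\left<\left<[a,b]\right>\right>^{+}$ is moot.
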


\begin{proof}
By the third of Lemma~\ref{lemitomotegiteragaito}, we have 
\begin{align}
(hg)^{-m}(gh)^{m}(hg)^{m}(gh)^{-m}=[(hg)^{m},(gh)^{-m}]\in \left<\left< [hg,(gh)^{-1}]\right>\right>^{+}=\left<\left< (gh)[gh,hg](gh)^{-1}\right>\right>^{+}=\left<\left< [gh,hg]\right>\right>^{+}. \notag
\end{align}
Moreover, by the second of Lemma~\ref{lemitomotegiteragaito}, we have $\{(hg)^{-m}(gh)^{m}\}^{n}\{(hg)^{m}(gh)^{-m}\}^{n}\in \left<\left<[gh,hg]\right>\right>^{+}$. 
\end{proof}

\section{Double twist knots and their $0$-surgeries}\label{secdoubletwist}
\subsection{Double twist knots}
For non-zero integers $p$ and $q$, $K_{p,q}$ denotes the knot which is represented as $C[2p,2q]$ in the Conway notation. 
This $K_{p,q}$ is called the {\it double twist knot of type} $(p,q)$. 
See Figure~\ref{Kpq}, which we call the standard diagram of $K_{p,q}$. 
In Figure~\ref{Kpq}, $(-n)$-left twists (or right twists) represents $n$-right twists (or left twists, respectively). 
For example, $K_{1,1}$ is the figure eight knot and $K_{1,-1}$ is the right-handed trefoil. 
Note that $K_{p,q}$ is isotopic to $K_{-q,-p}$, that $K_{p,q}$ is the mirror of $K_{-p,-q}$ and $K_{q,p}$ (see Figure~\ref{isotopymirror}), that the bridge number of $K_{p,q}$ is two and that the genus of $K_{p,q}$ is one.

\begin{figure}[htbp]
 \begin{center}
  \includegraphics[width=120mm]{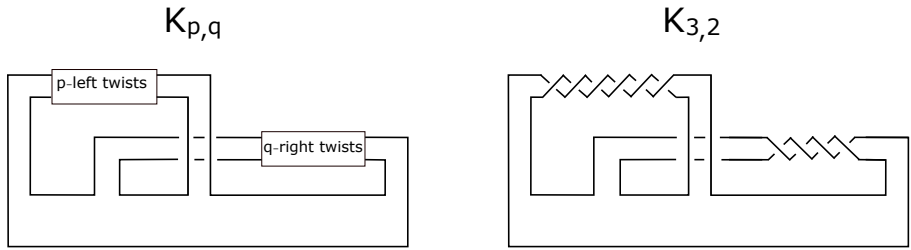}
 \end{center}
 \caption{left: the standard diagram of $K_{p,q}$, \ \ \ right: $K_{3,2}$}
 \label{Kpq}
\end{figure}

\begin{figure}[htbp]
 \begin{center}
  \includegraphics[width=140mm]{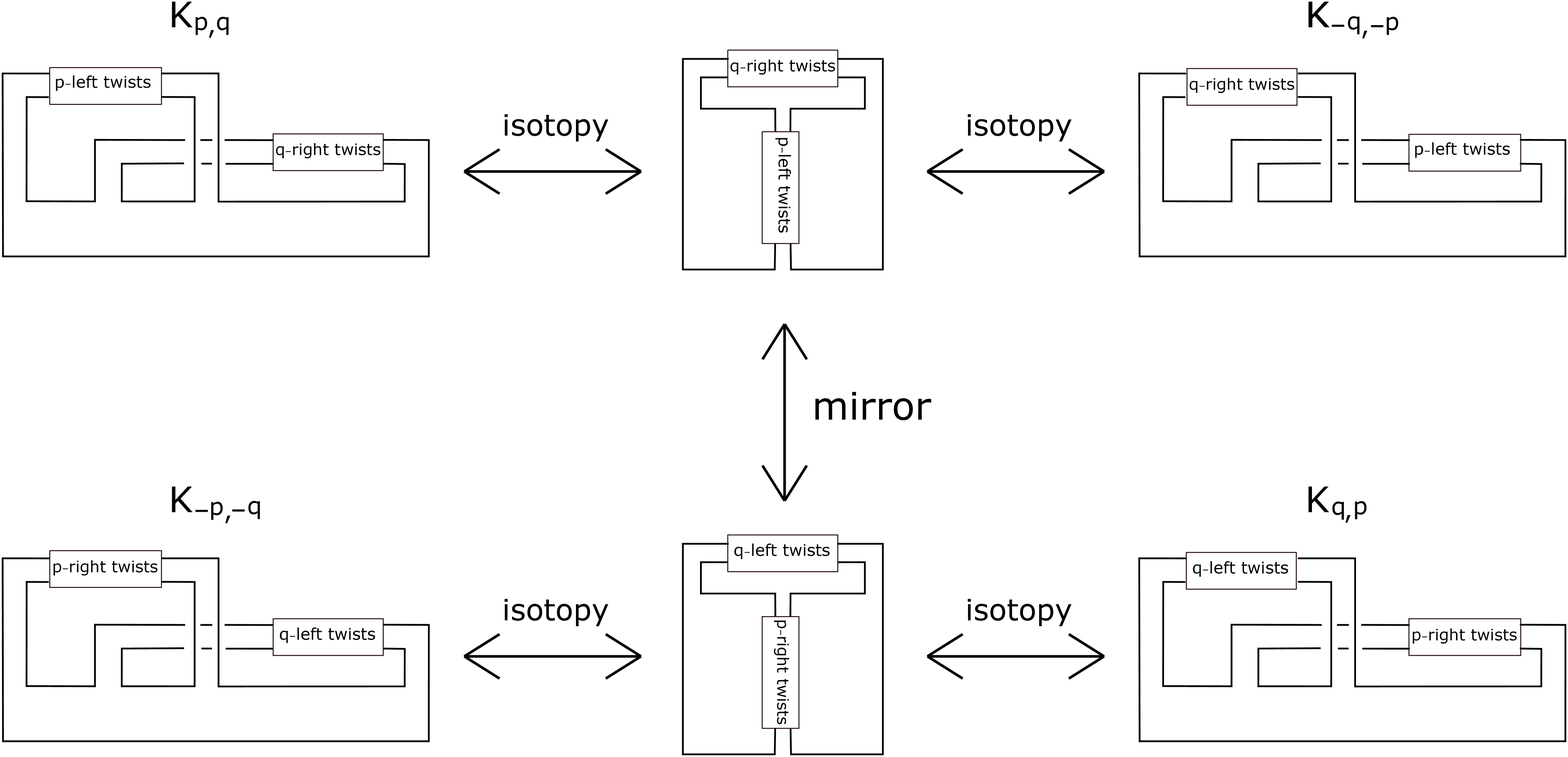}
 \end{center}
 \caption{Correspondence among $K_{p,q}$, $K_{-p,-q}$, $K_{-p,-q}$ and $K_{q,p}$. }
 \label{isotopymirror}
\end{figure}

In the standard diagram of $K_{p,q}$, we fix a Seifert surface $T'$ of genus one as the shaded surface in Figure~\ref{seifertsurf}. 
Let $E_{p,q}$ denote the closure of the complement of $K_{p,q}$ in $S^{3}$.  
There is a useful presentation of the fundamental group of $E_{p,q}$, so called the Lin presentation: 
\begin{align}
 \pi_{1}(E_{p,q}) = \left<a, b, t \mid ta^{p}t^{-1}=b^{-1}a^{p}, \ tb^{-q}a^{-1}t^{-1}=b^{-q} \right> \label{eqlinpresentation}
\end{align}
In this presentation, $a$, $b$ and $t$ are based loops in $E_{p,q}$ as in Figure~\ref{linpresentation}. 
This presentation is obtained by noting that the complement of the Seifert surface $T'$ is a handlebody whose fundamental group is generated by loops $a$ and $b$, that the push ups of  loops $x$ and $y$ on $T'$ is $a^p$ and $b^{-q}a^{-1}$, respectively and that the push downs of  loops $x$ and $y$ on $T'$ is $b^{-1}a^{p}$ and $b^{-q}$, respectively. 
In this presentation, the meridian of $K_{p,q}$ corresponds to $t$ and the canonical longitude of it corresponds to $[b^{q},a^{p}]$. 

\begin{figure}[htbp]
 \begin{center}
  \includegraphics[width=70mm]{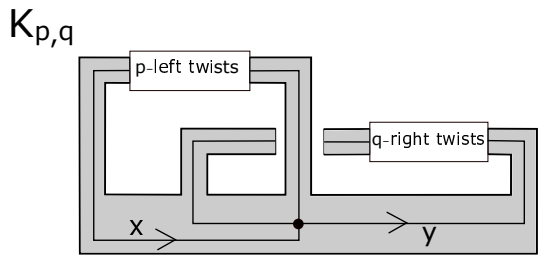}
 \end{center}
 \caption{Seifert surface $T'$ of $K_{p,q}$ and based loops on $T'$}
 \label{seifertsurf}
\end{figure}

\begin{figure}[htbp]
 \begin{center}
  \includegraphics[width=70mm]{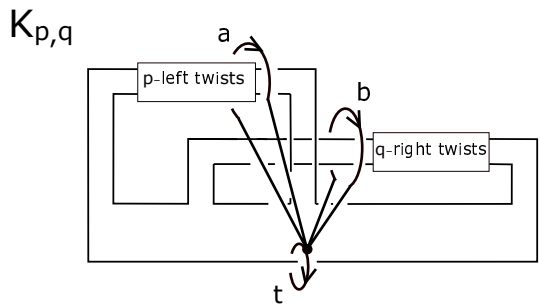}
 \end{center}
 \caption{the generators of the Lin presentation of $\pi_{1}(E_{p,q})$}
 \label{linpresentation}
\end{figure}

\subsection{The $0$-surgeries along double twist knots}
Let $K_{p,q}(0)$ denote the 3-manifold obtained by the $0$-surgery along $K_{p,q}$. 
In $K_{p,q}(0)$, the boundary of the (minimal genus) Seifert surface $T'$ is capped-off by a disk. 
We call this resultant closed torus $T$. 
According to \cite{gabai}, $K_{p,q}(0)$ is irreducible and $T$ is incompressible. 
From the presentation \eqref{eqlinpresentation}, the fundamental group of $K_{p,q}(0)$, denoted by $G_{p,q}$, can be computed as: 
\begin{align}\label{stdrep}
 G_{p,q}= \pi_{1}\left(K_{p,q}(0)\right) = \left<a, b, t \mid ta^{p}t^{-1}=b^{-1}a^{p}, \ tb^{-q}a^{-1}t^{-1}=b^{-q}, \  [b^{q},a^{p}]=1 \right>.
\end{align}
Moreover, we consider $\overline{K_{p,q}(0)\setminus T}$, the closure  of the complement of $T$ in $K_{p,q}(0)$. 
This is obtained from the closure of the complement of the Seifert surface $T'$ of $K_{p,q}$ in $S^{3}$, which is a handlebody of genus two by attaching a $2$-handle along the longitude of $K_{p,q}$. 
Following this construction, the fundamental group of $\overline{K_{p,q}(0)\setminus T}$, denoted by $\Gamma_{p,q}$, can be computed as: 
\begin{align}
 \Gamma_{p,q}= \pi_{1}\left( \overline{K_{p,q}(0)\setminus T}\right) = \left<a, b\mid [b^{q},a^{p}]=1 \right>. \label{gammapq}
\end{align}
The boundary of $\overline{K_{p,q}(0)\setminus T}$ consists of two tori. 
We call the one coming from the front side of $T'$ $T_{+}$ and call the other $T_{-}$. 
We fix presentations of $\pi_{1}(T_{\pm})$, denoted by $\Gamma_{\pm}$ as follows, where $X_{\pm}$ and $Y_{\pm}$ are coming from $x$ and $y$ in Figure~\ref{seifertsurf}. 
\begin{align}
\Gamma_{+}&=\pi_{1}(T_{+})=\left< X_{+}, Y_{+} \mid [X_{+}, Y_{+}]=1 \right>  \\ \notag \\
\Gamma_{-}&=\pi_{1}(T_{-})=\left< X_{-}, Y_{-} \mid [X_{-}, Y_{-}]=1 \right> 
\end{align}
Then we take homomorphisms $\iota _{+}: \Gamma_{+}\longrightarrow \Gamma$ and $\iota _{-}: \Gamma_{-}\longrightarrow \Gamma$ which are induced by the inclusions such that $\iota_{+}(X_{+})=a^{p}$, $\iota_{+}(Y_{+})=b^{-q}a^{-1}$, $\iota_{-}(X_{-})=b^{-1}a^{p}$ and $\iota_{-}(Y_{-})=b^{-q}$ hold. 
Note that since $K_{p,q}(0)$ is irreducible and $T$ is incompressible, $\overline{K_{p,q}(0)\setminus T}$ is irreducible and has incompressible boundary. 

\subsection{Another presentation}\label{subsecanother}
For our use, we give another presentations of the fundamental groups of $E_{p,q}$ and $K_{p,q}(0)$. 
By changing the presentation \eqref{eqlinpresentation} as follows, we get a presentation of the fundamental groups of $E_{p,q}$.
\begin{align}
  \pi_{1}(E_{p,q}) &= \left<a, b, t \mid ta^{p}t^{-1}=b^{-1}a^{p}, \ tb^{-q}a^{-1}t^{-1}=b^{-q}  \right> \notag \\ 
   &= \left<a, b, t \mid ta^{p}t^{-1}=b^{-1}a^{p}, \ a=t^{-1}b^{q}tb^{-q}  \right> \notag \\ 
   &= \left<b, t \mid t(t^{-1}b^{q}tb^{-q})^{p}t^{-1}=b^{-1}(t^{-1}b^{q}tb^{-q})^{p} \right> \notag \\
   &= \left<b, t, x, y \mid t(t^{-1}b^{q}tb^{-q})^{p}t^{-1}=b^{-1}(t^{-1}b^{q}tb^{-q})^{p},\ x=bt, \ y=t^{-1} \right> \notag \\
    &= \left<b, t, x, y \mid t(t^{-1}b^{q}tb^{-q})^{p}t^{-1}=b^{-1}(t^{-1}b^{q}tb^{-q})^{p},\ b=xy, \ t=y^{-1} \right> \notag \\
    &= \left<x, y \mid y^{-1} \{ (yx)^{q}(xy)^{-q})\}^{p}  y=y^{-1}x^{-1}\{ (yx)^{q}(xy)^{-q})\}^{p} \right> \notag \\
    &= \left<x, y \mid \{ (yx)^{q}(xy)^{-q})\}^{p}  y=x^{-1}\{ (yx)^{q}(xy)^{-q})\}^{p} \right> \label{anotherpresentation1}
\end{align}  
In the deformation above, the meridian $t$ is changed into $y^{-1}$ and the longitude $[b^{q},a^{p}]$ is changed into $\{ (yx)^{-q}(xy)^{q})\}^{p}\{ (yx)^{q}(xy)^{-q})\}^{p}$. 
Then we get another presentation of $G_{p,q}$, the fundamental group of $K_{p,q}(0)$ as follows. 
\begin{align}
 G_{p,q} = \left<x, y \mid \{ (yx)^{q}(xy)^{-q})\}^{p}  y=x^{-1}\{ (yx)^{q}(xy)^{-q})\}^{p}, \ \{ (yx)^{-q}(xy)^{q})\}^{p}\{ (yx)^{q}(xy)^{-q})\}^{p}=1 \right> \label{anotherpresentation2}
\end{align}

\begin{rmk}
The presentations \eqref{anotherpresentation1} and \eqref{anotherpresentation2} are obtained directly by noting that the tunnel number of $K_{p,q}$ is one.
\end{rmk}

\begin{lem}\label{lemcandidate}
Let $p$ be a positive integer and $q$ a non-zero integer. 
Then in $G_{p,q}$ with presentation \eqref{anotherpresentation2}, %some products of some conjugates of $[xy,yx]$ is the identity element. 
$1\in \left<\left< [xy,yx] \right>\right>^{+}$ holds. 
\end{lem}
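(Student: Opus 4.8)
The plan is to exhibit $1 \in \langle\langle [xy,yx]\rangle\rangle^{+}$ by combining the two defining relations of the presentation \eqref{anotherpresentation2} with Lemma~\ref{lemforlongitude}. Write $g = xy$ and $h = yx$, so the element we want to generate a torsion relation from is $[g,h] = [xy,yx]$. The second relator of \eqref{anotherpresentation2} says precisely that $\{(hg)^{-q}(gh)^{q}\}\cdot\{(hg)^{q}(gh)^{-q}\}$-type words — more precisely $\{h^{-q}g^{q}\}^{p}\{h^{q}g^{-q}\}^{p}$ after matching notation — equals $1$ in $G_{p,q}$; note here $hg = (yx)(xy)$ and $gh=(xy)(yx)$, but one must be careful: in the relator it is $(yx)^{q}(xy)^{-q}$ rather than $(hg)$ and $(gh)$. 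So the first step is a careful bookkeeping check that the longitude relator $\{(yx)^{-q}(xy)^{q}\}^{p}\{(yx)^{q}(xy)^{-q}\}^{p}=1$ can be rewritten, using only conjugation and the substitution $u = yx$, $v = xy$, into the exact shape $\{(hg)^{-m}(gh)^{m}\}^{n}\{(hg)^{m}(gh)^{-m}\}^{n}$ that appears on the left side of Lemma~\ref{lemforlongitude}, with the roles of $g,h$ chosen so that $gh = xy$ and $hg = yx$.

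Concretely, I would set $g$ and $h$ so that $gh = x y$ and $hg = yx$: taking $g = x$, $h = y$ works, giving $gh = xy$, $hg = yx$, $(hg)^{q}(gh)^{-q} = (yx)^{q}(xy)^{-q}$ and $(hg)^{-q}(gh)^{q} = (yx)^{-q}(xy)^{q}$. Then Lemma~\ref{lemforlongitude} with $m = q$ (assuming $q>0$; handle $q<0$ by replacing $q$ with $-q$ and swapping the two factors, which only reverses the order of a product of conjugates and hence stays inside the positive cone) and $n = p$ gives
\[
\{(yx)^{-q}(xy)^{q}\}^{p}\{(yx)^{q}(xy)^{-q}\}^{p} \in \langle\langle [xy,yx]\rangle\rangle^{+}.
\]
But the left-hand side is exactly the longitude relator, which equals $1_{G_{p,q}}$. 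Hence $1 \in \langle\langle [xy,yx]\rangle\rangle^{+}$, as desired. The case $p$ positive is all we need by hypothesis.

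The main obstacle I anticipate is purely notational: making sure the exponents and the order of the two blocks in the longitude relator line up with the statement of Lemma~\ref{lemforlongitude} after the substitution, and in particular handling the sign of $q$. For $q<0$, $\{(yx)^{-q}(xy)^{q}\}^{p}$ becomes $\{(yx)^{|q|}(xy)^{-|q|}\}^{p}$ and the product of the two blocks is $\{(hg)^{m}(gh)^{-m}\}^{n}\{(hg)^{-m}(gh)^{m}\}^{n}$ with $m = |q|$ — i.e. the two factors are swapped relative to Lemma~\ref{lemforlongitude}. Since a swap of two elements each lying in the positive cone $\langle\langle\cdot\rangle\rangle^{+}$ still lies in the positive cone (the cone is a semigroup, and membership of a product of two cone elements in either order is automatic once each factor is in the cone), this causes no real difficulty; alternatively one notes $[xy,yx]$ and $[yx,xy]$ generate the same positive cone since $[yx,xy] = (xy)^{-1}[xy,yx]^{-1}(xy)\cdots$ — more simply, reversing $g \leftrightarrow h$ in Lemma~\ref{lemforlongitude} already produces the swapped form and the conclusion is stated symmetrically enough. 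So the whole proof is: rewrite the longitude relator, apply Lemma~\ref{lemforlongitude}, observe the relator is trivial. I do not expect any genuinely hard step.
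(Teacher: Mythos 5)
Your argument for $q>0$ is exactly the paper's: take $g=x$, $h=y$, $m=q$, $n=p$ in Lemma~\ref{lemforlongitude} and observe that the resulting word is the longitude relator. For $q<0$ you diverge from the paper. Writing $m=-q>0$, the relator becomes $BA$ where $A=\{(yx)^{-m}(xy)^{m}\}^{p}$ and $B=\{(yx)^{m}(xy)^{-m}\}^{p}$, i.e.\ the two blocks of Lemma~\ref{lemforlongitude} in the opposite order, and you want to deduce $BA\in\left<\left<[xy,yx]\right>\right>^{+}$ from $AB\in\left<\left<[xy,yx]\right>\right>^{+}$. That deduction is correct, but two of the three justifications you offer for it are not: the factors $A$ and $B$ are not individually known to lie in the positive cone (Lemma~\ref{lemforlongitude} only places their product there), so the ``product of two cone elements in either order'' argument has a false premise; and swapping $g\leftrightarrow h$ in Lemma~\ref{lemforlongitude} yields $\{(xy)^{-m}(yx)^{m}\}^{n}\{(xy)^{m}(yx)^{-m}\}^{n}\in\left<\left<[yx,xy]\right>\right>^{+}$, which is a different word lying in the cone of $[yx,xy]=[xy,yx]^{-1}$, not the relator in the cone of $[xy,yx]$. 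The clean justification is simply that $BA=B(AB)B^{-1}$ is a conjugate of $AB$, and $\left<\left<\cdot\right>\right>^{+}$ is closed under conjugation since a conjugate of a product of conjugates is again such a product; you should state this explicitly. The paper sidesteps the issue by instead applying Lemma~\ref{lemforlongitude} with $g=y^{-1}$, $h=x^{-1}$, which reproduces the relator verbatim and lands in $\left<\left<[(xy)^{-1},(yx)^{-1}]\right>\right>^{+}=\left<\left<[xy,yx]\right>\right>^{+}$, the last equality because $[(xy)^{-1},(yx)^{-1}]$ is a conjugate of $[xy,yx]$. Either route works once conjugation-invariance of the cone is invoked; as written, your $q<0$ case needs that one sentence to be sound.
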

\begin{proof}
For positive $q$, considering the second relation of \eqref{anotherpresentation2} and applying Lemma~\ref{lemforlongitude} by replacing $g$ and $h$ with $x$ and $y$ respectively, we have
\begin{align}
1=\{ (yx)^{-q}(xy)^{q})\}^{p}\{ (yx)^{q}(xy)^{-q})\}^{p}\in \left<\left<[xy,yx]\right>\right>^{+}. \notag
\end{align}
For negative $q$, considering the second relation of \eqref{anotherpresentation2} and applying Lemma~\ref{lemforlongitude} by replacing $g$ and $h$ with $y^{-1}$ and $x^{-1}$ respectively, we have
\begin{align}
1&=\{ (yx)^{-q}(xy)^{q})\}^{p}\{ (yx)^{q}(xy)^{-q})\}^{p}=\{ (x^{-1}y^{-1})^{q}(y^{-1}x^{-1})^{-q})\}^{p}\{ (x^{-1}y^{-1})^{-q}(y^{-1}x^{-1})^{q})\}^{p} \notag \\
 &\in \left<\left<[y^{-1}x^{-1},x^{-1}y^{-1}]\right>\right>^{+}=\left<\left<[(xy)^{-1},(yx)^{-1}]\right>\right>^{+}=\left<\left<[xy,yx]\right>\right>^{+}. \notag
\end{align}
\end{proof}

Note that the lemma above does not imply that $[xy,yx]$ is a generalized torsion element. 
It may be the trivial element.

\section{The JSJ-decompositions of 3-manifolds obtained by the $0$-surgeries along double twist knots}\label{secjsj}
Let $M$ be a prime compact 3-manifold with (possibly empty) incompressible boundary. 
There exists a set $\mathcal{T}$ consisting of pairwise non-parallel disjoint essential tori in $M$ such that
\begin{itemize}
\item each component obtained by cutting along the tori in $\mathcal{T}$ is either a Seifert manifold or an atoroidal manifold, and
\item there are no proper subset of $\mathcal{T}$ satisfying the above condition.
\end{itemize}
This decomposition is called the {\it JSJ-decomposition} of $M$ \cite{jacoshalen} \cite{johannson}. 
It is known that the set $\mathcal{T}$ is unique for a given $M$. 
%By the Geometrization conjecture, proved by [Perelman], atoroidal components are hyperbolic manifolds. 
In this section, we give the JSJ-decomposition of the 3-manifold $K_{p,q}(0)$ obtained by the $0$-surgery along the double twist knot $K_{p,q}$.

\subsection{Components for the decomposition}\label{subseccomponents}
We will define 3-manifolds $M(k)$, which appear in the JSJ-decompositions of 3-manifolds obtained by the $0$-surgeries along double twist knots for a non-zero integer $k$. 
Let $A$ be an annulus. 
Take a point $*\in {\rm Int}A$, in the interior of $A$. 
Consider 3-manifold $A\times S^1$. 
For a non-zero integer $k$, $M(k)$ denotes the 3-manifold obtained by $k$-surgery along $\{*\}\times S^1$ from $A\times S^1$. 
We take a longitude of $\{*\}\times S^1$ for the surgery such that it is isotopic to $\{*'\}\times S^1$ for $\{*'\} \in A\setminus \{*\}$ in $\left( A\setminus \{*\} \right)\times S^1$. 
Note that $M(k)$ is a Seifert manifold whose regular fiber comes from the $S^{1}$ factor of $A\times S^1$, that the base orbifold of this fiber structure is annulus with one (or zero) exceptional point, and that $M(k)$ is irreducible and has incompressible boundary. 
Moreover, note that $M(\pm1)$ is $S^{1}\times S^{1}\times [0,1]$ and that $M(k)$ admits the unique fiber structure unless $k=\pm1$. See \cite{jaco} for example. 

\begin{lem}\label{lemmk}
For non-zero integers $k_1$ and $k_2$, $M(k_1)$ and $M(k_2)$ are homeomorphic if and only if $|k_1|=|k_2|$.
\end{lem}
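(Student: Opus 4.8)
The plan is to prove Lemma~\ref{lemmk} by analysing the Seifert fibered structure of $M(k)$ and its boundary behaviour. First I would dispose of the non-uniqueness case: $M(\pm 1)\cong S^1\times S^1\times[0,1]$, so $M(1)\cong M(-1)$, and neither is homeomorphic to any $M(k)$ with $|k|\ge 2$ since the latter has a genuine exceptional fiber (its base orbifold is an annulus with one cone point of order $|k|\ge 2$), whereas $S^1\times T^1\times[0,1]$ does not; one can detect this, for instance, from the fact that $M(k)$ for $|k|\ge 2$ has a unique Seifert fibration (cited above via \cite{jaco}), while $S^1\times T^1\times[0,1]$ admits infinitely many. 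So from now on assume $|k_1|,|k_2|\ge 2$.

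For the "if" direction I would exhibit an explicit homeomorphism $M(k)\to M(-k)$. The orientation-reversing self-homeomorphism of $A\times S^1$ that reverses the $S^1$ factor (or reverses the annulus) sends the surgery curve $\{*\}\times S^1$ with framing $k$ to the same curve with framing $-k$, hence extends to a homeomorphism $M(k)\cong M(-k)$. This handles $|k_1|=|k_2|$.

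For the "only if" direction, suppose $f\colon M(k_1)\to M(k_2)$ is a homeomorphism with $|k_1|,|k_2|\ge 2$. Since each $M(k_i)$ has a unique Seifert fibration, $f$ is fiber-preserving (up to isotopy), hence induces a homeomorphism of base orbifolds; an annulus with one cone point of order $|k_i|$ goes to one with a cone point of order $|k_j|$, forcing $|k_1|=|k_2|$ directly. Alternatively, and perhaps cleaner to write, I would compute an invariant: the first homology, or better, the way $\pi_1$ of each boundary torus sits inside $\pi_1(M(k))$. We have $\pi_1(M(k))\cong \langle a, b, h \mid [a,h]=[b,h]=1,\ a^{k}=h^{\epsilon}\rangle$ for a suitable framing (with $h$ the fiber class), so $\pi_1(M(k))\cong \mathbb{Z}\times \mathbb{Z}$ when one is careful — this is too lossy, so I would instead use the fiber class: the regular fiber $h$ is characterised intrinsically (center of $\pi_1$, or the unique Seifert structure), and the subgroup generated by a section meets $h$ with multiplicity exactly $|k|$ over the exceptional fiber. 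Concretely, $H_1(M(k))\cong \mathbb{Z}\oplus \mathbb{Z}$ always, but the pair $(H_1(M(k)), \text{image of }H_1(\partial))$ or the linking form / the order of torsion in $H_1$ of the manifold obtained by capping one boundary component detects $|k|$. I would pick whichever of these is shortest to verify.

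The main obstacle I expect is making the "unique Seifert fibration $\Rightarrow$ fiber-preserving homeomorphism" step rigorous and clean, i.e. correctly citing and applying the classification of Seifert fibered spaces with boundary (Waldhausen/Jaco), since $M(k)$ has nonempty boundary and one must be careful that the fibration is unique up to isotopy (not just isomorphism) and that this forces $f$ to respect it. Once that is in hand, comparing base orbifolds (annulus with a single cone point of order $|k|$) immediately yields $|k_1|=|k_2|$, so the bulk of the work is this one classification input plus the explicit mirror homeomorphism for the converse.
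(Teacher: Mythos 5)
Your proposal is correct and follows essentially the same route as the paper: for $|k|\ge 2$ the Seifert fibration of $M(k)$ is unique, so a homeomorphism must preserve the fibered structure, and comparing the order of the exceptional fiber (equivalently, the cone point of the base orbifold, or how a nearby regular fiber winds around the singular fiber) forces $|k_1|=|k_2|$, with the $\pm1$ case and the orientation-reversing homeomorphism $M(k)\cong M(-k)$ handled separately just as you do.
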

\begin{proof}
``If'' part is clear. Note that if $k_1=-k_2$, the homoeomorphism may be orientation reversing. 
We prove ``only if'' part. 
Suppose that $M(k_1)$ and $M(k_2)$ are homeomorphic. 
If one of $k_1$ and $k_2$ is $\pm1$, then the manifold is $S^{1}\times S^{1}\times [0,1]$ and the other is also $\pm1$. 
If neither $k_1$ nor $k_2$ is $\pm1$, then the manifolds admit the unique fiber structures. 
Thus the homeomorphism sends regular and singular fibers to regular and singular fibers, respectively. 
Note that in $M(k_1)$, a regular fiber near the singular fiber spirals around the singular fiber $|k_1|$ times and that in $M(k_2)$, a regular fiber near the singular fiber spirals around the singular fiber $|k_2|$ times. 
Thus $|k_1|=|k_2|$ must hold. 
\end{proof}

Let $X_{p,q}$ be a manifold obtained by gluing $M(-q)$ and $M(p)$ as follows: 
Take base points and based loops in $M(-q)$ and $M(p)$ as in Figure~\ref{annuli}. 
In Figure~\ref{annuli}, $h_1$ and $h_2$ are based loops which are also regular fibers in $M(-q)$ and $M(p)$ pointing into the front side of this paper, and $T_1$, $T_2$, $\partial_{+}$ and $\partial_{-}$ are boundary tori. 
We can see that $\pi_{1}\left(M(-q)\right)=\left< \alpha_1, \beta_1, h_1 \mid [\alpha_1, h_1]=[\beta_1, h_1]=h_{1}{\alpha_{1}}^{-q}=1\right>$ and \\
$\pi_{1}\left(M(p)\right)=\left< \alpha_2, \beta_2, h_2 \mid [\alpha_2, h_2]=[\beta_2, h_2]=h_{2}{\alpha_{2}}^{p}=1\right>$. 
Paste $M(-q)$ and $M(p)$ along $T_1$ and $T_2$ so that the pair $(\beta_1, h_1)$ is identified with $(h_2, {\beta_2}^{-1})$. 
This resultant manifold is denoted by $X_{p,q}$. 
The boundary of $X_{p,q}$ coming from $\partial_{\pm}$ is also denoted by $\partial_{\pm}$. 
Note that $X_{p,q}$ is also irreducible and has incompressible boundary. 

\begin{figure}[htbp]
 \begin{center}
  \includegraphics[width=110mm]{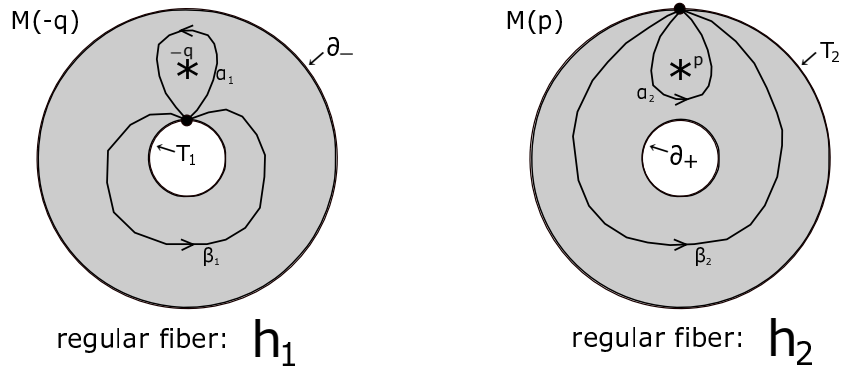}
 \end{center}
 \caption{$M(-q)$ and $M(p)$}
 \label{annuli}
\end{figure}

Set $G$, $G_{+}$ and $G_{-}$ to be the fundamental groups of $X_{p,q}$, $\partial _{+}$ and $\partial_{-}$, respectively. 
Let $j_{\pm}:G_{+}\longrightarrow G$ be (some) homomorphisms induced by the inclusions. 
Then we can compute and fix presentations of $G$ and $G_{\pm}$ as follows.
\begin{align}
G &= \left<\alpha_1, \beta_1, h_1, \alpha_2, \beta_2, h_2 \mid [\alpha_1, h_1]=[\beta_1, h_1]=h_{1}{\alpha_{1}}^{-q}=[\alpha_2, h_2]=[\beta_2, h_2]=h_{2}{\alpha_{2}}^{p}=\beta_{1}{h_{2}}^{-1}=\beta_{2}h_1=1 \right> \notag \\ 
   &= \left<\alpha_1, \alpha_2 \mid [{\alpha_1}^{q}, {\alpha_2}^p]=1 \right>  \\ \notag \\
G_{+}&=\left< x_{+}, y_{+} \mid [x_{+}, y_{+}]=1 \right>  \\ \notag \\
G_{-}&=\left< x_{-}, y_{-} \mid [x_{-}, y_{-}]=1 \right> 
\end{align}   

We take $x_{\pm}$ and $y_{\pm}$ so that $j_{+}(x_{+})=(\beta_{2}{\alpha_2}^{-1}=){\alpha_{1}}^{-q}{\alpha_2}^{-1}$, $j_{+}(y_{+})=(h_2=){\alpha_2}^{-p}$, $j_{-}(x_{-})=(\alpha_1\beta_1=)\alpha_1{\alpha_2}^{-p}$ and $j_{-}(y_{-})=(h_1=)\alpha_{1}^q$ hold. 

\begin{lem}\label{lemhomeoxpq}
$\overline{K_{p,q}(0)\setminus T}$ is homeomorphic to $X_{p,q}$. 
\end{lem}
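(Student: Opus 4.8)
The plan is to identify the two pieces $M(-q)$ and $M(p)$ glued together in $X_{p,q}$ with the two obvious pieces of $\overline{K_{p,q}(0)\setminus T}$ coming from the twist regions of the standard diagram, and then to check that the gluing data agree at the level of fundamental groups (together with peripheral structures) so that a homeomorphism can be built and recognized via Waldhausen-type rigidity. First I would observe that $\overline{K_{p,q}(0)\setminus T}$ is, by construction, the genus-two handlebody $H=\overline{S^3\setminus T'}$ with a $2$-handle attached along the longitude $[b^q,a^p]$; its fundamental group is $\Gamma_{p,q}=\langle a,b\mid [b^q,a^p]=1\rangle$, which is visibly isomorphic to the group $G=\langle \alpha_1,\alpha_2\mid [{\alpha_1}^{q},{\alpha_2}^{p}]=1\rangle$ via $a\mapsto \alpha_2$, $b\mapsto \alpha_1$ (or a similar matching). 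So the algebra already matches; the real content is the topology.

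Next I would give the geometric decomposition of $\overline{K_{p,q}(0)\setminus T}$ directly. The complement $H$ of the Seifert surface $T'$ deformation retracts in a way that exhibits two ``tube'' regions, one running through the block of $2p$ crossings and one through the block of $2q$ crossings; attaching the longitudinal $2$-handle and cutting the result appropriately should exhibit $\overline{K_{p,q}(0)\setminus T}$ as the union along a torus of two pieces, each of which is a solid torus with a $2$-handle attached to a curve winding $p$ (resp. $q$) times — equivalently $A\times S^1$ with a $p$- (resp. $(-q)$-) surgery on $\{*\}\times S^1$, i.e. exactly $M(p)$ and $M(-q)$. Concretely, I would locate an essential annulus in $H$ separating the two twist regions, thicken it, and track what the $2$-handle attachment does to each side; the fiber $h_i$ of each $M(\cdot)$ corresponds to the core of the respective tube, and the boundary tori $T_1,T_2$ to the two sides of the separating annulus. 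The point $*$ with its surgery coefficient records the number of half-twists, which is why we get $M(p)$ on one side and $M(-q)$ on the other.

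Then I would verify that the gluing identifications match those defining $X_{p,q}$: one must check that the isotopy class of the curve glued to the regular fiber on one side is the curve $\beta_1$ (resp. $h_2$) specified in Section~\ref{subseccomponents}, i.e. that $(\beta_1,h_1)$ is identified with $(h_2,{\beta_2}^{-1})$. This is a careful bookkeeping of the standard diagram of $K_{p,q}$ against Figure~\ref{annuli}: reading off, in $\pi_1$, the images of the peripheral generators $X_\pm,Y_\pm$ of $\Gamma_\pm$ under $\iota_\pm$ and comparing with the images of $x_\pm,y_\pm$ under $j_\pm$. The equalities $\iota_+(X_+)=a^p$, $\iota_+(Y_+)=b^{-q}a^{-1}$, $\iota_-(X_-)=b^{-1}a^p$, $\iota_-(Y_-)=b^{-q}$ should correspond, under $a\mapsto\alpha_2,\ b\mapsto\alpha_1$, to $j_+(x_+)={\alpha_1}^{-q}{\alpha_2}^{-1}$, $j_+(y_+)={\alpha_2}^{-p}$, $j_-(x_-)=\alpha_1{\alpha_2}^{-p}$, $j_-(y_-)={\alpha_1}^{q}$, up to the automorphisms of $\pi_1(T_\pm)$ induced by self-homeomorphisms; a few lines of algebra in the free product with amalgamation should confirm this. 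Finally, since both $\overline{K_{p,q}(0)\setminus T}$ and $X_{p,q}$ are irreducible $3$-manifolds with incompressible boundary (noted in the excerpt for both), and since we will have produced an isomorphism of fundamental groups respecting the peripheral structure, Waldhausen's theorem yields a homeomorphism.

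The main obstacle I expect is the second step: cleanly exhibiting $\overline{K_{p,q}(0)\setminus T}$ as the glued union of the two Seifert pieces $M(p)$ and $M(-q)$ with the correct coefficients and the correct gluing, rather than just matching groups. Getting the signs and the $p$ versus $-q$ right, and confirming that the separating annulus is essential and that the attaching curves of the $2$-handle land as claimed on each side, requires a careful look at the standard diagram and the Seifert surface $T'$; the algebraic check in the third step is then essentially a consistency verification. One should also double-check the orientation conventions so that it is genuinely $M(-q)$ and not $M(q)$ that appears (consistent with $K_{p,q}$ being the mirror of $K_{-p,-q}$).
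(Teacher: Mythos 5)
Your proposal is essentially the paper's proof: one writes down an explicit isomorphism between $\Gamma_{p,q}=\left<a,b\mid [b^{q},a^{p}]=1\right>$ and $G=\left<\alpha_1,\alpha_2\mid [{\alpha_1}^{q},{\alpha_2}^{p}]=1\right>$ (the paper takes $\alpha_1\mapsto b^{-1}$, $\alpha_2\mapsto a^{-1}$, composed with the automorphism $b\mapsto a^{-1}ba$ of $\Gamma$ so that the two peripheral diagrams commute on the nose), verifies that it carries $j_{\pm}(G_{\pm})$ to $\iota_{\pm}(\Gamma_{\pm})$ up to conjugation, and then invokes Waldhausen's theorem for irreducible Haken manifolds with incompressible boundary. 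Your second step --- directly exhibiting $\overline{K_{p,q}(0)\setminus T}$ as $M(p)$ and $M(-q)$ glued along an essential torus, which you rightly flag as the main obstacle --- is not carried out in the paper and is not needed: once the peripheral-structure-preserving isomorphism is in hand, Waldhausen does all the work.
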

\begin{proof}
Recall that $G$ is the fundamental group of $X_{p,q}$ and that $\Gamma$ is that of $\overline{K_{p,q}\setminus T}$. 
We use the presentation \eqref{gammapq} for $\Gamma$. 
Set $\phi$, $\phi_{+}$, $\phi_{-}$ and $f$ to be isomorphisms satisfying: 
\begin{itemize}
\item $\phi:G\longrightarrow \Gamma$ maps $\alpha_1$ and $\alpha_2$ to $b^{-1}$ and $a^{-1}$ respectively,
\item $\phi_{+}:G_{+}\longrightarrow \Gamma_{+}$ maps $x_{+}$ and $y_{+}$ to ${Y_{+}}^{-1}$ and $X_{+}$ respectively, 
\item $\phi_{-}:G_{-}\longrightarrow \Gamma_{-}$ maps $x_{-}$ and $y_{-}$ to $X_{-}$ and $Y_{-}$ respectively, and
\item $f:\Gamma \longrightarrow \Gamma$ maps $a$ and $b$ to $a$ and $a^{-1}ba$ respectively. 
\end{itemize}
Then the following two diagrams are commute, where $id_{\Gamma}$ denotes the identity map of $\Gamma$.
\[
 \xymatrix{
   \Gamma_{+} \ar[r]^{\iota_{+}} & \Gamma \ar[r]^{f}  & \Gamma &  & \Gamma_{-} \ar[r]^{\iota_{-}} & \Gamma \ar[r]^{id_{\Gamma}}  & \Gamma\\
   G_{+}\ar[u]^{\phi_{+}} \ar[r]^{j_{+}} &  G\ar[ur]_{\phi} & & &  G_{-}\ar[u]^{\phi_{-}} \ar[r]^{j_{-}} &  G\ar[ur]_{\phi} 
}
\]
Therefore, $\phi$ (with other isomorphisms) is an isomorphism preserving the peripheral structure. 
Note that $X_{p,q}$ and $\overline{K_{p,q}(0)\setminus T}$ are irreducible and that they have essential surfaces since 3-manifold whose boundary component other than spheres are not empty has an essential surface. 
By a result of Waldhausen \cite{waldhausen}, this isomorphism is induced by a homeomorphism between $X_{p,q}$ and $\overline{K_{p,q}(0)\setminus T}$. 
\end{proof}

\begin{lem}\label{lemnonseifert}
$X_{p,q}$ is not a Seifert manifold unless $p=\pm1$ or $q=\pm1$. 
\end{lem}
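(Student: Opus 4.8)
The plan is to use the structure of $X_{p,q}$ as a union of two Seifert pieces $M(-q)$ and $M(p)$ glued along a torus, and to show that this gluing is \emph{not} compatible with a single Seifert fibration when neither $p$ nor $q$ is $\pm 1$. First I would recall from Section~\ref{subseccomponents} that $M(-q)$ and $M(p)$ are Seifert manifolds, each with base orbifold an annulus carrying one exceptional fiber (of multiplicity $|q|$ and $|p|$ respectively), and that when $p\neq\pm1$ and $q\neq\pm1$ these fibrations are \emph{unique}. If $X_{p,q}$ were Seifert fibered, then by uniqueness of the JSJ-type splitting along the gluing torus $T_1\!\cong\!T_2$ (which is incompressible), one can ask whether the restriction of this hypothetical global fibration to each piece agrees with the given fibration. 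The key point is to compare the regular fibers: in $M(-q)$ the regular fiber on $T_1$ is the class $h_1$, while in $M(p)$ the regular fiber on $T_2$ is $h_2$, and under the identification $(\beta_1,h_1)\leftrightarrow(h_2,\beta_2^{-1})$ the fiber $h_1$ of the first piece is glued to $\beta_2^{-1}$, which is \emph{not} a multiple of the fiber $h_2$ of the second piece — the two slopes on the gluing torus are distinct (in fact they intersect once).

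The argument then proceeds as follows. Suppose for contradiction that $X_{p,q}$ carries a Seifert fibration $\mathcal{F}$. Restrict $\mathcal{F}$ to $M(p)$: this is a Seifert fibration of $M(p)$ (possibly after isotopy, since a fibration of the whole manifold restricts to one on each piece once we arrange the gluing torus to be vertical or horizontal — and a horizontal torus in a Seifert piece with boundary forces that piece to be $T^2\times I$ or a twisted $I$-bundle, which $M(p)$ is not for $p\neq\pm1$, so $T_2$ must be vertical in $\mathcal{F}|_{M(p)}$). By uniqueness, $\mathcal{F}|_{M(p)}$ is isotopic to the given fibration of $M(p)$, so the fiber of $\mathcal{F}$ on $T_2$ is $h_2$. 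Symmetrically, the fiber of $\mathcal{F}$ on $T_1$ is $h_1$. But $\mathcal{F}$ has a well-defined fiber slope on the single torus $T_1=T_2$, so we would need $h_1$ and $h_2$ to represent the same slope on this torus; since $h_1$ maps to $\beta_2^{-1}$ and $\langle h_2,\beta_2\rangle$ generates $\pi_1(T_2)\cong\mathbb{Z}^2$ with $h_2\neq\pm\beta_2$ in that slope sense, this is a contradiction. Hence $X_{p,q}$ is not Seifert fibered.

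The main obstacle I anticipate is handling the case where the gluing torus $T_1$ fails to be vertical in the hypothetical fibration $\mathcal{F}$ — i.e., ruling out horizontal (or more general) positions of $T_1$. Here the relevant fact is standard: if $M$ is a Seifert fibered space with nonempty boundary and $S\subset M$ is an incompressible torus, then $S$ is isotopic to either a vertical torus or a horizontal one, and horizontal tori only occur when $M$ fibers over an interval or circle base in a very restricted way — concretely $M(p)$ with $p\neq\pm1$ has a genuine exceptional fiber, so it admits no horizontal incompressible torus. Citing \cite{jaco} (or \cite{jacoshalen}) for this dichotomy disposes of the worry. A secondary point to check carefully is that the edge $I$-bundle/twisted cases don't sneak in and that the ``unique fibration'' statement quoted before Lemma~\ref{lemmk} is being applied correctly (it fails exactly for $M(\pm1)=T^2\times I$, which is why the hypothesis $p,q\neq\pm1$ is needed). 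Everything else is bookkeeping with the explicit presentations of $G$, $G_+$, $G_-$ already set up.
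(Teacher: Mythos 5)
Your argument is essentially the paper's: both assume a Seifert fibration on $X_{p,q}$, invoke the classification of incompressible tori in Seifert fibered manifolds with boundary to force the gluing torus $\tilde{T}$ to be vertical, then use uniqueness of the fibrations on $M(p)$ and $M(-q)$ (valid precisely because $p,q\neq\pm1$) to conclude the fiber slopes on the two sides of $\tilde{T}$ are $h_2$ and $h_1$, which disagree since $h_1$ is glued to $\beta_2^{-1}$. The only difference is that the paper explicitly eliminates the twisted-$I$-bundle alternative by an Euler characteristic count (forcing the base to be a M\"obius band, whose twisted $I$-bundle is a solid torus), whereas you flag this case and assert without proof that $M(p)$ is not such a bundle; that check is routine and your proof is otherwise sound.
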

\begin{proof}
Suppose that $p\neq \pm1$ and $q\neq \pm1$. 
Suppose that $X_{p,q}$ is a Seifert manifold for the contrary. 
$X_{p,q}$ has an incompressible torus $\tilde{T}$, which is coming from the attaching torus of $M(p)$ and $M(-q)$. 
It is known that (see \cite{jaco}) if $F$ is a connected incompressible surfaces in a Seifert manifold $M$, then either
\begin{itemize}
\item $F$ is a boundary parallel annulus, 
\item $M$ is a surface bundle over $S^1$ and $F$ is a fiber surface,
\item $F$ splits $M$ into two 3-manifolds $M_1$ and $M_2$, both of which are twisted $I$-bundles over non-orientable surfaces or
\item $F$ is annulus or torus and isotopic to the one foliated by Seifert fibers of $M$
\end{itemize}
holds. 
Apply this fact to $\tilde{T}$. 
Since $\tilde{T}$ is a closed torus and $X_{p,q}$ has boundary, the first and the second cannot hold. 
Torus $\tilde{T}$ splits $X_{p,q}$ into $M(p)$ and $M(-q)$. 
Note that the boundary of each split component consists of two tori. 
Suppose that $M(p)$ is a twisted $I$-bundle over a non-orientable surface $N$. 
Note that $N$ must have boundary since $M(p)$ has boundary. 
Then the Euler number of the boundary of $M(p)$ is twice as many as that of $N$. 
Thus $N$ is a M${\rm \ddot{o}}$bius band. 
However the twisted $I$-bundle over a M${\rm \ddot{o}}$bius band is a solid torus, not $M(p)$. 
Thus the third cannot hold. 
If $\tilde{T}$ can be foliated by Seifert fibers of $X_{p,q}$, then the fiber structure descends to the split components $M(p)$ and $M(-q)$, and the foliations on the cut ends coming from $\tilde{T}$ are identical. 
Since $p\neq \pm1$ and $q\neq \pm1$, these fiber structures are the same as the ones defined in the construction of $M(k)$. 
However, in the construction of $X_{p,q}$, the foliations on $\tilde{T}$ are not identical. 
Thus the fourth cannot hold, and we conclude that $X_{p,q}$ is not a Seifert manifold unless $p=\pm1$ or $q=\pm1$.
\end{proof}

\subsection{The JSJ-decompositions of 3-manifolds obtained by the $0$-surgeries along double twist knots}

\begin{prop}\label{propjsj}(The JSJ-decomposition of $K_{p,q}(0)$)\\
Let $p$ and $q$ be non-zero integers. 
The set of decomposing tori $\mathcal{T}$ of the JSJ-decomposition of $K_{p,q}(0)$ is
\begin{itemize}
\item empty if $(p,q)=(1,-1)$ or $(-1,1)$, and then $K_{p,q}(0)$ is a Seifert manifold, 
\item $\{T\}$, where $T$ is the result of capping off the Seifert surface $T'$ of $K_{p,q}$ in Figure~\ref{seifertsurf} if $p=\pm1$ or $q=\pm1$ and $(p,q)$ is neither $(1,-1)$ nor $(-1,1)$, and then the resulting component of splitting is homeomorphic to a Seifert manifold $M(pq)$, and 
\item $\{T,\tilde{T}\}$, where $\tilde{T}$ is the torus in $X_{p,q}$, homeomorpic to $\overline{K_{p,q}(0)\setminus T}$, splitting it into Seifert manifolds $M(p)$ and $M(-q)$ if $p\neq \pm1$ and $q\neq \pm1$. 
\end{itemize}
\end{prop}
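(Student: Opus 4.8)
The plan is to verify the three cases of the JSJ-decomposition directly, using the homeomorphism $\overline{K_{p,q}(0)\setminus T}\cong X_{p,q}$ from Lemma~\ref{lemhomeoxpq} together with the fact (from \cite{gabai}) that $K_{p,q}(0)$ is irreducible and $T$ is incompressible. First I would dispose of the easy case $(p,q)=(1,-1)$ (equivalently $(-1,1)$): here $K_{p,q}$ is the trefoil, and $K_{p,q}(0)$ is a well-known small Seifert fibered space (indeed the trefoil is a torus knot, so its $0$-surgery is Seifert fibered over $S^2$ with three exceptional fibers), hence atoroidal and $\mathcal{T}=\emptyset$. For the remaining two cases I would argue that the tori I propose genuinely realize the JSJ-decomposition by checking (i) the complementary pieces are Seifert or atoroidal, and (ii) no proper subcollection works — which, since the JSJ tori are unique, amounts to showing the pieces are not Seifert (so the tori cannot be removed) and are correctly identified.

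For the case $p=\pm1$ or $q=\pm1$ but $(p,q)\neq(1,-1),(-1,1)$: cutting along $T$ gives $\overline{K_{p,q}(0)\setminus T}\cong X_{p,q}$, and I would show this piece is the Seifert manifold $M(pq)$. When say $p=\pm1$, in the construction of $X_{p,q}$ the piece $M(p)=M(\pm1)\cong T^2\times[0,1]$ is just a product collar, so gluing it onto $M(-q)$ merely reparametrizes a boundary torus and yields a manifold homeomorphic to $M(-q)\cong M(pq)$ (using $pq=\mp q$ and Lemma~\ref{lemmk}, $|{-q}|=|pq|$); symmetrically if $q=\pm1$. Since $(p,q)$ is not $(\pm1,\mp1)$ we have $|pq|\neq 1$, so $M(pq)$ admits a unique Seifert structure and in particular is not $T^2\times[0,1]$; one must also check $K_{p,q}(0)$ itself is not Seifert (otherwise $\mathcal{T}$ would be empty), which follows because a Seifert manifold cannot contain a non-fiber incompressible torus of this type, or more simply because $M(pq)$ has a genuine exceptional fiber preventing the Seifert structure from extending across $T$ compatibly — the two foliations $T$ inherits from the two sides do not match. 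Hence $\mathcal{T}=\{T\}$.

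For the generic case $p\neq\pm1$ and $q\neq\pm1$: cut along $T$ to get $X_{p,q}$, then cut $X_{p,q}$ along its internal torus $\tilde T$ to get $M(p)$ and $M(-q)$, both Seifert by construction. By Lemma~\ref{lemnonseifert}, $X_{p,q}$ is not Seifert, so $\tilde T$ is a non-redundant JSJ torus of $X_{p,q}$; and $M(p),M(-q)$ are each Seifert hence not further decomposed. It remains to see $K_{p,q}(0)$ is not itself Seifert and that $T$ is non-redundant: a closed Seifert manifold containing the incompressible torus $T$ would induce a Seifert structure on $\overline{K_{p,q}(0)\setminus T}=X_{p,q}$, contradicting Lemma~\ref{lemnonseifert}. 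By uniqueness of the JSJ-decomposition, $\mathcal{T}=\{T,\tilde T\}$, and $K_{p,q}$ genus one guarantees $T$ is not parallel to $\tilde T$.

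I expect the main obstacle to be the middle case — carefully justifying that gluing $M(\pm1)$ onto $M(\mp q)$ produces exactly $M(pq)$ rather than some other manifold, i.e. keeping track of which curves are identified via the maps $j_\pm$ and confirming the surgery coefficient/regular-fiber data is preserved so that Lemma~\ref{lemmk} applies; and, throughout, the bookkeeping needed to certify that the incompressible torus involved is genuinely non-Seifert-fibered-compatibly rather than boundary-parallel or fiber-like, so that minimality of $\mathcal{T}$ holds.
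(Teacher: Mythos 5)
Your overall architecture matches the paper's (identify $\overline{K_{p,q}(0)\setminus T}$ with $X_{p,q}$ via Lemma~\ref{lemhomeoxpq}, then use Lemmas~\ref{lemmk} and~\ref{lemnonseifert}), and your generic case is essentially sound: if $K_{p,q}(0)$ were Seifert, the classification of incompressible tori in Seifert manifolds would force a Seifert structure on the cut-open piece $X_{p,q}$, contradicting Lemma~\ref{lemnonseifert}. But the paper's proof rests on one external input you do not use: the Ichihara--Jong theorem that a Montesinos knot admits a surgery yielding a \emph{toroidal Seifert} manifold only for the trefoil with slope $0$. Since $K_{p,q}(0)$ always contains the essential torus $T$, this immediately gives ``$K_{p,q}(0)$ is Seifert iff $(p,q)=\pm(1,-1)$'' in all cases at once. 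Your replacement arguments for the middle case have a genuine hole.

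Concretely: your claim that $(p,q)\neq(\pm1,\mp1)$ forces $|pq|\neq1$ is false --- $(p,q)=(1,1)$ and $(-1,-1)$ (the figure-eight knot) have $pq=1$, so $X_{p,q}\cong M(1)=T^2\times[0,1]$ has no exceptional fiber and no unique Seifert structure, and both of your proposed obstructions evaporate. For that knot $K_{1,1}(0)$ is an Anosov torus bundle (Sol geometry), and one must argue via the monodromy trace (or cite Ichihara--Jong) to see it is not Seifert yet still has $\mathcal{T}=\{T\}$. Even when $|pq|\geq 2$, the decisive step --- ruling out that $T$ is isotopic to a vertical torus of a putative Seifert structure, i.e.\ verifying that ``the two foliations $T$ inherits from the two sides do not match'' --- is asserted rather than proved; this is precisely the kind of explicit fiber-slope computation that the proof of Lemma~\ref{lemnonseifert} had to carry out for $\tilde T$, and it is the crux of the middle case, not a routine check. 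Two smaller corrections: the $0$-surgery on the trefoil is \emph{not} atoroidal (it is a Euclidean Seifert fibration over $S^2(2,3,6)$ containing horizontal incompressible tori); it is its Seifert-ness, not atoroidality, that forces $\mathcal{T}=\emptyset$. And non-parallelism of $T$ and $\tilde T$ follows because cutting along both yields $M(p)\sqcup M(-q)$ with neither piece a product $T^2\times[0,1]$, not from the genus of $K_{p,q}$.
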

\begin{proof}
In \cite{ichihara_dae}, it is proved that a Montesinos knot admits a Dehn surgery yielding a troidal Seifert 3-manifold if and only if the knot is trefoil and the surgery slope is $0$. 
Thus we know that $K_{p,q}(0)$ is a Seifert manifold and the set of decomposing tori is empty if $(p,q)=(1,-1)$ or $(-1,1)$ and a non-Seifert manifold otherwise. 
Assume that $(p,q)$ is neither $(1,-1)$ nor $(-1,1)$. 
Then $K_{p,q}(0)$ is a non-Seifert manifold and having essential torus $T$, and the set of decomposing tori must contain $T$. 
By Lemma~\ref{lemhomeoxpq}, the result of cutting $K_{p,q}(0)$ along $T$ is homeomorphic to $X_{p,q}$. 
This $X_{p,q}$ is a Seifert manifold homeomorphic to $M(pq)$ if $p=\pm1$ or $q=\pm1$, and otherwise a non-Seifert manifold by Lemma~\ref{lemnonseifert} and having essential torus $\tilde{T}$, which splits $X_{p,q}$ into two Seifert manifolds $M(p)$ and $M(-q)$. 
This completes the proof.
\end{proof}

Using the JSJ-decomposition above, we get the following:
\begin{prop}\label{prophomeo0surg}
Let $K_{p,q}$ and $K_{p',q'}$ be two double twist knots. 
Then $K_{p,q}(0)$ and $K_{p',q'}(0)$ are homeomorphic if and only if $(p',q')=(p,q)$, $(q,p)$, $(-p,-q)$ or $(-q,-p)$. 
\end{prop}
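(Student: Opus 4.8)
The plan is to prove the two implications separately: the ``if'' part directly from the symmetries of double twist knots recorded in Section~\ref{secdoubletwist}, and the ``only if'' part by comparing the JSJ-decompositions given by Proposition~\ref{propjsj} with the help of Lemma~\ref{lemmk}, and then removing a residual sign ambiguity by means of the Alexander polynomial. For the ``if'' direction I would recall that $K_{p,q}$ is isotopic to $K_{-q,-p}$ and that the mirror image of $K_{p,q}$ is both $K_{q,p}$ and $K_{-p,-q}$; since $0$-surgery is natural under isotopy and replacing a knot by its mirror replaces $K(0)$ by $K(0)$ with the opposite orientation, all four of $K_{p,q}$, $K_{q,p}$, $K_{-p,-q}$, $K_{-q,-p}$ give $3$-manifolds homeomorphic to $K_{p,q}(0)$, homeomorphisms being taken without regard to orientation.

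For the ``only if'' direction, suppose $K_{p,q}(0)\cong K_{p',q'}(0)$. Since the JSJ-decomposition is unique, the number of decomposing tori is a homeomorphism invariant, so by Proposition~\ref{propjsj} the pairs $(p,q)$ and $(p',q')$ fall into the same one of the three cases listed there: $\mathcal{T}=\emptyset$ (equivalently $(p,q)\in\{(1,-1),(-1,1)\}$), $|\mathcal{T}|=1$ (equivalently $p=\pm1$ or $q=\pm1$ but not the previous case), or $|\mathcal{T}|=2$. A homeomorphism carries the JSJ system to the JSJ system and hence matches up the complementary pieces, so the multiset of homeomorphism types of JSJ pieces is also a homeomorphism invariant. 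In the first case this already forces $(p',q')\in\{(1,-1),(-1,1)\}$, which is exactly the required list in that case. In the second case Proposition~\ref{propjsj} identifies the unique Seifert pieces as $M(pq)$ and $M(p'q')$, so Lemma~\ref{lemmk} yields $|pq|=|p'q'|$, while being in this case also forces $p'=\pm1$ or $q'=\pm1$. In the third case the unordered pairs of Seifert pieces $\{M(p),M(-q)\}$ and $\{M(p'),M(-q')\}$ coincide up to homeomorphism, so Lemma~\ref{lemmk} gives $\{|p|,|q|\}=\{|p'|,|q'|\}$.

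To remove the sign ambiguity left by the previous step — which as stated cannot tell $K_{p,q}(0)$ from $K_{p,-q}(0)$ — I would use that $H_{1}(K_{p,q}(0))\cong\mathbb{Z}$, so the Alexander polynomial of $K_{p,q}(0)$ is a well-defined (up to units) homeomorphism invariant. A routine Fox-calculus computation from \eqref{eqlinpresentation}, or a genus-one Seifert matrix of $K_{p,q}$, shows that it equals, up to units, the Laurent polynomial $pq\,t-(2pq+1)+pq\,t^{-1}$; as an element of $\mathbb{Z}[t^{\pm1}]$ up to units this determines $pq\in\mathbb{Z}$ exactly, for instance because the ratio of the middle coefficient to an extreme coefficient equals $-2-1/(pq)$. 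Hence $pq=p'q'$. Feeding $pq=p'q'$ back into the three cases — together with $(p',q')\in\{(1,-1),(-1,1)\}$, with ``$p'=\pm1$ or $q'=\pm1$'' and $|pq|=|p'q'|$, and with $\{|p|,|q|\}=\{|p'|,|q'|\}$, respectively — a short case analysis on signs then forces $(p',q')\in\{(p,q),(q,p),(-p,-q),(-q,-p)\}$ in all cases, completing the proof.

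The main obstacle is the ``only if'' direction, and within it two substantive points. First, one must make precise that a homeomorphism really does respect the JSJ-decomposition and match the Seifert pieces with the multiplicities claimed in Proposition~\ref{propjsj}; this rests on uniqueness of the JSJ system together with the explicit description in Proposition~\ref{propjsj} (which in turn uses Lemma~\ref{lemnonseifert}). Second, one must recognize that the JSJ data alone is insufficient, because $M(k)\cong M(-k)$ by Lemma~\ref{lemmk}, so a homological invariant is unavoidable, and the Alexander polynomial is precisely what supplies the missing sign of $pq$. One could alternatively recover that sign from the gluing maps along the decomposing tori, but the homological route is cleaner to state.
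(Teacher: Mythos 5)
Your proposal is correct and follows essentially the same route as the paper's proof: the ``if'' part from the symmetries $K_{p,q}\simeq K_{-q,-p}$ and mirroring, and the ``only if'' part by combining the case analysis of the JSJ-decomposition in Proposition~\ref{propjsj} with Lemma~\ref{lemmk} and the Alexander polynomial $-pqt^{2}+(2pq+1)t-pq$ to pin down $pq=p'q'$. The only difference is cosmetic (the paper extracts $pq=p'q'$ first and then runs the JSJ case analysis, whereas you do it in the opposite order).
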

\begin{proof}
``If'' part is clear since $K_{p',q'}$ is isotopic to $K_{p,q}$ or its mirror in such condition of $(p',q')$. 
We prove ``only if'' part.
Since the $0$-surgery of a knot determines the Alexander polynomial of the knot and the Alexander polynomial $\Delta_{K_{p,q}}(t)$ of $K_{p,q}$ is $-pqt^{2}+(2pq+1)t-pq$, the equation $pq=p'q'$ must hold. 
If $(p,q)$ is $(1,-1)$ or $(-1,1)$, the set of decomposing tori of $K_{p,q}(0)$ is empty. 
This also holds for that of $K_{p',q'}(0)$. 
By Proposition~\ref{propjsj}, this implies that $(p',q')=(1,-1)$ or $(-1,1)$. 
We assume that $(p,q)$ is neither $(1,-1)$ nor $(-1,1)$. 
If $p=\pm1$ or $q=\pm1$, the set of decomposing tori consists of one element. 
This also holds for that of $K_{p',q'}(0)$. 
By Proposition~\ref{propjsj}, this implies that $p'=\pm1$ or $q'=\pm1$. 
Combining this with the equation $pq=p'q'$, we conclude that $(p',q')=(p,q)$, $(q,p)$, $(-p,-q)$ or $(-q,-p)$. 
Assume that $p\neq \pm1$ and $q\neq \pm1$ next. 
Then the set of decomposing tori for $K_{p,q}(0)$ consists of two tori and this also holds for that of $K_{p',q'}(0)$. 
Moreover, the sets $\{M(p),M(-q)\}$ and $\{M(p'), M(-q')\}$ of the resulting pieces of splitting must identical. 
By Lemma~\ref{lemmk}, $p=\pm p'$ or $\pm q'$. 
Combining this with the equation $pq=p'q'$, we conclude that $(p',q')=(p,q)$, $(q,p)$, $(-p,-q)$ or $(-q,-p)$.
\end{proof}

\begin{rmk}
There are many inequivalent knots which produce homeomorphic 3-manifolds by the $0$-surgeries \cite{brakes} \cite{osoinach} \cite{yasui} \cite{manolescupiccirillo}. 
On the other hand, there are many knots such that the $0$-surgery along them never homeomorphic to those along inequivalent knots \cite{baldwinsiviek1} \cite{baldwinsiviek2}. 
In these cases, it is said that {\it $0$ is the characterizing slope for the knot}. 
Note that the proposition above does not state that $0$ is the characterizing slope for a double twist knot. 
There may be a knot in a class of knots other than double twist knots a surgery along which produces 3-manifold homeomorphic to the 3-manifold obtained by the $0$-surgery along a double twist knot. 
\end{rmk}

\section{A proof of Theorem~\ref{mainthm}}\label{secprfofthm}
Take two pair of non-zero integers $(p,q)$ and $(p',q')$ such that $pq=p'q'$ and $p\neq \pm p', \pm q'$. 
Recall that $K_{p,q}$ is the mirror of $K_{-p,-q}$. 
And note that the knot group of a knot is isomorphic to that of the mirror of the knot and that this isomorphism extends to an isomorphism between the fundamental groups of the 3-manifolds obtained by the $0$-surgeries along the knot and its mirror. 
Thus we assume that $p$ and $p'$ are positive. 
We use the presentations of type \eqref{anotherpresentation2} for $G_{p,q}$ and $G_{p',q'}$, the fundamental groups of $K_{p,q}(0)$ and $K_{p',q'}(0)$ respectively. 
As stated at Lemma~\ref{lemcandidate}, $[xy,yx]$ is a generalized torsion element in $G_{p,q}$ unless $[xy,yx]=1$ holds. 
Suppose that $[xy,yx]=1$ holds in $G_{p,q}$ i.e $xy$ and $yx$ commute in $G_{p,q}$. 
Then we can deform the presentation \eqref{anotherpresentation2} as follows. 
Note that the second relation of the presentation \eqref{anotherpresentation2} follows from $[xy,yx]=1$ by Lemma~\ref{lemcandidate}. 
\begin{align}
 G_{p,q} &= \left<x, y \mid \{ (yx)^{q}(xy)^{-q})\}^{p}  y=x^{-1}\{ (yx)^{q}(xy)^{-q})\}^{p}, \ \{ (yx)^{-q}(xy)^{q})\}^{p}\{ (yx)^{q}(xy)^{-q})\}^{p}=1 \right> \notag \\
           &=\left<x, y \mid \{ (yx)^{q}(xy)^{-q})\}^{p}  y=x^{-1}\{ (yx)^{q}(xy)^{-q})\}^{p}, \ \{ (yx)^{-q}(xy)^{q})\}^{p}\{ (yx)^{q}(xy)^{-q})\}^{p}=1,\ [xy,yx]=1 \right> \notag \\
           &=\left<x, y \mid \{ (yx)^{q}(xy)^{-q})\}^{p}  y=x^{-1}\{ (yx)^{q}(xy)^{-q})\}^{p}, \ [xy,yx]=1 \right> \notag \\
           &=\left<x, y \mid \{ (yx)(xy)^{-1})\}^{pq}  y=x^{-1}\{ (yx)(xy)^{-1})\}^{pq}, \ [xy,yx]=1 \right> \notag
\end{align}
Similarly, if $[xy,yx]=1$ holds in $G_{p',q'}$, then we have the following presentation. 
\begin{align}
 G_{p',q'} &=\left<x, y \mid \{ (yx)(xy)^{-1})\}^{p'q'}  y=x^{-1}\{ (yx)(xy)^{-1})\}^{p'q'}, \ [xy,yx]=1 \right> \notag
\end{align}
Therefore, if $[xy,yx]=1$ holds simultaneously in $G_{p,q}$ and $G_{p',q'}$, then $G_{p,q}$ and $G_{p',q'}$ are isomorphic. 
Note that $K_{p,q}(0)$ and $K_{p',q'}(0)$ is irreducible and have essential surfaces, which are obtained by capping off a Seifert surfaces of minimal genus. 
By a result of Waldhausen \cite{waldhausen} (for closed 3-manifold), stating that every isomorphism between the fundamental groups of closed 3-manifolds which have essential surfaces is induced by a homeomorphism between the 3-manifolds, we know that $K_{p,q}(0)$ and $K_{p',q'}(0)$ are homeomorphic. 
However, this is impossible in our condition of $(p,q)$ and $(p',q')$ by Proposition~\ref{prophomeo0surg}. 
Therefore, $[xy,yx]\neq 1$ in at least one of $G_{p,q}$ and $G_{p',q'}$. 

\begin{rmk}
As one can see in the proof above, we say that $[xy,yx]$ is not a generalized torsion element in at most one of $G_{p,q}$ and $G_{p',q'}$. 
There may be another generalized torsion element in $G_{p,q}$ even if $[xy,yx]$ is not a generalized torsion element. 
Later we construct an (at most) infinite family of elements in $G_{p,q}$. 
And show that if $G_{p,q}$ admits a generalized torsion element, then $G_{p,q}$ admits a torsion element or admits a generalized torsion element in the constructed family. 
For the family of elements, see the last paragraph of Section~\ref{secprfofthm2} for $pq<0$ and Remark~\ref{candidates} for $pq>0$. 
\end{rmk}

\section{A proof of Theorem~\ref{mainthm2}}\label{secprfofthm2}
In this section, we search a generalized torsion element in the fundamental group $G_{p,-q}$ of the 3-manifold $K_{p,-q}(0)$ obtained by the $0$-surgery along a double twist knot $K_{p,-q}$ for positive integers $p$ and $q$. 
For $p=1$, the generalized torsion element in the knot group of $K_{1,-q}$ constructed in \cite{teragaito1} remains to be a generalized torsion element after the $0$-surgery. 
Though we cannot find a generalized torsion element in the knot group of $K_{p,-q}$ for general $p$ so far, we can see that there exists a generalized torsion element in $G_{p,-q}$. 
We use the presentation (\ref{stdrep}) for $G_{p,-q}$. 
\begin{align}
 G_{p,-q}= \pi_{1}\left(K_{p,-q}(0)\right) = \left<a, b, t \mid ta^{p}t^{-1}=b^{-1}a^{p}, \ tb^{q}a^{-1}t^{-1}=b^{q}, \  [b^{-q},a^{p}]=1 \right> \notag
\end{align}
By the third of Lemma~\ref{lemitomotegiteragaito}, we have $1=[b^{-q},a^{p}]\in \left< \left< [b^{-1},a]\right> \right>^{+}$ in $G_{p,-q}$. 
Thus $[b^{-1},a]$ is a generalized torsion element in $G_{p,-q}$ unless $[b^{-1},a]=1$ holds i.e $a$ and $b$ commute in $G_{p,-q}$. 
In the following, we suppose that $a$ and $b$ commute in $G_{p,-q}$ and prove that $a$ (and $b$) are generalized torsion elements in $G_{p,-q}$ under this assumption. 

Under the assumption, we have $ta^{p}t^{-1}=a^{p}b^{-1}$ and $ta^{-p}b^{pq}t^{-1}=t\left( a^{-1}b^{q}\right)^{p}t^{-1}=b^{pq}$. 
Thus ``informally'', the representation matrix $A$ of the action of $t(\cdot)t^{-1}$ on the subgroup of $G_{p,-q}$ generated by $a$ and $b$ under a basis $\{a,b\}$ is 
$A=\left( \begin{array}{cc}
     1   &  \frac{1}{q}  \\
      -\frac{1}{p}   &  1-\frac{1}{pq} \\
  \end{array} \right)$. 
Note that since we have $a^{p}=t^{-1}b^{-1}a^{p}t$ and $b^{q}a^{-1}=t^{-1}b^{q}t$ from the presentation, the representation matrix of the action of $t^{-1}(\cdot)t$ on this group under the same basis is 
$A^{-1}=\left( \begin{array}{cc}
     1 -\frac{1}{pq}   &  -\frac{1}{q}  \\
      \frac{1}{p}   &  1\\
  \end{array} \right)$. 
The word ``informally'' is used because the result of the action $t(\cdot)t^{-1}$ is not necessary closed in the subgroup generated by $a$ and $b$. 
However, for an element of the subgroup whose exponents of both of $a$ and $b$ are multiples of $pq$, the result of the action $t(\cdot)t^{-1}$ is in the subgroup: 
For $a^{(pq)s}b^{(pq)l}$ where $s$ and $l$ are integers, we have 
\begin{align}
t\left( a^{(pq)s}b^{(pq)l}\right)t^{-1}&=t\left( a^{pqs+pl}\right)t^{-1}\cdot t\left( a^{-pl}b^{pql}\right)t^{-1} \notag \\
     &= t\left( a^{p} \right)^{qs+l}t^{-1}\cdot t\left( a^{-1}b^{q} \right)^{pl}t^{-1} \notag \\
     &= \left(a^{p}b^{-1} \right)^{qs+l}\cdot \left( b^{q} \right)^{pl} \notag \\
     &= a^{pqs+pl}b^{-qs+(pq-1)l} \notag
\end{align}
This is represented as 
$A
\begin{pmatrix}
 (pq)s\\(pq)l
\end{pmatrix}
=\left( \begin{array}{cc}
     1   &  \frac{1}{q}  \\
      -\frac{1}{p}   &  1-\frac{1}{pq} \\
  \end{array} \right)\begin{pmatrix}
 (pq)s\\(pq)l
\end{pmatrix}
=\begin{pmatrix}
 pqs+pl\\-qs+(pq-1)l
\end{pmatrix}$. 
Similarly, for non-negative integer $k$, the exponents of $a$ and $b$ in $t^{k}\left( a^{(pq)^{k}s}b^{(pq)^{k}l}\right)t^{-k}$ are obtained from $A^{k}
\begin{pmatrix}
 (pq)^{k}s\\(pq)^{k}l
\end{pmatrix}$ 
and those of $t^{-k}\left( a^{(pq)^{k}s}b^{(pq)^{k}l}\right)t^{k}$ are obtained from $A^{-k}
\begin{pmatrix}
 (pq)^{k}s\\(pq)^{k}l
\end{pmatrix}$ .

We use the following Lemma: 

\begin{lem}\label{matrix}
Let $M\in GL_{2}(\mathbb{R})$ be a matrix such that one of whose eigenvalues $\lambda$ is in $\mathbb{C}\setminus \{1\}$, has length $\left| \lambda \right|$ one and the real part ${\rm Re}(\lambda)$ of $\lambda$ is in $\mathbb{Q}$. 
Then there exist positive integers $k$, $n$ and $m$ such that $nM^{k}+mI_{2}+nM^{-k}=O_{2}$ holds, where $I_2$ and $O_2$ are the $2\times2$ identity and zero matrices, respectively. 
\end{lem}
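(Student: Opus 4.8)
The plan is to exploit the fact that an eigenvalue $\lambda$ on the unit circle with rational real part forces $\lambda$ to be a root of unity, so that the powers $M^k$ become periodic in a strong sense. First I would diagonalize the situation over $\mathbb{C}$: since $\det M = \lambda\bar\lambda = 1$ (here $\bar\lambda$ is the complex conjugate, which is the other eigenvalue because $M$ is real and $|\lambda|=1$ with $\lambda\notin\mathbb{R}$, or else both eigenvalues are real and lie in $\{\pm1\}$), the characteristic polynomial of $M$ is $X^2 - cX + 1$ where $c = 2\,{\rm Re}(\lambda) \in \mathbb{Q}$. Writing $\lambda = e^{i\theta}$, we have $2\cos\theta = c \in \mathbb{Q}$. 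By the classical Niven theorem, the only rational values of $2\cos\theta$ for which $\theta$ is a rational multiple of $\pi$ are $0,\pm1,\pm2$; but in fact what I need is the converse direction, which is cheaper: I claim $\lambda$ is a root of unity. Indeed, $\lambda$ satisfies the monic integer-coefficient-after-clearing-denominators relation, and a classical Kronecker-type argument shows that an algebraic number all of whose conjugates have absolute value $1$ and which — together with its minimal polynomial's behavior — ... more directly, since $c\in\mathbb{Q}$ and $|\lambda|=1$, $\lambda$ is an algebraic number of degree $\le 2$ all of whose Galois conjugates lie on the unit circle, hence by Kronecker's theorem $\lambda$ is a root of unity.

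Once $\lambda = e^{2\pi i r/s}$ is a root of unity of order $s$ (with $s \ge 2$ since $\lambda\neq 1$), I would take $k$ to be a multiple of $s$ chosen so that $M^k$ has eigenvalue $\lambda^k = 1$ with multiplicity two. Then $M^k$ is unipotent: $M^k = I_2 + N$ with $N$ nilpotent, $N^2 = O_2$. The catch is that $M^k$ need not equal $I_2$ — it could be a nontrivial unipotent. So $M^{-k} = I_2 - N$, and hence
\begin{align}
M^k + M^{-k} = 2I_2. \notag
\end{align}
This already gives $M^k - 2I_2 + M^{-k} = O_2$, but with the wrong sign on the middle term (and coefficient $2$, not a free $m$). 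Wait — rereading the statement, I want $nM^k + mI_2 + nM^{-k} = O_2$ with all of $n,m$ \emph{positive}, so I cannot use $n = -1$. The resolution: if $M^k$ is genuinely unipotent and not the identity, the relation $M^k + M^{-k} = 2I_2$ forces $n\cdot 2 + m = 0$ with $n,m>0$, which is impossible — so I should instead choose $k$ so that $M^k$ has eigenvalue a \emph{primitive} root of unity of order exactly $2$, $3$, $4$, or $6$, i.e. so that $\lambda^k$ is a root of $X^2 - c'X + 1$ with $c' \in \{-2,-1,0,1\}$ but $\lambda^k \ne 1$. Concretely, pick $k$ with $\lambda^k$ of order $2$, so $\lambda^k = -1$ and $M^k = -I_2$ (if $M$ is diagonalizable — which it is here, since $\lambda\ne\bar\lambda$), giving $M^k + M^{-k} = -2I_2$, hence $1\cdot M^k + 2\cdot I_2 + 1\cdot M^{-k} = O_2$ with $n=m=1$... no, $n=1,m=2$. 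If the order of $\lambda$ is odd there is no $k$ making $\lambda^k = -1$; then I pick $k$ with $\lambda^k$ of order $3$, so $\lambda^k + \lambda^{-k} = -1$, $M^k + M^{-k} = -I_2$, and $1\cdot M^k + 1\cdot I_2 + 1\cdot M^{-k} = O_2$. In all cases: the minimal polynomial of $\lambda^k$ over $\mathbb{Q}$ is $X^2 - c'X + 1$ with $c' = 2\cos(2\pi/d)$, $d\in\{2,3,4,6\}$, so $c'\in\{-2,-1,0,1\}$, hence $M^{2k} - c' M^k + I_2 = O_2$, i.e. $M^k + M^{-k} = c' I_2$, i.e. $1\cdot M^k + (-c')\cdot I_2 + 1\cdot M^{-k} = O_2$ with $-c'\in\{-1,0,1,2\}$. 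When $-c'\ge 1$ we are done with $n=1$, $m=-c'$.

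The main obstacle is precisely the positivity constraint on $m$: we must rule out or handle the cases $-c' \le 0$, i.e. $c' \in \{1, 2\}$, corresponding to $\lambda^k$ of order $6$ or $\lambda^k = 1$. The order-$1$ case was the unipotent/identity issue above and is avoided by insisting $\lambda^k \ne 1$ (always possible since $s\ge 2$: take $k = s/\gcd(s, \text{stuff})$ tuned to land on the order-$2$ or order-$3$ power). The order-$6$ case $c'=1$ gives $M^k + M^{-k} = I_2$, forcing $n + m + n\cdot$... $2n + m = 0$, impossible for positive $n,m$ — so one must avoid it too: if $\lambda$ has order $s$, then some power $\lambda^k$ has order $2$ (when $s$ even) or order $3$ (when $3\mid s$); the only orders $s$ with neither property are $s\in\{1\}$ and odd $s$ not divisible by $3$. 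Hmm — so I will need to handle odd $s$ coprime to $3$ as well: there $\lambda^k$ can have any odd order dividing $s$, none equal to $1,3$. But recall $c = 2\cos(2\pi r/s)\in\mathbb{Q}$, and by Niven's theorem the \emph{only} $s$ with $2\cos(2\pi/s)\in\mathbb{Q}$ are $s\in\{1,2,3,4,6\}$ — in particular every admissible $s>1$ is divisible by $2$ or $3$, so a suitable power $\lambda^k$ of order $2$ or $3$ exists and the construction goes through. I would therefore front-load Niven's theorem to pin down $s \in \{2,3,4,6\}$ at the outset, then in each of these four cases exhibit $k$, $n=1$, and $m \in \{1,2\}$ explicitly by the computation $M^k + M^{-k} = c' I_2$ above, using diagonalizability of $M$ (guaranteed by distinct eigenvalues, or by the hypothesis that $M$ is nontrivial unipotent being excluded since $\lambda\ne 1$).
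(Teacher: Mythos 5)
There is a fatal gap at the very first step: you claim that $|\lambda|=1$ together with ${\rm Re}(\lambda)\in\mathbb{Q}$ forces $\lambda$ to be a root of unity, and you justify this by Kronecker's theorem. But Kronecker's theorem applies only to algebraic \emph{integers}. Here $\lambda$ satisfies $X^{2}-2{\rm Re}(\lambda)X+1=0$, and when $2{\rm Re}(\lambda)$ is a non-integral rational this polynomial does not clear to a monic polynomial over $\mathbb{Z}$, so $\lambda$ is not an algebraic integer and Kronecker says nothing. In fact Niven's theorem, which you invoke later, proves the exact opposite of what you need: it says that if $\theta$ is a rational multiple of $\pi$ and $\cos\theta\in\mathbb{Q}$ then $\cos\theta\in\{0,\pm\tfrac12,\pm1\}$; hence whenever ${\rm Re}(\lambda)\notin\{0,\pm\tfrac12,\pm1\}$ the number $\lambda=e^{i\theta}$ is \emph{not} a root of unity. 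A concrete counterexample to your premise is $\lambda=\tfrac{3+i\sqrt{7}}{4}$, which has modulus $1$ and rational real part $\tfrac34$ but infinite multiplicative order --- and this is precisely the eigenvalue to which the paper applies the lemma (it is $\tfrac{2pq-1}{2pq}+\tfrac{\sqrt{4pq-1}}{2pq}\sqrt{-1}$ for $pq=2$). So your case analysis over $s\in\{2,3,4,6\}$ covers only the degenerate instances of the lemma and misses the entire generic case that the paper actually needs.

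The paper's proof avoids roots of unity altogether. One first finds a positive integer $k$ with ${\rm Re}(\lambda^{k})<0$; this is elementary (if $\theta\in(0,\pi/2]$, the least $k$ with $k\theta>\pi/2$ satisfies $k\theta\le\pi$, so $\cos(k\theta)<0$; no density or periodicity is required). Then one observes that ${\rm Re}(\lambda^{k})=T_{k}({\rm Re}(\lambda))$ where $T_{k}$ is the $k$-th Chebyshev polynomial, which has integer coefficients, so ${\rm Re}(\lambda^{k})$ is again rational; writing $\tfrac{m}{n}=-2{\rm Re}(\lambda^{k})$ with $m,n$ positive integers and evaluating $nM^{k}+mI_{2}+nM^{-k}$ on the two independent eigenvectors of $\lambda$ and $\bar\lambda=\lambda^{-1}$ gives the zero matrix. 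If you want to salvage your write-up, replace the root-of-unity reduction by this Chebyshev argument; the positivity of $m$ then comes for free from ${\rm Re}(\lambda^{k})<0$ rather than from a case analysis on the order of $\lambda$.
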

\begin{proof}
We regard $M$ as in $GL_{2}(\mathbb{C})$ through the natural inclusion. 
Since the characteristic polynomial of $M$ has real coefficients, the other eigenvalue is the conjugate of $\lambda$ and this is $\lambda^{-1}$ since $|\lambda|=1$. 
Let $\mathcal{V}$ and $\mathcal{V}'$ be eigenvectors of $\lambda$ and $\lambda^{-1}$, respectively. 
Note that $\mathcal{V}$ and $\mathcal{V}'$ are linearly independent. 
Since $\lambda \neq 1$ and $\left| \lambda \right|=1$, there is some positive integer $k$ such that ${\rm Re}(\lambda^{k})<0$. 
Note that ${\rm Re}(\lambda^{-k})={\rm Re}(\lambda^{k})$. 
Moreover, since $\left| \lambda \right|=1$, ${\rm Re}(\lambda^{k})=T_{k}\left({\rm Re}(\lambda)\right)$ holds, where $T_{k}(\cdot)$ is the $k$-th Chebyshev polynomial. 
We see that ${\rm Re}(\lambda^{k})$ is in $\mathbb{Q}$ since ${\rm Re}(\lambda)$ also is in and $T_{k}$ has integer coefficients. 
Thus there exist positive integers $m$ and $n$ such that $\frac{m}{n}=-2{\rm Re}(\lambda^{k})$. 
Then 
\begin{align}
\left(nM^{k}+nM^{-k}\right)\mathcal{V}=\left( n\lambda^{k}+n\lambda^{-k} \right)\mathcal{V}=2n{\rm Re}(\lambda^k)\mathcal{V}=-m\mathcal{V}=-mI_{2}\mathcal{V}. \notag
\end{align}
Similarly, the equation obtained by replacing $\mathcal{V}$ in the above equation with $\mathcal{V}'$ also holds with the same $k$, $m$ and $n$. 
Since $\mathcal{V}$ and $\mathcal{V}'$ span $\mathbb{C}^2$, the equation $nM^{k}+mI_{2}+nM^{-k}=O_{2}$ holds. 
\end{proof}

One of the eigenvalues of $A$ is $\frac{2pq-1}{2pq}+\frac{\sqrt{4pq-1}}{2pq}\sqrt{-1}$, which is a complex number of length one and whose real part is rational. 
Hence we can apply Lemma~\ref{matrix} and we get positive integers $k$, $n$ and $m$ such that $nA^{k}+mI_{2}+nA^{-k}=O_{2}$ holds. 
Using this equation, we have:
\begin{align}
\left(nA^{k}+mI_{2}+nA^{-k}\right) 
\begin{pmatrix}
 (pq)^{k}\\0
\end{pmatrix}=
\begin{pmatrix}
 0\\0
\end{pmatrix}
\Longleftrightarrow \left(t^{k}a^{(pq)^{k}}t^{-k}\right)^{n}\cdot \left(a^{(pq)^k}\right)^{m} \cdot \left(t^{-k}a^{(pq)^{k}}t^{k}\right)^{n}=1. \notag
\end{align}

This implies that $1\in \left<\left< a\right>\right>^{+}$. 
This $a$ is actually a generalized torsion element. 
For a contradiction, suppose that $a=1$ holds in $G_{p,-q}$. 
Then we have $G_{p,-q}\cong \mathbb{Z}$ from the presentation at the beginning of this section. 
However, $G_{p,-q}=\pi_{1}\left( K_{p,-q}(0)\right)$ has $\mathbb{Z}^{2}$ as a subgroup since $K_{p,-q}(0)$ has an incompressible torus, which is obtained by capping off a Seifert surface of minimal genus. 
This leads a contradiction. 
Therefore, $a$ is a generalized torsion element in $G_{p,-q}$ under the assumption of the commutativity of $a$ and $b$. 
Note that by the same argument, we see that $b$ is also a generalized torsion element in $G_{p,-q}$ under the assumption.

As a result, at least one of $a$ and $[b^{-1},a]$ is a generalized torsion element in $G_{p,-q}=\pi_{1}\left( K_{p,-q}(0)\right)$. 
This finishes a proof of Theorem~\ref{mainthm2}.

\section{A proof of Theorem~\ref{mainthm3}} \label{secprfofthm3}
For negative $pq$, the group $G_{p,q}$ admits a generalized torsion element, and thus is not bi-orderable as in Theorem~\ref{mainthm2}. 
In the following, we assume $p$ and $q$ are positive and $G_{p,q}$ admits no generalized torsion elements. 
We will give a bi-order. 
The following transformation of a presentation of $\pi_{1}(E_{p,q})$ indicates the isomorphisms among $\pi_{1}(E_{p,q})$, $\pi_{1}(E_{-q,-p})$ and $\pi_{1}(E_{q,p})$. 
We give some of the presentations names for readability, although some of these already are given names before. 
This transformation holds even if we do not impose the positivity of $p$ and $q$:
\begin{align}
  \pi_{1}(E_{p,q})&= \left<x, y \mid \{ (yx)^{q}(xy)^{-q})\}^{p}  y=x^{-1}\{ (yx)^{q}(xy)^{-q})\}^{p} \right> \label{bio1} \\
   &=  \left<x, y,b,t \mid \{ (yx)^{q}(xy)^{-q})\}^{p}  y=x^{-1}\{ (yx)^{q}(xy)^{-q})\}^{p},\ b=xy,\  t=y^{-1} \right> \notag \\
   &= \left<b, t \mid t(t^{-1}b^{q}tb^{-q})^{p}t^{-1}=b^{-1}(t^{-1}b^{q}tb^{-q})^{p} \right> \notag \\
    &= \left<a, b, t \mid t(t^{-1}b^{q}tb^{-q})^{p}t^{-1}=b^{-1}(t^{-1}b^{q}tb^{-q})^{p},\ a=t^{-1}b^{q}tb^{-q} \right> \notag \\
    &= \left<a, b, t \mid ta^{p}t^{-1}=b^{-1}a^{p}, \ tb^{-q}a^{-1}t^{-1}=b^{-q}  \right> \label{bio2} \\
    &= \left<a, b, t, A, B, T \mid ta^{p}t^{-1}=b^{-1}a^{p}, \ tb^{-q}a^{-1}t^{-1}=b^{-q},\ A=b^{-1}, \ B=a^{-1},\ T=t^{-1}  \right> \notag  \\
    &= \left<A, B, T \mid T^{-1}B^{-p}T=AB^{-p},\ T^{-1}A^{q}BT=A^{q}  \right> \notag \\
    &= \left<A, B, T \mid TA^{(-q)}T^{-1}=B^{-1}A^{(-q)},\ TB^{-(-p)}A^{-1}T^{-1}=B^{-(-p)}  \right> \label{bio3} \\
     &= \left< B, T \mid \{T^{-1}B^{(-p)}TB^{-(-p)}\}^{-q}T^{-1}=T^{-1}B^{-1} \{T^{-1}B^{(-p)}TB^{-(-p)}\}^{-q} \right> \notag \\
     &= \left< B, T, X, Y \mid \{T^{-1}B^{(-p)}TB^{-(-p)}\}^{-q}T^{-1}=T^{-1}B^{-1} \{T^{-1}B^{(-p)}TB^{-(-p)}\}^{-q},\ X=BT,\ Y=T^{-1} \right> \notag \\
     &= \left< X, Y \mid \{(YX)^{-p}(XY)^{-(-p)}\}^{-q}Y=X^{-1}\{(YX)^{-p}(XY)^{-(-p)}\}^{-q} \right> \label{bio4} \\
     &= \left< X, Y \mid Y\{(YX)^{-p}(XY)^{p}\}^{q}=\{(YX)^{-p}(XY)^{p}\}^{q}X^{-1} \right> \notag \\
     &= \left< X, Y, \mathbb{X}, \mathbb{Y} \mid Y\{(YX)^{-p}(XY)^{p}\}^{q}=\{(YX)^{-p}(XY)^{p}\}^{q}X^{-1} ,\ \mathbb{X}=Y^{-1},\ \mathbb{Y}=X^{-1} \right> \notag \\
     &= \left<\mathbb{X}, \mathbb{Y} \mid \{(\mathbb{YX})^{p}(\mathbb{XY})^{-p}\}^{q}\mathbb{Y}=\mathbb{X}^{-1}\{(\mathbb{YX})^{p}(\mathbb{XY})^{-p}\}^{q} \right> \label{bio5} \\
     &= \left<\mathbb{X}, \mathbb{Y}, \mathbb{B}, \mathbb{T} \mid \{(\mathbb{YX})^{p}(\mathbb{XY})^{-p}\}^{q}\mathbb{Y}=\mathbb{X}^{-1}\{(\mathbb{YX})^{p}(\mathbb{XY})^{-p}\}^{q},\ \mathbb{B}=\mathbb{XY},\ \mathbb{T}=\mathbb{Y}^{-1} \right> \notag \\
   &= \left<\mathbb{B}, \mathbb{T} \mid \mathbb{T} \left( \mathbb{T}^{-1}\mathbb{B}^{p}\mathbb{T}\mathbb{B}^{-p}\right)^{q}\mathbb{T}^{-1}=\mathbb{B}^{-1}\left(\mathbb{T}^{-1}\mathbb{B}^{p}\mathbb{T}\mathbb{B}^{-p}\right)^{q} \right> \notag \\
   &= \left<\mathbb{A},\mathbb{B}, \mathbb{T} \mid \mathbb{T} \left( \mathbb{T}^{-1}\mathbb{B}^{p}\mathbb{T}\mathbb{B}^{-p}\right)^{q}\mathbb{T}^{-1}=\mathbb{B}^{-1}\left(\mathbb{T}^{-1}\mathbb{B}^{p}\mathbb{T}\mathbb{B}^{-p}\right)^{q} , \ \mathbb{A}=\mathbb{T}^{-1}\mathbb{B}^{p}\mathbb{T}\mathbb{B}^{-p} \right> \notag \\
    &= \left<\mathbb{A}, \mathbb{B}, \mathbb{T} \mid \mathbb{T}\mathbb{A}^{q}\mathbb{T}^{-1}=\mathbb{B}^{-1}\mathbb{A}^{q}, \ \mathbb{T}\mathbb{B}^{-p}\mathbb{A}^{-1}\mathbb{T}^{-1}=\mathbb{B}^{-p}  \right>  \label{bio6}
\end{align}  

Tracing the above transformation, we see that $\mathbb{B}$ and $\mathbb{T}$ in \eqref{bio6} correspond to $a$ and $a^{-1}t^{-1}$ in \eqref{bio2} respectively, and $b$ in \eqref{bio2} corresponds to $\mathbb{A}^{-1}$ in \eqref{bio6}. Thus $[\mathbb{B}^{p},\mathbb{A}^{q}]$ in \eqref{bio6} corresponds to $[a^{p},b^{-q}]=b^{q}[b^{q},a^{p}]b^{-q}$ in \eqref{bio2}. 
This implies that under the $0$-surgery, the relations $[b^{q},a^{p}]=1$ and $[\mathbb{B}^{p},\mathbb{A}^{q}]=1$ are equivalent. 
We pick up presentations of $G_{p,q}=\pi_{1}\left(K_{p,q}(0)\right)$ we needed. 

\begin{align}
  G_{p,q}&= \left<a, b, t \mid ta^{p}t^{-1}=b^{-1}a^{p}, \ tb^{-q}a^{-1}t^{-1}=b^{-q} ,\ [b^{q},a^{p}]=1  \right> \label{biosurg1} \\
    &= \left<x, y \mid \{ (yx)^{q}(xy)^{-q})\}^{p}  y=x^{-1}\{ (yx)^{q}(xy)^{-q})\}^{p},\ \{ (yx)^{-q}(xy)^{q})\}^{p}\{ (yx)^{q}(xy)^{-q})\}^{p}=1 \right> \label{biosurg2} \\
     &= \left<\mathbb{X}, \mathbb{Y} \mid \{(\mathbb{YX})^{p}(\mathbb{XY})^{-p}\}^{q}\mathbb{Y}=\mathbb{X}^{-1}\{(\mathbb{YX})^{p}(\mathbb{XY})^{-p}\}^{q},\ \{ (\mathbb{YX})^{-p}(\mathbb{XY})^{p})\}^{q}\{ (\mathbb{YX})^{p}(\mathbb{XY})^{-p})\}^{q}=1 \right> \label{biosurg3} \\
    &= \left<\mathbb{A}, \mathbb{B}, \mathbb{T} \mid \mathbb{T}\mathbb{A}^{q}\mathbb{T}^{-1}=\mathbb{B}^{-1}\mathbb{A}^{q}, \ \mathbb{T}\mathbb{B}^{-p}\mathbb{A}^{-1}\mathbb{T}^{-1}=\mathbb{B}^{-p} ,\ [\mathbb{B}^{p},\mathbb{A}^{q}]=1 \right>  \label{biosurg4}
\end{align} 

By Lemma~\ref{lemcandidate} and the assumption of admitting no generalized torsion elements, we have $[xy,yx]=1$ holds in \eqref{biosurg2}, which corresponds to $[b,t^{-1}bt]=1$ in \eqref{biosurg1}. 
Similarly, $[\mathbb{XY},\mathbb{YX}]=1$ holds in \eqref{biosurg3}, which corresponds to $[\mathbb{B},\mathbb{T}^{-1}\mathbb{BT}]=1$ in \eqref{biosurg4}. 

\begin{prop} \label{commute}
For every integer $n$, the relations $[b, t^{-n}bt^{n}]=1$ and $[\mathbb{B},\mathbb{T}^{-n}\mathbb{B}\mathbb{T}^{n}]=1$ hold in \eqref{biosurg1} and \eqref{biosurg4}, respectively, under the assumption of admitting no generalized torsion elements and the positivity of $p$ and $q$.  
\end{prop}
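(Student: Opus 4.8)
The plan is to prove this by induction on $|n|$, using the fact that the group has no generalized torsion elements to upgrade various ``$1 \in \langle\langle \cdot \rangle\rangle^{+}$'' statements to honest commutation relations. The base case $n = 0$ is trivial, and the case $n = 1$ is exactly what was observed just before the statement: $[xy, yx] = 1$ in \eqref{biosurg2} translates to $[b, t^{-1}bt] = 1$ in \eqref{biosurg1}, and similarly $[\mathbb{B}, \mathbb{T}^{-1}\mathbb{B}\mathbb{T}] = 1$ in \eqref{biosurg4}. By the symmetry of the presentation transformation (the passage between \eqref{biosurg1} and \eqref{biosurg4} is an involution-like duality swapping the roles of $p$ and $q$), it suffices to handle one of the two families, say $[b, t^{-n}bt^{n}] = 1$ in \eqref{biosurg1}; the other follows by the same argument applied to \eqref{biosurg4}.

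For the inductive step, I would first use the relations of \eqref{biosurg1} to conjugate the known commutation relation by $t$. Specifically, from $[b^{q}, a^{p}] = 1$ together with $ta^{p}t^{-1} = b^{-1}a^{p}$ and $tb^{-q}a^{-1}t^{-1} = b^{-q}$, one can express $t^{-1}bt$ and $tbt^{-1}$ (or suitable powers) in terms of $a, b$, and then propagate a commutation relation $[b, t^{-n}bt^{n}] = 1$ to $[b, t^{-(n+1)}bt^{n+1}] = 1$ by conjugating by $t$ and re-expressing. The point is that the assumption $[b, t^{-1}bt] = 1$ says $b$ and its $t$-conjugate commute, and combined with the defining relations this should force the whole ``$t$-orbit'' of $b$ to pairwise commute. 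Where a direct algebraic manipulation stalls, I would instead produce an element of $\langle\langle \cdot \rangle\rangle^{+}$ that must be trivial (invoking Lemma~\ref{lemitomotegiteragaito}, especially part 3 on commutators of powers, and Lemma~\ref{lemforlongitude}) and then apply the no-generalized-torsion hypothesis to conclude the relevant commutator is actually $1$.

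The main obstacle I anticipate is the inductive step itself: unlike the $n = 1$ case, which came essentially for free from Lemma~\ref{lemcandidate} applied to the longitude relation, there is no obvious ``longitude-type'' word that directly witnesses $[b, t^{-n}bt^{n}]$ as a product of conjugates of a known trivial-in-$\langle\langle\rangle\rangle^{+}$ element. The trick will be to leverage the already-established relations for smaller $n$ to rewrite $t^{-n}bt^{n}$ in a normal form (using that $\langle a, b\rangle$ becomes abelian-ish once $[b, t^{-1}bt] = 1$ is known, at least on the relevant subgroup), and then show the new commutator lies in $\langle\langle [b^{q}, a^{p}] \rangle\rangle^{+} = \{1\}$-generated semigroup, hence is trivial. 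I would also need to be slightly careful that the relations of \eqref{biosurg1} only let me move exponents that are multiples of $p$ or $q$ cleanly through $t$-conjugation, as in the matrix computation of Section~\ref{secprfofthm2}; handling general exponents may require first establishing the claim for $b^{p}$ or $b^{q}$ in place of $b$ and then bootstrapping. Once all the $[b, t^{-n}bt^{n}] = 1$ hold, the symmetric statement for $\mathbb{B}, \mathbb{T}$ follows from presentation \eqref{biosurg4} by the same reasoning, completing the proof.
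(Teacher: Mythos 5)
Your high-level strategy --- induct on $|n|$, and at each stage use the no-generalized-torsion hypothesis together with Lemma~\ref{lemitomotegiteragaito}(3) to upgrade a membership statement $1\in\left<\left<\cdot\right>\right>^{+}$ to an honest relation --- is exactly right, and your base cases match the paper. But the proposal stops short of the one idea that actually makes the inductive step work, and you candidly flag this yourself (``there is no obvious longitude-type word that directly witnesses $[b,t^{-n}bt^{n}]$\dots''). The paper's mechanism is the interplay between the two presentations \eqref{biosurg1} and \eqref{biosurg4}. Under the identification $\mathbb{B}=a$, $\mathbb{T}=a^{-1}t^{-1}$, $b=\mathbb{A}^{-1}$, the generator $a^{-1}=b^{q}t^{-1}b^{-q}t$ becomes, \emph{modulo the commutation relations already established at level $\le n$}, the $q$-th power $[b^{-1},t]^{q}$; symmetrically $b^{-1}=\mathbb{A}=\mathbb{B}^{p}\mathbb{T}^{-1}\mathbb{B}^{-p}\mathbb{T}$ becomes a $p$-th power $\left(\mathbb{B}\mathbb{T}^{-1}\mathbb{B}^{-1}\mathbb{T}\right)^{p}$. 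So the known relation $[\mathbb{B}^{-1},\mathbb{T}^{-n}\mathbb{B}^{-1}\mathbb{T}^{n}]=1$, rewritten in $a,b,t$, takes the form $\left[\,[b^{-1},t]^{q},\,(t^{n-1}[b,t^{-1}]t^{-(n-1)})^{q}\,\right]=1$; Lemma~\ref{lemitomotegiteragaito}(3) plus the no-generalized-torsion hypothesis strips the exponents $q$, and an explicit computation using the level-$\le n$ relations turns the resulting relation into $[b,t^{-(n+1)}bt^{n+1}]=1$. This is the ``longitude-type word'' you were missing: it is manufactured by translating between the two presentations, not found inside one of them.

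This also exposes a genuine error in your reduction: you claim that ``by symmetry it suffices to handle one of the two families.'' In the paper the two families are not proved by parallel independent inductions; they bootstrap each other. The level-$(n+1)$ relation for $b,t$ is derived from the level-$n$ relation for $\mathbb{B},\mathbb{T}$, and the level-$(n+1)$ relation for $\mathbb{B},\mathbb{T}$ is derived from the level-$n$ relation for $b,t$ (the latter direction additionally requires an auxiliary claim, proved by its own induction, rewriting $\mathbb{B}\mathbb{T}^{-1}\mathbb{B}^{-1}\mathbb{T}\cdot(\mathbb{BT})^{m}$ so that the conjugating word can be pushed through). If you discard one family you lose the input needed for the other family's next step, so the single-family induction you propose does not close. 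Your parenthetical worry about only being able to move exponents that are multiples of $p$ or $q$ through $t$-conjugation is pointing at precisely this issue; the resolution is the two-presentation bootstrap, not a normal form inside $\left<a,b\right>$.
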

\begin{proof}
Note that since $t^{n}[b,t^{-n}bt^{n}]t^{-n}=[t^{n}bt^{-n},b]$, it is enough to prove for non-negative $n$. 
We prove by induction on $n$. 
The case $n=0$ is trivial and the case $n=1$ is stated above. \\
Let $n$ is a positive integer and assume that $[b,t^{-k}bt^{-k}]=1$ and $[\mathbb{B},\mathbb{T}^{-k}\mathbb{B}\mathbb{T}^{k}]=1$ hold in \eqref{biosurg1} and \eqref{biosurg4}, respectively, for every integer $k$ such that $|k|\leq n$. 
The element $[\mathbb{B}^{-1},\mathbb{T}^{-n}\mathbb{B}^{-1}\mathbb{T}^{n}]=[\mathbb{B},\mathbb{T}^{n}]^{-1}\cdot[\mathbb{B},\mathbb{T}^{-n}\mathbb{B}\mathbb{T}^{n}]\cdot[\mathbb{B},\mathbb{T}^{n}]$ is represented as follows in \eqref{biosurg1}: 
\begin{align}
1&=[\mathbb{B}^{-1},\mathbb{T}^{-n}\mathbb{B}^{-1}\mathbb{T}^{n}] \notag \\
  &=[a^{-1},(ta)^{n}a^{-1}(ta)^{-n}] \notag \\
  &=[b^{q}t^{-1}b^{-q}t, (b^{q}tb^{-q})^{n}\cdot b^{q}t^{-1}b^{-q}t \cdot (b^{q}tb^{-q})^{-n}] \notag \\
  &=[b^{q}\cdot t^{-1}b^{-q}t, (b^{q}t^{n}b^{-q})\cdot b^{q}t^{-1}b^{-q}t \cdot (b^{q}t^{-n}b^{-q})] \notag \\
  &=\left[ [b^{-1},t]^{q}, b^{q}t^{n-1}b^{-q}t b^{q}t^{-n}b^{-q} \right] \notag \\
  &=\left[ [b^{-1},t]^{q}, b^{q}\cdot t^{n-1}b^{-q}t^{-(n-1)}\cdot t^{n} b^{q}t^{-n}\cdot b^{-q} \right] \notag \\
  &=\left[ [b^{-1},t]^{q},  t^{n-1}b^{-q}t^{-(n-1)}\cdot t^{n} b^{q}t^{-n} \right] \notag \\
  &=\left[ [b^{-1},t]^{q},  t^{n-1}\cdot b^{-q} \cdot t b^{q} t^{-1}\cdot t^{-(n-1)} \right] \notag \\
  &=\left[ [b^{-1},t]^{q},  \left( t^{n-1}\cdot [b,t^{-1}]\cdot t^{-(n-1)}\right)^{q} \right] \notag 
\end{align}
Note that $[b,t^{-k}bt^{k}]=1$ implies $[b^{-1},t^{-k}bt^{k}]=1$ and so on. 
By the third of Lemma~\ref{lemitomotegiteragaito}, we have $1 \in \left< \left< \left[ [b^{-1},t],   t^{n-1}\cdot [b,t^{-1}]\cdot t^{-(n-1)} \right] \right> \right>^{+} $. 
Since $G_{p,q}$ admits no generalized torsion elements, we have $\left[ [b^{-1},t],  t^{n-1}\cdot [b,t^{-1}]\cdot t^{-(n-1)} \right]=1$. 
We compute as follows: 
\begin{align}
1&=\left[ [b^{-1},t],  t^{n-1}\cdot [b,t^{-1}]\cdot t^{-(n-1)} \right] \notag \\
  &=\left(t^{-1}btb^{-1}\right) \left(t^{n}b^{-1}t^{-1}bt^{-(n-1)} \right) \left(bt^{-1}b^{-1}t\right) \left( t^{n-1}b^{-1}tbt^{-n} \right) \notag \\
  &= (t^{-1}bt)b^{-1} (t^{n}b^{-1}t^{-n}) (t^{n-1}bt^{-(n-1)})b(t^{-1}b^{-1}t) (t^{n-1}b^{-1}t^{-(n-1)}) (t^{n}bt^{-n}) \notag \\
  &= (t^{-1}bt) (t^{n}b^{-1}t^{-n}) (t^{n-1}bt^{-(n-1)})(t^{-1}b^{-1}t) (t^{n-1}b^{-1}t^{-(n-1)}) (t^{n}bt^{-n}) \notag \\
  &= (t^{-1}bt) (t^{n}b^{-1}t^{-n})\cdot t^{-1} (t^{n}bt^{-n})(b^{-1})t\cdot (t^{n-1}b^{-1}t^{-(n-1)}) (t^{n}bt^{-n}) \notag \\
  &= (t^{-1}bt) (t^{n}b^{-1}t^{-n})\cdot t^{-1} (b^{-1})(t^{n}bt^{-n})t\cdot (t^{n-1}b^{-1}t^{-(n-1)}) (t^{n}bt^{-n}) \notag \\
  &= (t^{-1}bt) (t^{n}b^{-1}t^{-n})\cdot (t^{-1} b^{-1}t)(t^{n-1}bt^{-(n-1)})\cdot (t^{n-1}b^{-1}t^{-(n-1)}) (t^{n}bt^{-n}) \notag \\
  &= (t^{-1}bt) (t^{n}b^{-1}t^{-n})(t^{-1} b^{-1}t) (t^{n}bt^{-n}) \notag \\
  &= t^{-1}bt^{n+1}b^{-1}t^{-(n+1)} b^{-1} t^{n+1}bt^{-n} \notag \\
  &=t^{-1}[b^{-1}, t^{n+1}bt^{-(n+1)}]t \notag
\end{align}
This implies $[b,t^{-(n+1)}bt^{n+1}]=1$. \\

To get $[\mathbb{B}, \mathbb{T}^{-(n+1)}\mathbb{B}\mathbb{T}^{n+1}]=1$, we introduce the following claim:
\begin{claim}
Under the assumption that $[\mathbb{B}, \mathbb{T}^{-k}\mathbb{B}\mathbb{T}^{k}]=1$ holds for any integer $k$ such that $|k|\leq n$, 
the equation $\mathbb{B}\mathbb{T}^{-1}\mathbb{B}^{-1}\mathbb{T}\cdot \left( \mathbb{BT}\right)^{m}=\left( \mathbb{BT}\right)^{m} \cdot \mathbb{T}^{-m}\mathbb{B}\mathbb{T}^{-1}\mathbb{B}^{-1}\mathbb{T}^{m+1}$ holds for any non-negative integer $m$ such that $m\leq n$. 
\end{claim}
\begin{proof}
We prove by induction. 
The case $m=0$ is trivial. 
Assume that the statement holds for some $m<n$. 
Then
\begin{align}
\mathbb{B}\mathbb{T}^{-1}\mathbb{B}^{-1}\mathbb{T}\cdot \left( \mathbb{BT}\right)^{m+1}&=\left( \mathbb{BT}\right)^{m} \cdot \mathbb{T}^{-m}\mathbb{B}\mathbb{T}^{-1}\mathbb{B}^{-1}\mathbb{T}^{m+1}\cdot \left( \mathbb{BT}\right) \notag \\
&=\left( \mathbb{BT}\right)^{m} \cdot \left( \mathbb{T}^{-m}\mathbb{B}\mathbb{T}^{m}\right)\cdot \left( \mathbb{T}^{-(m+1)}\mathbb{B}^{-1}\mathbb{T}^{m+1}\right)\cdot \mathbb{B}\cdot \mathbb{T} \notag \\
&=\left( \mathbb{BT}\right)^{m}\cdot \mathbb{B} \cdot \left( \mathbb{T}^{-m}\mathbb{B}\mathbb{T}^{m}\right)\cdot \left( \mathbb{T}^{-(m+1)}\mathbb{B}^{-1}\mathbb{T}^{m+1}\right)\cdot  \mathbb{T} \notag \\
&=\left( \mathbb{BT}\right)^{m}\cdot \mathbb{BT} \cdot \left( \mathbb{T}^{-(m+1)}\mathbb{B}\mathbb{T}^{m+1}\right)\cdot \left( \mathbb{T}^{-(m+2)}\mathbb{B}^{-1}\mathbb{T}^{m+2}\right) \notag \\
&=\left( \mathbb{BT}\right)^{m+1} \cdot \mathbb{T}^{-(m+1)}\mathbb{B}\mathbb{T}^{-1}\mathbb{B}^{-1}\mathbb{T}^{m+2} \notag
\end{align}
This finishes the induction.
\end{proof}

Using the claim above, we represent $1=[b,t^{n}bt^{-n}]$ in terms of $\{ \mathbb{B}, \mathbb{T} \}$ as follows:
\begin{align}
1&=[b,t^{n}bt^{-n}] \notag \\
&= \left[ \mathbb{A}^{-1}, \left( \mathbb{BT}\right)^{-n} \mathbb{A}^{-1} \left( \mathbb{BT}\right)^{n}\right] \notag \\
&= \left[ \mathbb{B}^{p}\mathbb{T}^{-1}\mathbb{B}^{-p}\mathbb{T}, \left( \mathbb{BT}\right)^{-n}\cdot \mathbb{B}^{p}\mathbb{T}^{-1}\mathbb{B}^{-p}\mathbb{T} \cdot  \left( \mathbb{BT}\right)^{n}\right] \notag \\
&= \left[ \left( \mathbb{B}\mathbb{T}^{-1}\mathbb{B}^{-1}\mathbb{T} \right)^{p}, \left( \mathbb{BT}\right)^{-n}\cdot \left( \mathbb{B}\mathbb{T}^{-1}\mathbb{B}^{-1}\mathbb{T}\right)^{p} \cdot  \left( \mathbb{BT}\right)^{n}\right] \notag \\
&= \left[ \left( \mathbb{B}\mathbb{T}^{-1}\mathbb{B}^{-1}\mathbb{T} \right)^{p},  \left(\mathbb{T}^{-n} \mathbb{B}\mathbb{T}^{-1}\mathbb{B}^{-1}\mathbb{T}^{n+1}\right)^{p} \right] \notag 
\end{align}
By the third of Lemma~\ref{lemitomotegiteragaito}, we have $1 \in \left< \left< \left[  \mathbb{B}\mathbb{T}^{-1}\mathbb{B}^{-1}\mathbb{T} ,  \mathbb{T}^{-n} \mathbb{B}\mathbb{T}^{-1}\mathbb{B}^{-1}\mathbb{T}^{n+1}\right] \right> \right>^{+} $. 
Since $G_{p,q}$ admits no generalized torsion elements, we have $\left[  \mathbb{B}\mathbb{T}^{-1}\mathbb{B}^{-1}\mathbb{T} ,  \mathbb{T}^{-n} \mathbb{B}\mathbb{T}^{-1}\mathbb{B}^{-1}\mathbb{T}^{n+1}\right]=1$. 
We compute as follows:
\begin{align}
1&=\left[  \mathbb{B}\mathbb{T}^{-1}\mathbb{B}^{-1}\mathbb{T} ,  \mathbb{T}^{-n} \mathbb{B}\mathbb{T}^{-1}\mathbb{B}^{-1}\mathbb{T}^{n+1}\right] \notag \\
&=\mathbb{T}^{-1}\mathbb{B}\mathbb{T}\mathbb{B}^{-1} \cdot \mathbb{T}^{-(n+1)}\mathbb{B}\mathbb{T}\mathbb{B}^{-1}\mathbb{T}^{n} \cdot \mathbb{B}\mathbb{T}^{-1}\mathbb{B}^{-1}\mathbb{T} \cdot  \mathbb{T}^{-n} \mathbb{B}\mathbb{T}^{-1}\mathbb{B}^{-1}\mathbb{T}^{n+1} \notag \\
&=\left( \mathbb{T}^{-1}\mathbb{B}\mathbb{T}\right) \cdot \mathbb{B}^{-1} \cdot \left( \mathbb{T}^{-(n+1)}\mathbb{B}\mathbb{T}^{n+1}\right) \cdot \left( \mathbb{T}^{-n}\mathbb{B}^{-1}\mathbb{T}^{n}\right) \cdot \mathbb{B} \cdot \left( \mathbb{T}^{-1}\mathbb{B}^{-1}\mathbb{T}\right) \cdot \left( \mathbb{T}^{-n} \mathbb{B}\mathbb{T}^{n}\right) \cdot \left( \mathbb{T}^{-(n+1)}\mathbb{B}^{-1}\mathbb{T}^{n+1}\right) \notag \\
&= \mathbb{B}^{-1} \cdot \left( \mathbb{T}^{-(n+1)}\mathbb{B}\mathbb{T}^{n+1}\right) \cdot  \mathbb{B}  \cdot \left( \mathbb{T}^{-(n+1)}\mathbb{B}^{-1}\mathbb{T}^{n+1}\right) \notag \\
&= \left[ \mathbb{B}  , \mathbb{T}^{-(n+1)}\mathbb{B}^{-1}\mathbb{T}^{n+1}\right]\notag 
\end{align}
In the above, we use the commutativity of $\mathbb{T}^{-1}\mathbb{BT}$ and $\mathbb{T}^{-(k+1)}\mathbb{BT}^{k+1}$, which is induced by that of $\mathbb{B}$ and $\mathbb{T}^{-k}\mathbb{BT}^{k}$ for integer $k$ such that $|k|\leq n$. 
The above computation implies $ \left[ \mathbb{B}  , \mathbb{T}^{-(n+1)}\mathbb{B}\mathbb{T}^{n+1}\right]=1$, and this finishes the induction. 
\end{proof}

By Proposition~\ref{commute}, we have $[t^{-m}bt^{m}, t^{-n}bt^{n}]=t^{-m}[b,t^{-(n-m)}bt^{n-m}]t^{m}=1$ for all integers $m$ and $n$. 
Moreover, $[t^{-m}at^{m},t^{-n}bt^{n}]=1$ and $[t^{-m}at^{m},t^{-n}at^{n}]=1$ hold for all integers $m$ and $n$ since $a=
t^{-1}b^{q}tb^{-q}=(t^{-1}bt)^{q}\cdot b^{-q}$. 
We give a lemma which will be used later: 

\begin{lem} \label{characteristic}
Under the assumption of admitting no generalized torsion elements and the positivity of $p$ and $q$, we have
$\left( tb^{pq}t^{-1}\right) \cdot b^{-(2pq+1)} \cdot \left( t^{-1}b^{pq}t\right)=1$ in $G_{p,q}$. 
\end{lem}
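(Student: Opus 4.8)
The plan is to carry out the whole computation inside the abelian subgroup of $G_{p,q}$ furnished by Proposition~\ref{commute}, using the presentation \eqref{biosurg1}. I would abbreviate $b_{n}:=t^{-n}bt^{n}$, so that $b_{0}=b$, $b_{1}=t^{-1}bt$, $b_{-1}=tbt^{-1}$, and $tb_{n}t^{-1}=b_{n-1}$. By Proposition~\ref{commute} together with the remarks immediately following it, the elements $b_{-1}$, $b_{0}$, $b_{1}$ pairwise commute (in fact all the $b_{n}$ and all $t^{n}at^{-n}$ do); this is the only place where the hypothesis of admitting no generalized torsion elements is used in the proof.

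Next I would rewrite the two defining relations of \eqref{biosurg1} in terms of these elements. The second relation $tb^{-q}a^{-1}t^{-1}=b^{-q}$ is equivalent to $a=(t^{-1}bt)^{q}b^{-q}=b_{1}^{q}b_{0}^{-q}$, and hence, using $[b_{0},b_{1}]=1$, to $a^{p}=b_{1}^{pq}b_{0}^{-pq}$. Conjugating by $t$ gives $ta^{p}t^{-1}=b_{0}^{pq}b_{-1}^{-pq}$, while $b^{-1}a^{p}=b_{0}^{-pq-1}b_{1}^{pq}$. The first relation $ta^{p}t^{-1}=b^{-1}a^{p}$ then reads $b_{0}^{pq}b_{-1}^{-pq}=b_{0}^{-pq-1}b_{1}^{pq}$, which (moving powers of $b_{0}$ and $b_{-1}$ to one side) rearranges to
\begin{align}
b_{1}^{pq}b_{-1}^{pq}=b_{0}^{2pq+1}. \notag
\end{align}

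Finally I would translate this back into the claimed form. Since $b_{1}^{pq}=t^{-1}b^{pq}t$, $b_{-1}^{pq}=tb^{pq}t^{-1}$, and $b_{-1},b_{0},b_{1}$ all commute,
\begin{align}
\left( tb^{pq}t^{-1}\right)\cdot b^{-(2pq+1)}\cdot\left( t^{-1}b^{pq}t\right)&=b_{-1}^{pq}\,b_{0}^{-(2pq+1)}\,b_{1}^{pq} \notag \\
&=b_{1}^{pq}b_{-1}^{pq}\,b_{0}^{-(2pq+1)}=b_{0}^{2pq+1}\,b_{0}^{-(2pq+1)}=1,\notag
\end{align}
which is the assertion. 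There is no genuine obstacle here: once Proposition~\ref{commute} is in hand the lemma is a purely formal consequence of the first defining relation of \eqref{biosurg1}, and the only care required is bookkeeping with the exponents $pq$ and $2pq+1$.
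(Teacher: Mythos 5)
Your proof is correct and is essentially the same argument as the paper's: both work inside the abelian subgroup guaranteed by Proposition~\ref{commute} and reduce the claim to a formal consequence of the two defining relations of \eqref{biosurg1}, the only difference being that you eliminate $a$ via $a=b_{1}^{q}b_{0}^{-q}$ while the paper computes $tb^{pq}t^{-1}=a^{-p}b^{pq+1}$ and $t^{-1}b^{pq}t=a^{p}b^{pq}$ directly.
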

\begin{proof}
Under the assumption, we can use the commutativity in Proposition~\ref{commute}. 
Using the presentation \eqref{biosurg1} of $G_{p,q}$, we have:
\begin{itemize}
\item $tb^{pq}t^{-1}= \left(tb^{pq}a^{p}t^{-1}\right)\cdot \left( t a^{-p}t^{-1}\right)=\left( t b^{-q}a^{-1} t^{-1} \right)^{-p}\cdot \left( t a^{p}t^{-1}\right)^{-1}=\left(b^{-q}\right)^{-p}\cdot(b^{-1}a^{p})^{-1}=a^{-p}b^{pq+1}$, 
\item $t^{-1}b^{pq}t= \left( t^{-1} b^{-q} t \right)^{-p}=\left( b^{-q} a^{-1}\right)^{-p}=a^{p}b^{pq}$.
\end{itemize}
Using the commutativity, we get the statement. 
\end{proof}

\subsection*{Injecting $G_{p,q}$ with the assumption into some group $K$}
We assume that $p$ and $q$ are positive, and $G_{p,q}$ admits no generalized torsion elements. 
We will construct some group $K$ and show that $G_{p,q}$ with our assumption injects into this $K$, which will be shown to be bi-orderable later.

Consider a group $H$ with the following presentation. 
\begin{align}
H=\left<\tau,\  x_{n} \ (n\in \mathbb{Z}) \mid [x_{i},x_{j}]=1,\ x^{pq}_{i+1}\cdot x^{-(2pq+1)}_{i}\cdot x^{pq}_{i-1}=1,\ \tau x_{i} \tau^{-1}=x_{i+1} \ (i,j\in \mathbb{Z})\right> \label{grph}
\end{align}
\begin{lem}
Under the assumption, $G_{p,q}$ is isomorphic to $H$, where $\tau$ and $x_{n}$ in $H$ correspond to $t$ and $t^{n}bt^{-n}$ in $G_{p,q}$, respectively. 
\end{lem}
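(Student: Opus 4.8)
The plan is to write down the obvious candidate isomorphism together with its inverse and to check that the defining relations are respected in each direction; all of the genuine work has already been done in Proposition~\ref{commute} and Lemma~\ref{characteristic}, so what is left is Tietze-style bookkeeping, carried out under the standing assumption that $G_{p,q}$ has no generalized torsion elements and that $p,q>0$.

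First I would record that in presentation \eqref{biosurg1} the relation $tb^{-q}a^{-1}t^{-1}=b^{-q}$ is equivalent to $a=t^{-1}b^{q}tb^{-q}$, so that $G_{p,q}$ is generated by $b$ and $t$ alone, and the elements $t^{n}bt^{-n}$ are legitimate targets for the generators $x_{n}$ of $H$. Then I would define $\phi\colon H\to G_{p,q}$ on generators by $\phi(\tau)=t$ and $\phi(x_{n})=t^{n}bt^{-n}$, and verify that the three families of relators of \eqref{grph} are killed. The relations $\tau x_{i}\tau^{-1}=x_{i+1}$ are immediate. The relations $[x_{i},x_{j}]=1$ become $[t^{i}bt^{-i},t^{j}bt^{-j}]=1$, which hold by Proposition~\ref{commute} (and the observation recorded just after it that $[t^{-m}bt^{m},t^{-n}bt^{n}]=1$ for all $m,n$). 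The relations $x_{i+1}^{pq}x_{i}^{-(2pq+1)}x_{i-1}^{pq}=1$ become $t^{i}\bigl[(tb^{pq}t^{-1})b^{-(2pq+1)}(t^{-1}b^{pq}t)\bigr]t^{-i}$, whose bracket vanishes by Lemma~\ref{characteristic}. Hence $\phi$ is well defined.

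In the other direction I would define $\psi\colon G_{p,q}\to H$ from presentation \eqref{biosurg1} by $\psi(b)=x_{0}$, $\psi(t)=\tau$, $\psi(a)=x_{-1}^{q}x_{0}^{-q}$ (this is forced, since $a=t^{-1}b^{q}tb^{-q}$ and $\tau^{-1}x_{0}\tau=x_{-1}$). Using that all the $x_{i}$ commute in $H$, the quadratic relator of \eqref{grph} rearranges to $x_{i+1}^{pq}x_{i-1}^{pq}=x_{i}^{2pq+1}$, and with these two facts a short computation shows each relator of \eqref{biosurg1} maps to $1$: the relator $tb^{-q}a^{-1}t^{-1}b^{q}$ collapses by cancellation to $x_{0}^{-q}x_{0}^{q}=1$; the relator $[b^{q},a^{p}]$ goes to $[x_{0}^{q},x_{-1}^{pq}x_{0}^{-pq}]=1$; and the relator $ta^{p}t^{-1}a^{-p}b$ goes to $x_{0}^{2pq+1}x_{1}^{-pq}x_{-1}^{-pq}$, which is $1$ by the rearranged quadratic relation. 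So $\psi$ is well defined.

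Finally I would check that $\phi$ and $\psi$ are mutually inverse, which is immediate on generators: $\phi\psi$ fixes $b$ and $t$ and sends $a$ to $(t^{-1}bt)^{q}b^{-q}=t^{-1}b^{q}tb^{-q}=a$, while $\psi\phi$ fixes $\tau$ and sends $x_{n}$ to $\tau^{n}x_{0}\tau^{-n}=x_{n}$ via $\tau x_{i}\tau^{-1}=x_{i+1}$. Thus $\psi$ is the claimed isomorphism $G_{p,q}\xrightarrow{\ \sim\ }H$ carrying $t$ to $\tau$ and $t^{n}bt^{-n}$ to $x_{n}$. There is no serious obstacle here: the substance of the argument is the already-established Proposition~\ref{commute}, and the only point requiring care is keeping track of the order of the factors in the quadratic relation, and of the noncommutativity of $b$ and $t$ themselves while freely using the commutativity of their conjugates $t^{n}bt^{-n}$.
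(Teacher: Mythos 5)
Your proposal is correct and follows essentially the same route as the paper: define the two maps on generators ($\tau\mapsto t$, $x_n\mapsto t^nbt^{-n}$ one way, $t\mapsto\tau$, $b\mapsto x_0$ the other), check that the relators die using Proposition~\ref{commute} and Lemma~\ref{characteristic}, and observe the maps are mutually inverse on generators. The paper states these verifications without writing them out; your explicit computations (in particular $\psi(a)=x_{-1}^{q}x_{0}^{-q}$ and the reduction of the first relator to the quadratic relation) are exactly the bookkeeping the paper leaves to the reader.
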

\begin{proof}
Under the assumption, a map from $H$ to $G_{p,q}$ sending $\tau$ and $x_n$ to $t$ and $t^{n}bt^{-n}$ sends every relation in \eqref{grph} to the identity element in $G_{p,q}$ by Proposition~\ref{commute} and Lemme~\ref{characteristic}, and this is a group homomorphism. 
For the other side, a map from $G_{p,q}$ to $H$ sending $t$ and $b$ to $\tau$ and $x_{0}$ sends every relation in \eqref{biosurg1} to the identity element in $H$ by noting that $a=t^{-1}b^{q}tb^{-q}$, the commutativity in Proposition~\ref{commute} and the relation in Lemma~\ref{characteristic} under the assumption, and this is also a group homomorphism. 
For the constructed two maps, one is the inverse of the other. 
Therefore $G_{p,q}$ and $H$ are isomorphic under the identification. 
\end{proof}

We identify $H$ with $G_{p,q}$. 
Take the abelianization $\rho: G_{p,q}\longrightarrow \mathbb{Z}$ such that $\rho(x_n)=0$ for every $n\in \mathbb{Z}$ and $\rho(\tau)=1$. 
Then we have the following: 
\begin{lem}
${\rm Ker}(\rho)$ has the following presentation, where $x_n$ is the same element of $H=G_{p,q}$ for every $n\in \mathbb{Z}$:
\begin{align}
{\rm Ker}(\rho)=\left<x_{n} \ (n\in \mathbb{Z}) \mid [x_{i},x_{j}]=1,\ x^{pq}_{i+1}\cdot x^{-(2pq+1)}_{i}\cdot x^{pq}_{i-1}=1\ (i,j\in \mathbb{Z})\right>  \label{kerh}
\end{align}
\end{lem}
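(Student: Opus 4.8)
The plan is to identify $\mathrm{Ker}(\rho)$ as the normal subgroup of $H$ generated by all the $x_n$, and to compute a presentation for it via a standard Reidemeister–Schreier type argument, exploiting the fact that $H$ is built as an HNN/semidirect-product-like extension along $\tau$. Concretely, from the presentation \eqref{grph} of $H=G_{p,q}$ we see that $H/\langle\langle x_n : n\in\mathbb Z\rangle\rangle \cong \langle \tau\rangle \cong \mathbb Z$, and $\rho$ is exactly the quotient map onto this $\mathbb Z$; hence $\mathrm{Ker}(\rho)$ is precisely the normal closure of $\{x_n\}$. Since $\rho(\tau)=1$, the set $\{\tau^k : k\in\mathbb Z\}$ is a Schreier transversal for $\mathrm{Ker}(\rho)$ in $H$.

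First I would set up the Reidemeister–Schreier rewriting with transversal $\{\tau^k\}$. The Schreier generators are the elements $\tau^k x_n \tau^{-(k+ \text{shift})}$, but because the relation $\tau x_i \tau^{-1} = x_{i+1}$ already holds in $H$, every such generator is identified with some single $x_m$; so the generating set of $\mathrm{Ker}(\rho)$ collapses to exactly $\{x_n : n\in\mathbb Z\}$, as claimed. Next I would push each defining relator of $H$ around each coset representative $\tau^k$ and rewrite it in the $x_n$: the relator $[x_i,x_j]$ conjugated by $\tau^k$ becomes $[x_{i+k},x_{j+k}]$, i.e. all commutator relations $[x_i,x_j]=1$; the relator $x_{i+1}^{pq} x_i^{-(2pq+1)} x_{i-1}^{pq}$ conjugated by $\tau^k$ becomes $x_{i+1+k}^{pq} x_{i+k}^{-(2pq+1)} x_{i-1+k}^{pq}$, i.e. the same family of relations reindexed; and the relator $\tau x_i \tau^{-1} x_{i+1}^{-1}$, which involves $\tau$, is consumed entirely in the identification of Schreier generators and contributes nothing new. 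This yields exactly the presentation \eqref{kerh}.

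The one genuine subtlety — and the step I'd flag as the main obstacle — is making sure the Reidemeister–Schreier bookkeeping is carried out honestly despite $\mathbb Z$ being infinite: one must check that the transversal is genuinely Schreier (it is, since $\{\tau^k\}$ is closed under taking prefixes of the obvious normal forms $\tau^k$), that no generator or relation is accidentally omitted or double-counted when $k$ ranges over all of $\mathbb Z$, and that the generator-identification step using $\tau x_i\tau^{-1}=x_{i+1}$ is applied consistently (equivalently, that one may first Tietze-transform $H$ to eliminate the $\tau$-conjugation relations in favor of treating $H$ as an ascending/descending HNN presentation, then read off the kernel as the base group). Once that combinatorial care is taken, the computation is routine: conjugating the two surviving relator families by the transversal just shifts their indices, and since the index set is already all of $\mathbb Z$ the shifted families coincide with the original ones, giving precisely \eqref{kerh}. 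I would present this as a short application of Reidemeister–Schreier, doing one representative conjugation of each relator explicitly and remarking that the rest follow by the $\mathbb Z$-translation symmetry.
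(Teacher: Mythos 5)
Your argument is correct and is essentially the paper's own proof in formalized clothing: the paper likewise observes that $\mathrm{Ker}(\rho)$ is generated by the $x_n$ and that conjugating each relator of \eqref{grph} by powers of $\tau$ merely shifts indices via $\tau x_i\tau^{-1}=x_{i+1}$, so the rewritten relator families coincide with those of \eqref{kerh} (and a remark in the paper notes the same HNN-extension viewpoint you use). Explicitly invoking Reidemeister--Schreier with the transversal $\{\tau^k\}$ is a fine way to make the bookkeeping rigorous, but it is not a different route.
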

\begin{proof}
It is easy to see that ${\rm Ker}(\rho)$ is generated by $x_{n}$ ($n\in \mathbb{Z}$) in $H=G_{p,q}$. 
Moreover, by using the relation of type $\tau x_{i} \tau^{-1}=x_{i+1}$, we see that every conjugate of every relation in \eqref{grph} in $H=G_{p,q}$ is equivalent to a conjugate of a relation (or its inverse) in \eqref{kerh}. 
For example, $\tau x_{k}[x_{i},x_{j}]x^{-1}_{k}\tau^{-1}$ is equivalent to $x_{k+1}[x_{i+1},x_{j+1}]x^{-1}_{k+1}$. 
This gives the presentation. 
\end{proof}

\begin{rmk}
The presentation \eqref{kerh} for ${\rm Ker}(\rho)$ is also obtained by noting that $H=G_{p,q}$ has a structure of the HNN-extension of a group. 
\end{rmk}

\begin{defini}
Take a matrix 
$A=\left( \begin{array}{cc}
     1   &  -\frac{1}{q}  \\
      -\frac{1}{p}   &  1+\frac{1}{pq} \\
  \end{array} \right)\in GL_{2}(\mathbb{R})$. \\
Define $K= \mathbb{R}^{2}\rtimes \mathbb{Z}$ as a group whose product is defined by
\begin{align}
 \left(\begin{pmatrix}
 s_1\\l_1
\end{pmatrix}, n_1\right)\bullet
\left(\begin{pmatrix}
 s_2\\l_2
\end{pmatrix}, n_2\right)=
\left(\begin{pmatrix}
 s_1\\l_1
\end{pmatrix}+A^{n_1}
\begin{pmatrix}
 s_2\\l_2
\end{pmatrix}, n_{1}+n_2\right) \notag
\end{align} 
\end{defini}
We will show that $G_{p,q}$ under the assumption is a subgroup of $K$ after some preparations. 
For every generator $x_{n}$ in ${\rm Ker}(\rho)$ with a presentation \eqref{kerh}, we assign $A^{n}\begin{pmatrix}
 0\\1
\end{pmatrix}\in \mathbb{R}^{2}$, which is denoted by $r(x_{n})$. 
Extend this map to a homomorphism $r:{\rm Ker}(\rho)\longrightarrow \mathbb{R}^{2}$. 
An element $w=\prod \limits_{k=i}^{j}\left(x_{n_{k}}\right)^{m_{k}} \in {\rm Ker}(\rho)$ for some integers $i\leq j$, $n_{k}$'s and $m_{k}$'s is sent to $r(w)=\sum \limits_{k=i}^{j}\left(m_{k}A^{n_{k}}
\begin{pmatrix}
 0\\1
\end{pmatrix}\right) \in \mathbb{R}^2$. 
Note that the order of the products in $w$ is not important since $[x_{i},x_{j}]=1$ holds. 
Note also that $r(\cdot)$ is well-defined since $r\left( x^{pq}_{i+1}\cdot x^{-(2pq+1)}_{i}\cdot x^{pq}_{i-1}\right)=\left(pqA^{i+1}-(2pq+1)A^{i}+pqA^{i-1}\right)
\begin{pmatrix}
 0\\1
\end{pmatrix}=pq\left(A^{2}-(2+\frac{1}{pq})A+I_{2}\right)A^{i-1}
\begin{pmatrix}
 0\\1
\end{pmatrix}=\begin{pmatrix}
 0\\0
\end{pmatrix}$ by the Cayley-Hamilton's theorem. 

\begin{defini}
Let $\phi: G_{p,q}\longrightarrow K$ be a map defined by $\phi(g)=\left(r\left(g\cdot \tau^{-\rho(g)}\right) ,\rho(g) \right)$ for $g\in H= G_{p,q}$. 
Note that this $\phi$ is a homomorphism. 
\end{defini}

\begin{prop}\label{inject}
A map $\phi$ defined above is injective. 
\end{prop}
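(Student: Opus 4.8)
## Proof Proposal for Proposition~\ref{inject}

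The plan is to analyze the map $\phi$ by looking separately at the ``$\mathbb{Z}$-part'' and the ``$\mathbb{R}^2$-part'', using the split exact sequence $1\to {\rm Ker}(\rho)\to G_{p,q}\xrightarrow{\rho}\mathbb{Z}\to 1$. Since $\rho$ is the second coordinate of $\phi$ and $\phi$ is a homomorphism between groups that both surject onto $\mathbb{Z}$ via compatible maps, the kernel of $\phi$ lies inside ${\rm Ker}(\rho)$. So the whole problem reduces to showing that $r\colon {\rm Ker}(\rho)\longrightarrow \mathbb{R}^2$ is injective. First I would record that ${\rm Ker}(\rho)$ is abelian: indeed all the generators $x_n$ commute by the first family of relations in \eqref{kerh}, so ${\rm Ker}(\rho)$ is a finitely-presented abelian group, namely the quotient of the free abelian group on $\{x_n : n\in\mathbb{Z}\}$ by the subgroup generated by the relators $x_{i+1}^{pq}x_i^{-(2pq+1)}x_{i-1}^{pq}$.

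Next I would identify this abelian group explicitly. Writing the group additively, ${\rm Ker}(\rho)\cong \mathbb{Z}^{(\mathbb{Z})}\big/R$ where $R$ is spanned by the vectors $pq\,e_{i+1}-(2pq+1)e_i+pq\,e_{i-1}$. This is exactly the module $\mathbb{Z}[s,s^{-1}]\big/\big(pq\,s^2-(2pq+1)s+pq\big)$ over the Laurent polynomial ring, with $s$ acting as the shift $x_n\mapsto x_{n+1}$; note the polynomial $pq\,s^2-(2pq+1)s+pq$ is (up to sign) the Alexander polynomial $\Delta_{K_{p,q}}$ appearing earlier in the paper, which is reassuring since ${\rm Ker}(\rho)$ is the homology of the infinite cyclic cover. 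The map $r$ sends $x_n$ to $A^n\binom{0}{1}$; since $A$ satisfies its characteristic equation $A^2-(2+\tfrac1{pq})A+I_2 = O_2$, i.e.\ $pq\,A^2-(2pq+1)A+pq\,I_2=O_2$, the map $r$ is precisely the $\mathbb{Z}[s,s^{-1}]$-module map sending the class of $1$ to $\binom{0}{1}$, with $s$ acting as $A$ on $\mathbb{R}^2$. So injectivity of $r$ amounts to: the cyclic $\mathbb{Z}[s,s^{-1}]$-submodule of $\mathbb{R}^2$ generated by $\binom{0}{1}$ (with $s$ acting by $A$) is free of rank one over $\mathbb{Z}[s,s^{-1}]/(pq\,s^2-(2pq+1)s+pq)$.

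The key computation is thus to show that if $f(s)\in\mathbb{Z}[s,s^{-1}]$ and $f(A)\binom{0}{1}=\binom{0}{0}$, then $f(s)$ is divisible by $p(s):=pq\,s^2-(2pq+1)s+pq$ in $\mathbb{Z}[s,s^{-1}]$. Clearing powers of $s$, it suffices to treat $f\in\mathbb{Z}[s]$; by polynomial division $f = g\cdot p + (cs+d)$ with $c,d\in\mathbb{Q}$, and since $p$ has leading and constant coefficient $pq$ one can arrange $c,d$ to have denominators dividing a power of $pq$, but in fact I would instead argue directly that $f(A)\binom01=0$ forces $cs+d$ to vanish on $\binom01$ under $A$, i.e.\ $(cA+dI_2)\binom01 = 0$. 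Writing out $A\binom01 = \binom{-1/q}{\,1+1/pq\,}$, the equation $(cA+dI_2)\binom01=0$ reads $-c/q = 0$ and $c(1+1/pq)+d = 0$, forcing $c=d=0$; hence $p\mid f$. This shows $r$ is injective over $\mathbb{Q}[s,s^{-1}]$, and the integrality statement ${\rm Ker}(\rho)\hookrightarrow {\rm Ker}(\rho)\otimes\mathbb{Q}$ holds because the $\mathbb{Z}$-module $\mathbb{Z}[s]/(p(s))\cong \mathbb{Z}^2$ (with basis $1,s$) is torsion-free — one checks the relator matrix has the right form so the quotient is exactly $\mathbb{Z}^2$ with $s$ acting by the companion matrix of $p(s)/pq$, which is well-defined over $\mathbb{Z}$ only after observing that on this $\mathbb{Z}^2$ the action of $s$ is by $\binom{0\ \ -1}{1\ \ 2+1/pq}$... here care is needed because $2+1/pq\notin\mathbb{Z}$.

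The main obstacle, and the point I would be most careful about, is precisely this last integrality issue: $A$ itself does not have integer entries, so I cannot blithely say ${\rm Ker}(\rho)\cong\mathbb{Z}[s]/(p(s))$ with $s\mapsto A$. The clean way around it is to work with the \emph{reciprocal} normalization: note $\binom01$ is a cyclic vector for $A$ and the pair $\{\binom01, A\binom01\}$ spans $\mathbb{R}^2$, so $r$ restricted to the subgroup generated by $x_0, x_1$ is the map $\mathbb{Z}^2\to\mathbb{R}^2$ with matrix $\bigl(\begin{smallmatrix}0 & -1/q\\ 1 & 1+1/pq\end{smallmatrix}\bigr)$, which has nonzero determinant, hence is injective; then one shows every $x_n$ is an \emph{integer} combination of $x_0,x_1$ modulo the relations — this is where the relation $x_{i+1}^{pq}x_i^{-(2pq+1)}x_{i-1}^{pq}=1$ must be used to express $x_{n}$ in terms of lower-index generators, and the coefficients will be integers divided by powers of $pq$, requiring an argument that these divisions are actually exact inside ${\rm Ker}(\rho)$. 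Concretely I expect to prove by induction that ${\rm Ker}(\rho)$ is generated by $x_0$ and $x_1$ alone (so it is $\cong \mathbb{Z}^2$), deduce $\phi$ restricted there is injective from the determinant computation, and then conclude $\phi$ is injective globally since its image meets each coset $\rho^{-1}(n)$ injectively. If the rank-2 claim fails to be immediate, the fallback is to prove injectivity of $r$ after tensoring with $\mathbb{Q}$ (straightforward from the division argument above) together with torsion-freeness of ${\rm Ker}(\rho)$ as an abelian group, which follows because $\mathbb{Z}[s]/(pq\,s^2-(2pq+1)s+pq)$ is torsion-free over $\mathbb{Z}$ (its image in $\mathbb{Q}[s]/(p(s))\cong\mathbb{Q}^2$ is a finitely generated subgroup of a $\mathbb{Q}$-vector space).
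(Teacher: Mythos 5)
Your reduction to the injectivity of $r$ on ${\rm Ker}(\rho)$, the identification of ${\rm Ker}(\rho)$ with $\mathbb{Z}[s,s^{-1}]/(pq\,s^2-(2pq+1)s+pq)$, and the divisibility argument over $\mathbb{Q}$ (via the remainder $cs+d$ and the cyclic vector $\binom{0}{1}$) all match the substance of the paper's proof, which reaches the same divisibility conclusion by decomposing $\binom{0}{1}$ into eigenvectors of $A$. The problem is your handling of the integrality step. Your ``main route'' is simply false for $|pq|>1$: in $\mathbb{Z}[s,s^{-1}]/(p(s))$ one has $pq\,s^2=(2pq+1)s-pq$, so $s^2$ is \emph{not} an integer combination of $1$ and $s$, the module is not generated by $x_0,x_1$, and indeed it is not even finitely generated as an abelian group (its image in $\mathbb{Q}^2$ acquires denominators $(pq)^n$). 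So ${\rm Ker}(\rho)\cong\mathbb{Z}^2$ cannot be the basis of the argument.

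Your fallback names the right statement --- $\mathbb{Z}$-torsion-freeness of ${\rm Ker}(\rho)$ --- but the justification offered (``its image in $\mathbb{Q}[s]/(p(s))$ is a subgroup of a $\mathbb{Q}$-vector space'') is circular: the kernel of $M\to M\otimes\mathbb{Q}$ is exactly the torsion subgroup, so observing that the \emph{image} is torsion-free proves nothing about $M$ itself. Two repairs are available. The paper's route: from $Nf=p\cdot g$ with $g$ integral one gets $w^{N}=1$ in ${\rm Ker}(\rho)$, and then one invokes the standing hypothesis that $G_{p,q}$ admits no generalized torsion elements --- a nontrivial torsion element is in particular a generalized torsion element --- to conclude $w=1$. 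Your proposal never uses this hypothesis, which is the intended mechanism here. Alternatively, an unconditional repair is Gauss's lemma: since $\gcd(pq,\,2pq+1,\,pq)=1$ the relator polynomial is primitive, so $Nf\in(p(s))$ with $f$ integral forces $f\in(p(s))$ in $\mathbb{Z}[s,s^{-1}]$, giving torsion-freeness directly. As written, neither repair appears in your argument, so the integrality step is a genuine gap.
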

\begin{proof}
It is enough to show that $r:{\rm Ker}(\rho)\longrightarrow K$ is injective. 
Suppose that $r\left( \prod \limits_{k=i}^{j}\left(x_{n_{k}}\right)^{m_{k}}\right)=\sum \limits_{k=i}^{j}\left(m_{k}A^{n_{k}}
\begin{pmatrix}
 0\\1
\end{pmatrix}\right)=
\begin{pmatrix}
 0\\0
\end{pmatrix}$ holds. 
Take eigenvectors $V_{+}=\frac{1}{2\sqrt{4pq+1}}\begin{pmatrix}
 -2p\\1+\sqrt{4pq+1}
\end{pmatrix}$ and $V_{-}=\frac{1}{2\sqrt{4pq+1}}\begin{pmatrix}
 -2p\\1-\sqrt{4pq+1}
\end{pmatrix}$ for eigenvalues $\lambda_{+}=\frac{(2pq+1)\sqrt{4pq+1}}{2pq}$ and $\lambda_{-}=\frac{(2pq+1)-\sqrt{4pq+1}}{2pq}$, respectively for $A$. 
Note that $V_{+}$ and $V_{-}$ are linearly independent, and $\begin{pmatrix}
 0\\1
\end{pmatrix}=V_{+}-V_{-}$. 
Then: 
\begin{align}
\begin{pmatrix}
 0\\0
\end{pmatrix}&=\sum \limits_{k=i}^{j}\left(m_{k}A^{n_{k}}
\begin{pmatrix}
 0\\1
\end{pmatrix}\right) \notag  \\
 &=\sum \limits_{k=i}^{j}\left(m_{k}A^{n_{k}}
\left(V_{+}-V_{-}\right)\right) \notag \\
 &=\sum \limits_{k=i}^{j}\left(m_{k}
\left(\lambda^{n_{k}}_{+}V_{+}-\lambda^{n_{k}}_{-}V_{-}\right)\right) \notag \\
 &=\left( \sum \limits_{k=i}^{j} m_{k}
\lambda^{n_{k}}_{+}\right)V_{+}-\left( \sum \limits_{k=i}^{j} m_{k}
\lambda^{n_{k}}_{-}\right)V_{-} \notag
\end{align}
This implies that a Laurent polynomial $f(x)=\sum \limits_{k=i}^{j} m_{k}
x^{n_{k}}$ with integer coefficients has roots $\lambda_{+}$ and $\lambda_{-}$. 
Thus there exist a non-zero integer $N$ and a Laurent polynomial $g(x)$ with integer coefficients such that $f(x)=\frac{1}{N}\left( pqx^{2}-(2pq+1)+pqx^{-1}\right) g(x)$ holds. 
This implies that we can represent $f(x)$ as $f(x)=\sum \limits_{k=i}^{j} m_{k}
x^{n_{k}}=\frac{1}{N}\sum \limits_{k=i'}^{j'}M_{k}\left( pqx^{N_{k}+1}-(2pq+1)x^{N_{k}}+pqx^{N_{k}-1} \right)$ for some integers $i'\leq j'$, $N_{k}$'s and $M_{k}$'s. 
Then we can represent $\left( \prod \limits_{k=i}^{j}\left(x_{n_{k}}\right)^{m_{k}}\right)^{N}$ as:
\begin{align}
 \left( \prod \limits_{k=i}^{j}\left(x_{n_{k}}\right)^{m_{k}}\right)^{N} &=  \prod \limits_{k=i'}^{j'}\left(  \left(x_{N_{k}+1}\right)^{pq} \cdot \left( x_{N_{k}}\right)^{-(2pq+1)} \cdot \left( x_{N_{k}-1} \right)^{pq}   \right)^{M_{k}} =1 \notag 
\end{align}
Thus $\prod \limits_{k=i}^{j}\left(x_{n_{k}}\right)^{m_{k}}$ is a torsion element or the identity element in ${\rm Ker}(\rho)\triangleleft H=G_{p,q}$. 
In our assumption, there are no torsion elements in $G_{p,q}$. 
Hence $\prod \limits_{k=i}^{j}\left(x_{n_{k}}\right)^{m_{k}}=1$, and $r(\cdot)$ is injective.

\end{proof}

\subsection*{A bi-order on $K$ }
We construct a bi-order on $K$ by imitating the construction in \cite{perronrolfsen}. 
Since $G_{p,q}$ with our assumption injects into $K$ by Proposition~\ref{inject}, this bi-order makes $G_{p,q}$ with our assumption bi-orderable. 

Take eigenvectors $V_{\pm}$ and eigenvalues $\lambda_{\pm}$ of $A$ defined in the proof of Proposition~\ref{inject}. 
Note that $\lambda_{\pm}$ are real and positive. 
Since $V_{+}$ and $V_{-}$ are linearly independent, every element of $\mathbb{R}^{2}$ is uniquely represented as a linear combination of $V_{+}$ and $V_{-}$. 

\begin{defini}
Define an order $<$ on $K=\mathbb{R}^{2}\rtimes \mathbb{Z}$ as follows, where $a,b,c$ and $d$ are real numbers:\\
$\left( aV_{+}+bV_{-}, n\right)\ <\ \left( cV_{+}+dV_{-}, m\right)$ if
\begin{itemize}
\item $n<m$ as integers, 
\item $n=m$ and $a<c$ as real numbers, or
\item $n=m$, $a=c$ and $b<d$ as real numbers.
\end{itemize}
\end{defini}

It is easy to check that the order $<$ on $K$ defined above is invariant under the multiplying elements from left and right i.e $<$ is bi-order. 
This finishes a proof of Theorem~\ref{mainthm3}. 

\begin{rmk}~\label{candidates}
As we see in the proof of Theorem~\ref{mainthm3} in this section, 
if $G_{p,q}$ with positive integers $p$ and $q$ are not bi-orderable, then there is a torsion element in $G_{p,q}$ (as in the proof of Proposition~\ref{inject}) or there is a generalized torsion element $[b,t^{-n}bt^{n}]$ in \eqref{biosurg1} or $[\mathbb{B},\mathbb{T}^{-n}\mathbb{B}\mathbb{T}^{n}]$ in \eqref{biosurg4} for some integer $n$ (as in the proof of Proposition~\ref{commute}). 
Moreover, if $[b,t^{-n}bt^{n}]$ in \eqref{biosurg1} or $[\mathbb{B},\mathbb{T}^{-n}\mathbb{B}\mathbb{T}^{n}]$ in \eqref{biosurg4} is a non-trivial element for some integer $n$, then there exists an integer $m$ such that $|m|\leq |n|$ and $[b,t^{-m}bt^{m}]$ in \eqref{biosurg1} or $[\mathbb{B},\mathbb{T}^{-m}\mathbb{B}\mathbb{T}^{m}]$ in \eqref{biosurg4} is a generalized torsion element (as in the proof of Proposition~\ref{commute}).
\end{rmk}

\section{Some computations}\label{seccomputations}
Unfortunately, we cannot know whether $[xy,yx]$ is a generalized torsion element in $G_{p,q}$ under the presentation \eqref{anotherpresentation2} for each $(p,q)$ by Theorem~\ref{mainthm} only. 
In this section, we try to construct a homomorphism from $G_{p,q}$ to some symmetric group $S_{n+1}$ which maps $[xy,yx]$ to a non-trivial element by using a computer. 
Note that the element $[xy,yx]$ in the presentation \eqref{anotherpresentation2} corresponds to $[b,t^{-1}bt]$ in the presentation \eqref{stdrep}. 
As stated at Remark~\ref{candidates}, if one of $[b,t^{-n}bt^{n}]$ in the presentation \eqref{biosurg1} or $[\mathbb{B}, \mathbb{T}^{-n}\mathbb{B}\mathbb{T}^{n}]$ in the presentation \eqref{biosurg4} for some integer $n$ is a non-trivial element of $G_{p,q}$ for $pq>0$, we can conclude that $G_{p,q}$ admits a generalized torsion element. 
Since $G_{p,q}$ is a 3-manifold group i.e. the fundamental group of a compact 3-manifold, it is residually finite (\cite{hempel}, for example). 
Thus  the non-triviality of every non-trivial element of $G_{p,q}$ is detected by some homomorphism into some symmetric group. 
This detection is proposed to the author by Professor Nozaki. 
We list homomorphisms from $G_{p,q}$'s to some symmetric groups which map $[xy,yx]$'s to non-trivial elements in Table 1. 
Note that when $(p,q)=(1,1)$ or $(1,-1)$, the element $[xy,yx]$ is always the identity element in the presentation \eqref{anotherpresentation2}. 
It is known that $G_{1,1}$ admits no generalized torsion elements and that $G_{1,-1}$ admits a generalized torsion element. 
We impose on these homomorphisms the condition that $x$ is mapped to $[1,2,\dots,n,0]\in S_{n+1}$ and $n$ is less than ten. 
This imposition is only due to the limitations of the author's computer and his skill of programming. 
It may be true that except for $(p,q)=\pm(1,1)$, the group $G_{p,q}=\pi_{1}\left(K_{p,q}(0)\right)$ admits a generalized torsion element.

\subsection*{Notation for symmetric groups}
The symmetric group $S_{n+1}$ is a group consisting of bijections on a set $\{0,1,\dots, n\}$ for a non-negative integer $n$. 
By $[a_0,\dots,a_n]$, we represent a element of $S_{n+1}$ which maps $i$ to $a_{i}$. 
For two elements $a$ and $b$ of $S_{n+1}$, their product $ab$ is a bijection obtained by composing $a$ and $b$. 
In this paper, $b$ is carried out at first and $a$ is carried out at next in $ab$. 
For example, $[2,0,1]\cdot[1,0,2]=[0,2,1]$.

\begin{table}
\begin{center}
 \caption{A homomorphisms from $\pi_{1}\left(K_{p,q}(0)\right)$ to some symmetric group which sends $[xy,yx]$ to a non-trivial element}
\begin{tabular}{|c|c|c|c|} \hline  \label{tableqplus}
 $(p,q)$ & the symmetric group $S_{n+1}$ & the image of $x$ & the image of $y$ \\ \hline \hline
% $(1,1)$ &     $\times$  &   $ \times$   &    $\times $   \\ \hline
 $(2,1)$ & $S_{8}$ & $[1,2,3,4,5,6,7,0]$ & $[5,2,4,0,6,1,7,3]$ \\ \hline
 $(3,1)$ & $S_{7}$ & $[1,2,3,4,5,6,0]$ & $[3,0,5,2,6,4,1]$ \\ \hline
 $(4,1)$ & $S_{7}$ & $[1,2,3,4,5,6,0]$ & $[2,0,5,4,1,6,3]$ \\ \hline
 $(2,2)$ & $S_{6}$ & $[1,2,3,4,5,0]$ & $[1,3,0,4,5,2]$ \\ \hline
 $(5,1)$ & ? & ? & ? \\ \hline
 $(6,1)$ & $S_{10}$ & $[1,2,3,4,5,6,7,8,9,0]$ & $[6,4,1,7,3,0,8,9,2,5]$ \\ \hline
 $(3,2)$ & $S_{8}$  &  $[1,2,3,4,5,6,7,0]$  & $[3,5,0,1,6,7,2,4]$  \\ \hline
 $(7,1)$ & ? & ? & ? \\ \hline
 $(8,1)$ & $S_{10}$ & $[1,2,3,4,5,6,7,8,9,0]$ & $[3,0,8,2,6,4,1,9,7,5]$ \\ \hline
 $(4,2)$ & $S_{10}$ & $[1,2,3,4,5,6,7,8,9,0]$ & $[4,2,6,0,1,7,8,3,9,5]$ \\ \hline
 $(9,1)$ & $S_{7}$ & $[1,2,3,4,5,6,0]$ & $[3,0,5,2,6,4,1]$ \\ \hline
 $(3,3)$ & $S_{7}$ & $[1,2,3,4,5,6,0]$ & $[1,2,4,0,5,6,3]$ \\ \hline
 $(10,1)$ & $S_{7}$ & $[1,2,3,4,5,6,0]$ & $[2,0,5,4,1,6,3]$ \\ \hline
 $(5,2)$ & $S_{7}$ & $[1,2,3,4,5,6,0]$ & $[1,3,4,5,0,6,2]$ \\ \hline
 $(11,1)$ & ? & ? & ? \\ \hline
 $(12,1)$ & $S_{10}$ & $[1,2,3,4,5,6,7,8,9,0]$ & $[5,4,3,0,7,8,1,2,9,6]$ \\ \hline
 $(6,2)$ & $S_{6}$ & $[1,2,3,4,5,0]$ & $[1,3,0,4,5,2]$ \\ \hline
 $(4,3)$ & $S_{10}$ & $[1,2,3,4,5,6,7,8,9,0]$ & $[3,0,5,6,1,4,7,8,9,2]$ \\ \hline
 $(13,1)$ & ? & ? & ? \\ \hline
 $(14,1)$ & $S_{10}$ & $[1,2,3,4,5,6,7,8,9,0]$ & $[3,0,8,2,6,4,1,9,7,5]$ \\ \hline
 $(7,2)$ & $S_{10}$ & $[1,2,3,4,5,6,7,8,9,0]$ & $[3,8,6,5,1,4,0,9,7,2]$ \\ \hline
 $(15,1)$ & $S_{7}$ & $[1,2,3,4,5,6,0]$ & $[3,0,5,2,6,4,1]$ \\ \hline
 $(5,3)$ & $S_{7}$ & $[1,2,3,4,5,6,0]$ & $[3,4,5,1,6,0,2]$ \\ \hline
 $(16,1)$ & $S_{7}$ & $[1,2,3,4,5,6,0]$ & $[2,0,5,4,1,6,3]$ \\ \hline
 $(8,2)$ & $S_{7}$ & $[1,2,3,4,5,6,0]$ & $[1,3,4,5,0,6,2]$ \\ \hline
 $(4,4)$ & $S_{7}$ & $[1,2,3,4,5,6,0]$ & $[1,3,4,5,0,6,2]$ \\ \hline
 $(17,1)$ & ? & ? & ? \\ \hline
 $(18,1)$ & $S_{10}$ & $[1,2,3,4,5,6,7,8,9,0]$ & $[6,4,1,7,3,0,8,9,2,5]$ \\ \hline
 $(9,2)$ & $S_{8}$ & $[1,2,3,4,5,6,7,0]$ & $[3,5,0,1,6,7,2,4]$ \\ \hline
 $(6,3)$ & $S_{8}$ & $[1,2,3,4,5,6,7,0]$ & $[5,2,4,0,6,1,7,3]$ \\ \hline
 $(19,1)$ & ? & ? & ? \\ \hline
 $(20,1)$ & $S_{10}$ & $[1,2,3,4,5,6,7,8,9,0]$ & $[3,0,8,2,6,4,1,9,7,5]$ \\ \hline
 $(10,2)$ & $S_{6}$ & $[1,2,3,4,5,0]$ & $[1,3,0,4,5,2]$ \\ \hline
 $(21,1)$ & $S_{7}$ & $[1,2,3,4,5,6,0]$ & $[3,0,5,2,6,4,1]$ \\ \hline
 $(7,3)$ & $S_{7}$ & $[1,2,3,4,5,6,0]$ & $[3,4,5,1,6,0,2]$ \\ \hline
 $(22,1)$ & $S_{7}$ & $[1,2,3,4,5,6,0]$ & $[2,0,5,4,1,6,3]$ \\ \hline
 $(11,2)$ & $S_{7}$ & $[1,2,3,4,5,6,0]$ & $[1,3,4,5,0,6,2]$ \\ \hline
 $(23,1)$ & ? & ? & ? \\ \hline
 $(24,1)$ & ? & ? & ? \\ \hline
 $(12,2)$ & ? & ? & ? \\ \hline
 $(8,3)$ & ? & ? & ? \\ \hline
 $(6,4)$ & ? & ? & ? \\ \hline
 $(25,1)$ & $S_{8}$ & $[1,2,3,4,5,6,7,0]$ & $[2,0,6,5,3,1,7,4]$ \\ \hline
 $(5,5)$ & $S_{8}$ & $[1,2,3,4,5,6,7,0]$ & $[2,3,4,5,6,0,7,1]$ \\ \hline
 $(26,1)$ & $S_{8}$ & $[1,2,3,4,5,6,7,0]$ & $[5,2,4,0,6,1,7,3]$ \\ \hline
 $(13,2)$ & $S_{8}$ & $[1,2,3,4,5,6,7,0]$ & $[3,5,0,1,6,7,2,4]$ \\ \hline
 $(27,1)$ & $S_{7}$ & $[1,2,3,4,5,6,0]$ & $[3,0,5,2,6,4,1]$ \\ \hline
 $(9,3)$ & $S_{7}$ & $[1,2,3,4,5,6,0]$ & $[1,2,4,0,5,6,3]$ \\ \hline
% $(28,1)$ & $S_{7}$ & $[1,2,3,4,5,6,0]$ & $[2,0,5,4,1,6,3]$ \\ \hline
% $(14,2)$ & $S_{6}$ & $[1,2,3,4,5,0]$ & $[1,3,0,4,5,2]$ \\ \hline
% $(7,4)$ & $S_{7}$ & $[1,2,3,4,5,6,0]$ & $[1,3,4,5,0,6,2]$ \\ \hline
\end{tabular}
\end{center}
\end{table}

\end{document}